\numberwithin{equation}{section}
\theoremstyle{plain}
\newtheorem{thm}{Theorem}[section]
 \newtheorem{lemma}[thm]{Lemma}
\newtheorem*{thma}{Theorem A}
\newtheorem*{thmb}{Theorem B}
\theoremstyle{definition}
\newcommand{\dlabel}[1]{\ifmmode \text{\ttfamily \upshape [#1] } \else
{\ttfamily \upshape [#1] }\fi \label{#1}}
\newcommand{\gen}[1]{\left < #1 \right >}
\DeclareFontFamily{U}{wncy}{}
   \DeclareFontShape{U}{wncy}{m}{n}{<->wncyr10}{}
   \DeclareSymbolFont{mcy}{U}{wncy}{m}{n}
   \DeclareMathSymbol{\Sh}{\mathord}{mcy}{"58}
\begin{document}

\title{On $\Sh$-rigidity of groups of order $p^6$}

\author{Pradeep K. Rai}
\address{School of Mathematics, Harish-Chandra Research Institute, Chhatnag Road, Jhunsi, Allahabad 211019, INDIA.}
\email{pradeeprai@hri.res.in}

\author{Manoj K. Yadav}
\address{School of Mathematics, Harish-Chandra Research Institute, Chhatnag Road, Jhunsi, Allahabad 211019, INDIA.}
\email{myadav@hri.res.in}

\subjclass[2010]{20D45, 20D15}
\keywords{class-preserving automorphism, $\Sh$-rigid group}

\begin{abstract}
Let $G$ be a group and $Out_c(G)$ be the group of its class-preserving outer automorphisms. We compute $|Out_c(G)|$ for all the group $G$ of order $p^6$, where $p$ is an odd prime. 
As an application, we observe that if $G$ is a $\Sh$-rigid group of order $p^6$, then it's Bogomolov multiplier $B_0(G)$ is zero. 
\end{abstract}

\maketitle

\section{Introduction}

\noindent Let $G$ be a group which acts on itself by conjugation, and let $H^1(G, G)$ be the first cohomology pointed set. Denote by $\Sh(G)$ the subset of $H^1(G, G)$
consisting of the cohomology classes becoming trivial after restricting to every cyclic subgroup of G. The set $\Sh(G)$, for a given group $G$, is called the Shafarevich-Tate set of $G$.
Following Kunyavski\u{\i} \cite{BK}, we say that $G$ is a $\Sh$-rigid  group if the set $\Sh(G)$ consists of one element. Throughout the paper $\Sh$-rigid  groups will be called rigid groups. 
The Bogomolov multiplier $B_0(G)$ of a finite group G is defined as the subgroup of the Schur multiplier consisting of the cohomology classes vanishing after
restriction to all abelian subgroups of G. See \cite{BK} for further details on rigidity and  Bogomolov multiplier of certain groups, and eventual ties between them. Kang and Kunyavski\u{\i} in \cite{KK} observed that $B_0(G) = 0$ for most of the known classes of finite $\Sh$-rigid groups $G$, and asked the following question:
\vspace{.05in}

\noindent {\bf Question}(\cite[Question 3.2]{KK}). Let $G$ be a finite  $\Sh$-rigid group.  Is it true that $B_0(G) = 0$?

\vspace{.1in}

This question has an affirmative answer for all $p$-groups of order at most $p^5$. For $p$-groups of order $\le p^4$, it is proved in \cite{KK}, and for groups of order $p^5$, $p > 3$, it follows from \cite{PM} and \cite{MKY}. For $p = 2, 3$, it can be easily verified using GAP \cite{GAP}. 
However this question does not have a positive solution in general (negative answer for a group of order $256$ is provided in the paper \cite{KK} itself), it is interesting to explore for which classes of groups it has a positive solution. This paper is devoted to  studying the  rigidity property of groups of order $p^6$ for an odd prime $p$ and answering the above question in affirmative for the class of groups considered here. Our viewpoint on this study is a bit different. We study rigidity problem through automorphisms of groups. We make it more precise here. For a finite group $G$, an automorphism of $G$ is called (conjugacy) class-preserving if it maps each  element of $G$ to  some conjugate of it. The set of all class-preserving automorphisms of $G$, denoted by $Aut_c(G)$, forms a normal subgroup of $Aut(G)$ and contains $Inn(G)$, the group of all inner automorphisms of $G$. Let $Out_c(G)$ denote the quotient group $Aut_c(G)/Inn(G)$. Then there is a bijection between $Out_c(G)$ and $\Sh(G)$ for any finite group $G$ \cite[2.12]{Ono}. Therefore a finite group $G$ is $\Sh$-rigid if and only if $Out_c(G) = 1$.  Both approaches, i.e., through the Shafarevich-Tate sets (cf.  \cite{KV1, OW1, OW2}) as well as $Out_c(G)$ (cf. \cite{FtSt, HL, MV2, MV3, MKY}), have been explored by many mathematicians studying the rigidity of various classes of groups. As mentioned above, in the present paper we take the latter approach. 

Groups of order $p^6$, $p$ an odd prime, are classified in $43$ isoclinism families  by R. James  \cite{RJ}, which are denoted by $\Phi_k$ for $1 \le k \le 43$.  The concept of isoclinism was introduced by P. Hall \cite{PH}. That $Aut_c(G)$, for a non-abelian finite group $G$,  is independent (upto isomorphism) of the choice of a group in a given isoclinism family  is shown in \cite[Theorem 4.1]{MKY}. This result allows us to select and work with any group from each of $43$ isoclinism families of groups of order $p^6$. The following is the main result of this paper, which classifies rigid groups of order $p^6$, $p$ being an odd prime, and size of $| \Sh(G)| = |Out_c(G)|$ is computed for the groups $G$ which are not rigid.

\begin{thma}
 Let $G$ be a group of order $p^6$ for an odd prime $p$. Then $Out_c(G) \neq 1$ if and only if $G$ belongs to one of the isoclinism families 
 $\Phi_k$ for $k = 7, 10, 13, 15, 18, 20, 21, 24, 30, 36, 38, 39$. Moreover,
 \begin{enumerate}
 \item  if $G$ belongs to one of the isoclinism families $\Phi_k$ for $k = 7, 10, 24, 30, 36, 38, 39$, then $|Out_c(G)|$ = $p$,  
 \item  if $G$ belongs to one of the isoclinism families $\Phi_k$ for $k = 13, 18, 20$, then $|Out_c(G)|$ = $p^2$, and
 \item  if $G$ belongs to one of the isoclinism families $\Phi_k$ for $k = 15, 21$, then $|Out_c(G)| = p^4$.
 \end{enumerate}
\end{thma}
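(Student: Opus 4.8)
\section*{Proof strategy for Theorem A}

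The plan is to exploit the isoclinism invariance of $Aut_c(G)$ (Theorem 4.1 of \cite{MKY}) to reduce the problem to a single explicit representative from each of James's $43$ families $\Phi_k$. For each such representative $G$, given by generators and relations in \cite{RJ}, the goal is to compute $|Aut_c(G)|$ and then read off $|Out_c(G)| = |Aut_c(G)|/|G : Z(G)|$, using $Inn(G) \cong G/Z(G)$ and $Inn(G) \le Aut_c(G)$. Freedom in the choice of representative lets one pick the presentation in which the ensuing computation is cleanest.

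The basic device is the following. Let $\{a_1,\dots,a_d\}$ be a minimal generating set of $G$. Each $\phi \in Aut_c(G)$ is determined by its values $\phi(a_i)$, and each $\phi(a_i)$ must lie in the conjugacy class $a_i^{G}$. Conversely, an assignment $a_i \mapsto a_i^{g_i}$ extends to an endomorphism exactly when it respects the defining relations; and since $a_i^{g_i} = a_i[a_i,g_i] \equiv a_i \pmod{G'}$, such an endomorphism induces the identity on $G/\Phi(G)$ and is therefore an automorphism by the Burnside basis theorem. Hence $Aut_c(G)$ is in bijection with the set of relation-respecting tuples $(a_1^{g_1},\dots,a_d^{g_d})$, and the whole problem becomes one of counting solutions of a system of congruences in $G$. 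Note also that every $\phi \in Aut_c(G)$ fixes $Z(G)$ pointwise and acts trivially on $G/G'$, so only the ``middle'' layers of $G$ contribute.

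For the families of nilpotency class $2$ this system linearizes: writing $\phi(x) = x\,f(x)$ with $f(x)\in G'\le Z(G)$, one checks that $f$ is a homomorphism $G/Z(G) \to G'$ subject to the pointwise constraint $f(\bar x)\in [x,G]$, the latter being a subgroup of $G'$ in class $2$; the inner automorphisms correspond to the homomorphisms $x\mapsto[x,g]$. Thus $Out_c(G)$ is the quotient of this constrained $\Hom$-group by its inner subgroup, and $|Out_c(G)|$ is obtained by a straightforward count reducing to linear algebra over the relevant elementary abelian or cyclic sections. For the higher-class families this clean description breaks down, since $G'$ is no longer central, $[x,G]$ need not be a subgroup, and the effect of the conjugators $g_i$ on the iterated commutators occurring in the relations must be tracked term by term.

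The bulk of the argument is then an inspection of all $43$ families. The abelian family and those in which the relations force every valid tuple to be inner give $Out_c(G)=1$ at once; this should dispose of the $31$ unlisted families. For the twelve exceptional families one must both (i) exhibit an explicit non-inner class-preserving automorphism, establishing $Out_c(G)\neq 1$, and (ii) count the valid tuples exactly to pin $|Out_c(G)|$ down as $p$, $p^2$, or $p^4$. I expect the main obstacle to lie in step (ii) for the families of higher nilpotency class, and in particular for $\Phi_{15}$ and $\Phi_{21}$ where the answer is $p^4$: there the constraints interact nontrivially with the lower central series, so confirming that a proposed assignment preserves \emph{all} relations, and that no automorphisms exist beyond the ones exhibited, demands careful bookkeeping rather than a single structural argument. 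Throughout, small-prime instances can be cross-checked independently in GAP \cite{GAP}.
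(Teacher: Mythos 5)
Your overall architecture (reduce to one representative per isoclinism family via \cite[Theorem 4.1]{MKY}, bound $|Aut_c(G)|$ by conjugacy class sizes of a minimal generating set, then count) matches the paper. But there is a genuine gap at the heart of your enumeration principle: you assert that $Aut_c(G)$ is \emph{in bijection} with the relation-respecting tuples $(a_1^{g_1},\dots,a_d^{g_d})$. That map is only injective, which is exactly why the paper's Lemma \ref{lem6} is an inequality $|Aut_c(G)| \leq \prod_i |x_i^G|$ and not an equality. An automorphism that sends each member of a minimal generating set to a conjugate need not send an arbitrary element $\eta\prod_i \alpha_i^{k_i}$ to a conjugate of itself, so counting relation-respecting tuples can strictly overestimate $|Aut_c(G)|$. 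This failure is not hypothetical within the scope of Theorem A: for $G = \Phi_{17}(1^6)$ the paper exhibits the map $\alpha \mapsto \alpha$, $\alpha_1 \mapsto \alpha_1$, $\beta \mapsto \beta\gamma = \alpha_1^{-1}\beta\alpha_1$, which extends to a genuine (central) automorphism sending every generator to a conjugate, yet is shown \emph{not} to be class-preserving because no conjugator works for the element $\alpha\beta$. Your tuple count would therefore report $|Aut_c(\Phi_{17}(1^6))| > |Inn(G)|$ and misclassify $\Phi_{17}$ as non-rigid; the same danger arises in the paper's treatment of $\Phi_{19}$, $\Phi_{23}$, $\Phi_{27}$, $\Phi_{31}$--$\Phi_{34}$, $\Phi_{38}$ and $\Phi_{40}$--$\Phi_{43}$, where the decisive step is precisely to take a relation-respecting conjugate-tuple and kill it by testing class-preservation on a well-chosen \emph{product} of generators (or on a commutator such as $\delta(\alpha_2)$ in $\Phi_{38}$). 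So the missing idea is the quantifier over all elements: the condition must be checked for every $\eta_1\prod_i\alpha_i^{k_i}$, which is where the bulk of the paper's case-by-case work actually lies, both for ruling automorphisms out and (as in $\Phi_{30}$, $\Phi_{39}$, $\Phi_{18}$, $\Phi_{20}$) for verifying that an exhibited map really is class-preserving.

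Two smaller remarks. Your class-$2$ linearization is stated correctly (there you do impose the pointwise constraint $f(\bar x)\in[x,G]$ for all $x$, and $[x,G]$ is indeed a subgroup in class $2$), so that part is sound and would in principle handle $\Phi_{11}$--$\Phi_{15}$ by direct counting; the paper instead disposes of $\Phi_{15}$ by a slicker elimination argument identifying it with Burnside's group $W$ with $|Aut_c(W)| = p^8$ among the five class-$2$ families, and it quotes \cite[Proposition 5.8]{MKYLMS} for $\Phi_{21}$ rather than performing the ``careful bookkeeping'' you anticipate. Also note the paper does not compute all $31$ rigid families by raw counting: several are dispatched structurally (direct products with $C_p$ via \cite{MKY}, abelian-by-cyclic groups via Lemma \ref{lem4}, cyclic commutator subgroup via \cite[Corollary 3.6]{MKY}), which both shortens the work and avoids exactly the overcounting pitfall described above.
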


Proof of Theorem A follows from Theorem \ref{prop1} and Theorem \ref{prop2}. Using GAP \cite{GAP} it can be verified that the above question has an affirmative answer for groups of order $2^6$ and $3^6$ (Bogomolov multiplier of groups of order $2^6$ is also computed in \cite{CHKK}). With this information and results of Chen and Ma \cite[Theorem 1.4]{CM}, Theorem A provides an affirmative answer of the above mentioned question of Kang and Kunyavski\u{\i} for groups of order $p^6$ for all primes $p$ in the following result.

\begin{thmb}
 Let $G$ be a $\Sh$-rigid group of order $p^6$ for a prime $p$. Then its Bogomolov multiplier $B_0(G)$ is zero.
\end{thmb}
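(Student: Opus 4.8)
The plan is to prove Theorem B by combining Theorem A with the existing classification of Bogomolov multipliers for groups of order $p^6$, reducing everything to a finite check over the isoclinism families. The key observation is that Theorem B is really a statement about which families can contain a rigid group, so I would first use Theorem A to identify exactly those families, and then invoke the known computation of $B_0(G)$ on precisely those families.

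First I would handle the odd prime case. By Theorem A, a group $G$ of order $p^6$ with $p$ odd is $\Sh$-rigid (i.e. $Out_c(G) = 1$) if and only if $G$ does \emph{not} belong to any of the families $\Phi_k$ for $k \in \{7, 10, 13, 15, 18, 20, 21, 24, 30, 36, 38, 39\}$. Since both the property $Out_c(G) = 1$ and the vanishing of $B_0(G)$ depend only on the isoclinism family (the former by \cite[Theorem 4.1]{MKY}, the latter being an isoclinism invariant by the general theory referenced via \cite{BK, KK}), it suffices to show that $B_0(G) = 0$ for every group $G$ lying in one of the \emph{remaining} families. Here I would appeal directly to the classification of Chen and Ma \cite[Theorem 1.4]{CM}, which determines exactly those isoclinism families $\Phi_k$ of order $p^6$ with $B_0(G) \neq 0$. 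The crux of the argument is then the purely combinatorial verification that the list of families with nontrivial Bogomolov multiplier from \cite{CM} is \emph{disjoint} from the complement of the rigidity list of Theorem A; equivalently, every family whose groups have $B_0(G) \neq 0$ already appears among the twelve non-rigid families of Theorem A. This comparison of two explicit finite lists is the heart of the proof, and no genuine computation beyond cross-referencing the two tables is required.

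Next I would dispose of the small primes $p = 2$ and $p = 3$ separately, since Theorem A is stated only for odd $p$ and the isoclinism classification of James \cite{RJ} is likewise for odd primes. For these two primes the plan is to verify the statement directly by machine: using GAP \cite{GAP} one enumerates all groups of order $2^6 = 64$ and $3^6 = 729$, computes $Out_c(G)$ for each (flagging the rigid ones), and then computes $B_0(G)$ for each rigid group, confirming it vanishes. For $p=2$ the Bogomolov multipliers are already tabulated in \cite{CHKK}, so for that prime I would simply cross-reference that table against the GAP computation of rigidity, while for $p=3$ the full GAP check is self-contained.

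The main obstacle I anticipate is not in the logical structure, which is essentially a matching of finite lists, but in ensuring that the two external classifications are \emph{compatible in their indexing and hypotheses}. Specifically, one must check that the families in \cite[Theorem 1.4]{CM} with nonzero $B_0$ are labelled by the same James invariants $\Phi_k$ used in Theorem A, and that any low-prime exceptions (small values of $p$, or families whose behaviour depends on congruence conditions on $p$) are correctly accounted for; a mismatch in conventions between \cite{CM} and \cite{RJ} could otherwise create a spurious counterexample. Once the indexing is reconciled, the implication is immediate: every rigid $G$ lands outside the nonvanishing list, so $B_0(G) = 0$.
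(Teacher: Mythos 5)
Your proposal is correct and follows essentially the same route as the paper: the authors likewise deduce Theorem B by cross-referencing the twelve non-rigid families of Theorem A against the Bogomolov multiplier computations of Chen and Ma \cite[Theorem 1.4]{CM} for odd primes, and dispose of $p = 2, 3$ by direct GAP \cite{GAP} verification (with \cite{CHKK} available for order $2^6$). Your added care about reconciling the James \cite{RJ} family labels between the two sources is a sensible precaution but does not change the argument.
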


Theorem A is also interesting from other point of view.  In 1911 W. Burnside \cite{WB1} asked the following question: Does there exist
any finite group $G$ such that $G$ has a non-inner class-preserving automorphism? In 1913, he himself answered this question affirmatively by constructing a group $W$ of order $p^6$, $p$ an odd prime \cite{WB}. 
This group $W$ is of nipotency class 2 such that $|Aut_c(W)| = p^8$ and $|Out_c(W)| = p^4$. More groups with non-inner class-preserving automorphism were constructed in 
\cite{BM, HEIN, HW, IM, FS}. We refer the reader to \cite{MKYSRV} for a more comprehensive survey on the topic. So Theorem A above classifies all groups of order $p^6$, $p$ an odd prime, having non-inner class-preserving automorphisms. \\

We use the following notations. For a multiplicatively written group $G$ let $x,y,a \in G$. Then $[x,y]$ denotes the commutator $x^{-1}y^{-1}xy$ and $x^a$ denotes the conjugate of $x$ by $a$ i.e. $a^{-1}xa$. 
By $\gen{x}$ we denote the cyclic subgroup of $G$ generated by $x$. By $Z(G)$ and $Z_2(G)$ we denote the center and the second center of $G$ respectively.  For $x \in G$, $C_G(x)$ denotes the centralizer  of $x$ in $G$.  By $C_G(H)$ we denote the centralizer of $H$ in $G$, where $H$ is a subgroup of $G$. We write 
$\gamma_2(G)$ for the commutator subgroup of $G$. The group of all homomorphisms from a group $H$ to an abelian group $K$ is denoted by $Hom(H, K)$. For $x \in G, \ x^G$ denotes 
the $G$-conjugacy class of $x$ and $[x, G]$ denotes the set of all $[x,y]$, for $y \in G$. By $C_p$ we denote the cyclic group of order $p$. The subgroup generated by all elements of order $p$ in $Z(G)$ is denoted by $\Omega_1(Z(G))$. \\

 \section{ \bf{Preliminaries}}
 
 The following commutator identities are easy to prove.
 
  \begin{lemma}\label{lem1}
   Let $G$ be a group and $x,y,z \in G$. Then 
   \begin{enumerate}
    \item $[xy, z] = [x, z]^y[y, z]$, and
    \item $[z, xy] = [z, y][z, x]^y$.
   \end{enumerate}
\end{lemma}

An automorphism of a group $G$ is called central if it acts trivially on the central quotient group $G/Z(G)$, or equivalently, if it commutes with all the inner automorphisms of $G$.
The set of all central automorphisms of $G$ forms a normal subgroup of $Aut(G)$. We denote this subgroup by $Autcent(G)$.  The following lemma follows from \cite{AY}.
  
  \begin{lemma}\label{lem2}
   Let $G$ be a purely non-abelian finite $p$-group. Then $|Autcent(G)|$ = $|Hom(G/\gamma_2(G),$ $Z(G))|$.
  \end{lemma}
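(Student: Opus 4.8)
The statement is the theorem of Adney and Yen, and the plan is to exhibit an explicit bijection between $\Autcent(G)$ and $\Hom(G/\gamma_2(G), Z(G))$. First I would associate to each central automorphism $\alpha$ the map $f_\alpha \colon G \to Z(G)$, $f_\alpha(x) = x^{-1}\alpha(x)$; centrality of $\alpha$ guarantees $f_\alpha(x) \in Z(G)$. Using Lemma \ref{lem1} together with the fact that $f_\alpha(x)$ is central, one checks $f_\alpha(xy) = f_\alpha(x)f_\alpha(y)$, so $f_\alpha$ is a homomorphism into the abelian group $Z(G)$ and hence factors through $G/\gamma_2(G)$. The assignment $\alpha \mapsto f_\alpha$ is injective, since $f_\alpha$ determines $\alpha$ via $\alpha(x) = x\,f_\alpha(x)$. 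This already gives $|\Autcent(G)| \le |\Hom(G/\gamma_2(G), Z(G))|$, and the whole content of the lemma is the reverse inequality, i.e.\ surjectivity.

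For surjectivity, given $f \in \Hom(G/\gamma_2(G), Z(G))$ I would define $\alpha_f(x) = x\,f(x)$ and verify it is a central endomorphism (the homomorphism property again uses that $f(x)$ is central). The only thing to prove is that $\alpha_f$ is bijective, and since $G$ is finite it suffices to show $\alpha_f$ is injective. If $\alpha_f(x) = 1$ then $x = f(x)^{-1} \in Z(G)$, so $\Ker \alpha_f \subseteq Z(G)$; hence $\alpha_f \in \Aut(G)$ if and only if the endomorphism $z \mapsto z\,f(z)$ of the finite abelian group $Z(G)$ is injective, equivalently an automorphism of $Z(G)$. Reducing modulo the Frattini subgroup, this holds if and only if the induced $\mathbb{F}_p$-linear map $\Id + \bar f$ on $Z(G)/Z(G)^p$ is nonsingular, i.e.\ the map $\bar f$ induced on the Frattini quotient of $Z(G)$ by $f|_{Z(G)}$ does not have $-1$ as an eigenvalue.

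The main obstacle is to rule this out using the hypothesis that $G$ is purely non-abelian. Here I would use the characterization that $G$ is purely non-abelian if and only if every central element of order $p$ lies in the Frattini subgroup $\Phi(G) = \gamma_2(G)G^p$, equivalently $\Omega_1(Z(G)) \le \Phi(G)$; this holds because a central subgroup of order $p$ generated by $c$ is a direct factor of $G$ precisely when $c \notin \Phi(G)$ (a retraction $G \to \langle c \rangle$ then exists), and any nontrivial abelian direct factor supplies such a $c$. The delicate point is that a kernel element of $z \mapsto z\,f(z)$, or a $-1$-eigenvector of $\bar f$, produces only an element of order $p$ in $Z(G)$ which a priori may lie inside $\Phi(G)$, so it does not immediately contradict pure non-abelianness; following Adney and Yen, one must exploit the constraint that $f$ is defined on all of $G$ (so that $f|_{Z(G)}$ is the composite $Z(G) \hookrightarrow G/\gamma_2(G) \xrightarrow{f} Z(G)$) to relocate the offending element to one of $Z(G)$ lying outside $\Phi(G)$, thereby producing a genuine abelian direct factor and the desired contradiction. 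Once surjectivity is established, the bijection gives $|\Autcent(G)| = |\Hom(G/\gamma_2(G), Z(G))|$, as claimed.
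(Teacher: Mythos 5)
The paper does not actually prove this lemma: it is imported wholesale from Adney and Yen \cite{AY} (``The following lemma follows from \cite{AY}''), so your proposal has to be measured against Adney--Yen's Theorem~1. Your framework is the correct one and matches theirs: the correspondence $\alpha \mapsto f_\alpha$ with $f_\alpha(x) = x^{-1}\alpha(x)$, its injectivity, the observation that $\Ker \alpha_f \le Z(G)$ so that $\alpha_f \in \Aut(G)$ if and only if $z \mapsto z f(z)$ is injective on $Z(G)$, and the reduction to nonsingularity of $\Id + \bar f$ on $Z(G)/Z(G)^p$ are all sound. But at exactly the point where the theorem has content you stop proving and start describing: ``following Adney and Yen, one must exploit the constraint that $f$ is defined on all of $G$ \ldots to relocate the offending element \ldots thereby producing a genuine abelian direct factor.'' That relocation step \emph{is} Adney--Yen's Theorem~1; it occupies the substance of their paper and is nowhere carried out in your proposal. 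As written, you have established only the easy inequality $|\Autcent(G)| \le |\Hom(G/\gamma_2(G), Z(G))|$, together with an accurate diagnosis of what remains.

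Worse, the tool you nominate for the endgame is defective. The asserted equivalence ``$G$ is purely non-abelian if and only if $\Omega_1(Z(G)) \le \Phi(G)$'' is false: only the forward implication holds. Take $G = E \times C_{p^2}$ with $E$ extraspecial of order $p^3$. Then $G$ has an abelian direct factor, yet $\Omega_1(Z(G)) = Z(E) \times \Omega_1(C_{p^2}) \le \Phi(E) \times \Phi(C_{p^2}) = \Phi(G)$, since $Z(E) = \gamma_2(E)$ and $\Omega_1(C_{p^2}) = (C_{p^2})^p$. In particular your parenthetical claim that ``any nontrivial abelian direct factor supplies such a $c$'' fails whenever the abelian factor has exponent greater than $p$. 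This is not a cosmetic slip: it shows that the contradiction you aim for cannot in general terminate in a $C_p$ direct factor detected by the criterion $c \notin \Phi(G)$. A $-1$-eigenvector forces only an order-$p$ central element that may well sit inside $\Phi(G)$, and the abelian direct factor that pure non-abelianness forbids may be cyclic of order $p^k$ with $k \ge 2$; splitting such a factor off requires the height-of-kernel-elements and retraction analysis that Adney and Yen actually perform, for which the $c \notin \Phi(G)$ criterion for $C_p$ factors is insufficient. So the gap is genuine: the one step your sketch defers is the theorem, and the mechanism you propose for closing it would fail.
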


  \begin{lemma}[\cite{MKY}, Lemma 2.2]\label{lem3}
  Let $G$ be a finite $p$-group such that $Z(G) \leq [x, G]$ for all $x \in G-\gamma_2(G)$. Then $|Aut_c(G)| \geq |Autcent(G)||G/Z_2(G)|$.
  \end{lemma}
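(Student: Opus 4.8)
The plan is to exhibit inside $Aut_c(G)$ a subgroup of the required size, namely the product $Autcent(G)\cdot Inn(G)$, and then to compute its order via the product formula $|AB| = |A|\,|B|/|A\cap B|$. The essential step will be to show that, under the stated hypothesis, every central automorphism is already class-preserving, i.e. $Autcent(G)\subseteq Aut_c(G)$.

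To establish this containment, I would start from the standard form of a central automorphism $\phi$, writing $\phi(x)=xf(x)$ with $f(x)\in Z(G)$; a short computation (using that the values of $f$ are central) shows that $f\colon G\to Z(G)$ is a homomorphism, and since $Z(G)$ is abelian this forces $f(\gamma_2(G))=1$. Now for any $x\in G$ one has $x^G=x[x,G]$, so $\phi$ is class-preserving precisely when $f(x)\in[x,G]$ for every $x$. I would verify this in two cases: for $x\in\gamma_2(G)$ it is automatic because $f(x)=1\in[x,G]$; for $x\in G-\gamma_2(G)$ the hypothesis $Z(G)\leq[x,G]$ together with $f(x)\in Z(G)$ gives $f(x)\in[x,G]$. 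Hence $\phi\in Aut_c(G)$, which proves $Autcent(G)\subseteq Aut_c(G)$.

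Since $Inn(G)\subseteq Aut_c(G)$ always and $Autcent(G)$ is normal in $Aut(G)$ (so it is normalized by $Inn(G)$), the product $Autcent(G)\cdot Inn(G)$ is a subgroup of $Aut_c(G)$. It then remains to identify the intersection: an inner automorphism $x\mapsto x[x,a]$ is central exactly when $[x,a]\in Z(G)$ for all $x$, that is, when $a\in Z_2(G)$, whence $Autcent(G)\cap Inn(G)\cong Z_2(G)/Z(G)$. Using $Inn(G)\cong G/Z(G)$, the product formula gives
\[
|Autcent(G)\cdot Inn(G)|=\frac{|Autcent(G)|\,|G/Z(G)|}{|Z_2(G)/Z(G)|}=|Autcent(G)|\,|G/Z_2(G)|,
\]
and as this is the order of a subgroup of $Aut_c(G)$, the inequality $|Aut_c(G)|\geq|Autcent(G)|\,|G/Z_2(G)|$ follows.

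The only delicate point will be the containment $Autcent(G)\subseteq Aut_c(G)$, and specifically checking the class-preserving condition on all of $G$: the case $x\in\gamma_2(G)$, where the hypothesis says nothing about $[x,G]$, is handled instead by the observation that the homomorphism $f$ vanishes on $\gamma_2(G)$. Everything else is a routine application of the product formula together with the standard identifications $Inn(G)\cong G/Z(G)$ and $Autcent(G)\cap Inn(G)\cong Z_2(G)/Z(G)$.
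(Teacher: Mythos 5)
Your proof is correct and follows essentially the same route as the cited source \cite[Lemma 2.2]{MKY}: one shows that the homomorphism $f(x) = x^{-1}\phi(x)$ attached to a central automorphism vanishes on $\gamma_2(G)$, so the hypothesis $Z(G) \leq [x,G]$ for $x \in G - \gamma_2(G)$ forces $Autcent(G) \subseteq Aut_c(G)$, and then the bound follows from $|Autcent(G)\,Inn(G)|$ computed via $Autcent(G) \cap Inn(G) \cong Z_2(G)/Z(G)$. Your handling of the case $x \in \gamma_2(G)$ through $f(\gamma_2(G)) = 1$ is exactly the right (and the original) device.
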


 \begin{lemma}[\cite{HW}, Proposition 14.4]\label{lem4}
 Let $G$ be a finite group and $H$ be an abelian normal subgroup of $G$ such that $G/H$ is cyclic. Then $Out_c(G) = 1$. 
\end{lemma}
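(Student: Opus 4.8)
The plan is to show that every class-preserving automorphism of such a $G$ is inner, so that $Out_c(G) = 1$. Fix an abelian normal subgroup $H \normaleq G$ with $G/H$ cyclic, say $G/H = \gen{gH}$, so that $G = \gen{H, g}$, and write $\sigma$ for the automorphism of $H$ given by conjugation, $\sigma(h) = h^{g} = g^{-1}hg$. Since $G$ is finite, $\sigma$ has finite order, say $d$, and $H$ becomes a module over the cyclic group $\gen{\sigma}$ of order $d$. The first thing I would record is the shape of a class-preserving automorphism on $H$: because $H$ is abelian and normal with $G/H = \gen{gH}$, any $a \in G$ can be written $a = h'g^{j}$ with $h' \in H$, and then for $h \in H$ we get $h^{a} = h^{g^{j}} = \sigma^{j}(h)$, the factor $h'$ being irrelevant as $H$ is abelian. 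Hence, if $\phi \in Aut_c(G)$, then for each $h \in H$ there is an integer $i(h)$ with $\phi(h) = \sigma^{i(h)}(h)$; in particular $\phi(H) \subseteq H$, and so $\phi(H) = H$ by finiteness.

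Next comes a normalization. Since $Inn(G) \le Aut_c(G)$ and the latter is a subgroup of $Aut(G)$, I may replace $\phi$ by a composite with a suitable inner automorphism without affecting either the class-preserving property or the question of whether $\phi$ is inner. Writing $\phi(g) = g^{b}$ and passing to the map $x \mapsto \phi(x)^{b^{-1}}$, I reduce to the case $\phi(g) = g$. With this normalization $\phi$ commutes with $\sigma$ on $H$: indeed $\phi(\sigma(h)) = \phi(g^{-1}hg) = g^{-1}\phi(h)g = \sigma(\phi(h))$. Thus $\phi$ restricts to a $\gen{\sigma}$-module automorphism of $H$ enjoying the pointwise property $\phi(h) \in \{\sigma^{j}(h) : j \in \mathbb{Z}\}$ for every $h$.

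The heart of the argument is to upgrade this pointwise statement to a single global exponent, that is, to produce one integer $k$ with $\phi(h) = \sigma^{k}(h)$ for all $h \in H$. I would do this in two stages. On the cyclic submodule $H_{h} = \gen{\sigma^{j}(h) : j \in \mathbb{Z}}$ generated by one element, $\phi$ agrees with $\sigma^{i(h)}$ on the generator $h$ and commutes with $\sigma$, hence $\phi = \sigma^{i(h)}$ on all of $H_{h}$; so $\phi$ is \emph{locally} a power of $\sigma$, with exponent $i(h)$ well-defined modulo the $\sigma$-period $d_{h}$ of $h$ (the least $j>0$ with $\sigma^{j}(h)=h$). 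It then remains to glue these local exponents into one. Here the decisive leverage is the class-preserving condition applied to products: for $h_{1},h_{2}\in H$ one has $\sigma^{i(h_{1})}(h_{1})\,\sigma^{i(h_{2})}(h_{2}) = \phi(h_{1}h_{2}) = \sigma^{i(h_{1}h_{2})}(h_{1}h_{2})$, and the mere existence of a single exponent $i(h_{1}h_{2})$ realizing the right-hand side forces $i(h_{1})$ and $i(h_{2})$ to be congruent modulo $\gcd(d_{h_{1}},d_{h_{2}})$. Since $H$ is finite, the pairwise-compatible system of congruences $x \equiv i(h) \pmod{d_{h}}$ then admits a simultaneous solution $k$ by the Chinese Remainder Theorem. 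Finally $\phi$ and the inner automorphism $x\mapsto x^{g^{k}}$ agree on $H$ and both fix $g$, hence agree on all of $G = \gen{H,g}$; thus $\phi$ is inner and $Out_c(G)=1$.

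The main obstacle is exactly this gluing step, i.e. verifying that the product identity really does force compatibility of the local exponents modulo $\gcd(d_{h_{1}},d_{h_{2}})$; everything before it is bookkeeping. For the application in this paper the gluing simplifies considerably, since there $G$ is a $p$-group, so $\sigma$ has $p$-power order and the periods $d_{h}$ are powers of $p$ and therefore totally ordered by divisibility. In that case $\{h : \sigma^{d/p}(h)=h\}$ is a proper subgroup of $H$, so some $h^{*}$ attains the full period $d_{h^{*}}=d$; testing the product condition against such an $h^{*}$ pins down a single exponent $k$ directly and bypasses the general Chinese-Remainder bookkeeping.
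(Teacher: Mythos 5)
Your reductions are all sound: conjugates of $h \in H$ are exactly the elements $\sigma^j(h)$; normalizing so that $\phi(g)=g$ by composing with an inner automorphism is harmless; the normalized $\phi$ then commutes with $\sigma$ on $H$; and once a single exponent $k$ with $\phi|_H=\sigma^k$ is found, $\phi$ agrees with conjugation by $g^k$ on $H\cup\{g\}$ and hence on $G$. The CRT step is also fine \emph{granted} pairwise compatibility. But the step you yourself flag as the main obstacle --- that the identity $\sigma^{i(h_1)}(h_1)\,\sigma^{i(h_2)}(h_2)=\sigma^{m}(h_1h_2)$ forces $i(h_1)\equiv i(h_2)\pmod{\gcd(d_{h_1},d_{h_2})}$ --- is not merely unverified; as a bare implication it is false. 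Writing $H$ additively, the identity says only that $(\sigma^{i(h_1)}-\sigma^{m})h_1=(\sigma^{m}-\sigma^{i(h_2)})h_2$: the two error terms must cancel, not vanish. Concretely, take $H=(\mathbb{F}_9,+)$ with $\sigma$ equal to multiplication by a generator $\zeta$ of $\mathbb{F}_9^{\times}$ (so $\sigma$ has order $8$ and $d_h=8$ for every $h\neq 0$); this arises as conjugation in the abelian-by-cyclic group $G=\mathbb{F}_9^{+}\rtimes\langle\zeta\rangle$, to which the lemma applies. With $\mathbb{F}_9=\mathbb{F}_3[i]$, $i^2=-1$, $\zeta=1+i$, and $h_1=1$, $h_2=\zeta$, one computes $\zeta^2=-i$, $\zeta^4=-1$, and hence $\sigma^0(h_1)+\sigma^1(h_2)=1+\zeta^2=1-i=\zeta^4(1+\zeta)=\sigma^4(h_1h_2)$. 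So your displayed equation holds with exponents $0$ at $h_1$, $1$ at $h_2$, and $m=4$ at $h_1h_2$, even though $0\not\equiv 1\pmod 8=\gcd(d_{h_1},d_{h_2})$. (Indeed, since $\zeta$ generates $\mathbb{F}_9^{\times}$, \emph{any} nonzero value of $\sigma^a(h_1)+\sigma^b(h_2)$ is of the form $\sigma^m(h_1+h_2)$, so in this module the product relation carries no congruence information at all.)

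Consequently the gluing cannot be extracted from products inside $H$ alone, and the same cancellation problem defeats your proposed $p$-group shortcut: testing against a single $h^{*}$ of full period again yields an equation in which the two error terms may cancel rather than vanish, so it does not ``pin down'' $k$ without further argument. Note that the paper itself offers no proof of this lemma --- it is imported wholesale from Hertweck \cite{HW}, Proposition 14.4 --- so your proposal is to be measured against that cited argument, which gets its traction precisely from the part of the class-preserving hypothesis you never use: the elements of the generating coset $Hg$. The conjugates of $hg$ have the rigid form $\sigma^j(h)\,c\,g$ with $c$ ranging over the subgroup $[g,H]$ (the image of $1-\sigma$ on $H$), so applying class-preservation to $\phi(hg)=\phi(h)g$ constrains $\phi(h)$ modulo $[g,H]=\gamma_2(G)$ uniformly in $h$, which is the leverage a correct global argument needs. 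As it stands, your proof establishes the (genuinely routine) local statement that $\phi$ is a power of $\sigma$ on each cyclic $\langle\sigma\rangle$-submodule, but the passage from local to global --- the entire content of the lemma --- remains open.
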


\begin{lemma}[\cite{PKR}, Lemma 5] \label{lempkr}
 For a group $G$,  $C_{Aut_c(G)}(Inn(G)) = Z(Aut_c(G))$.
\end{lemma}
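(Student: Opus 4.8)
The plan is to prove the two inclusions separately, the inclusion $Z(Aut_c(G)) \subseteq C_{Aut_c(G)}(Inn(G))$ being immediate: since every inner automorphism is class-preserving we have $Inn(G) \le Aut_c(G)$, and an element of the centre of $Aut_c(G)$ centralizes every subgroup of $Aut_c(G)$, in particular $Inn(G)$.

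For the reverse inclusion I would start from $\phi \in C_{Aut_c(G)}(Inn(G))$. By definition $\phi$ is class-preserving and commutes with every inner automorphism, so by the characterization of central automorphisms recalled just before Lemma \ref{lem2} it is a central automorphism; hence for each $x \in G$ we may write $\phi(x) = x c_x$ with $c_x \in Z(G)$. The goal is then to show that $\phi$ commutes with an arbitrary $\psi \in Aut_c(G)$.

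The ingredient that makes this work is the elementary observation that every class-preserving automorphism fixes $Z(G)$ pointwise: a central element $c$ has $G$-conjugacy class $\{c\}$, so any class-preserving automorphism must send $c$ to $c$. I would record this first and apply it to $\psi$. Now fix $x \in G$ and write $\psi(x) = g^{-1} x g$ for a suitable $g = g_x \in G$, which is possible since $\psi$ is class-preserving. Applying $\phi$, using that it is a homomorphism and that $\phi(g) = g c_g$ with $c_g \in Z(G)$, the central factors collapse and one obtains $\phi(\psi(x)) = c_g^{-1} g^{-1}\, x c_x\, g c_g = c_x\, g^{-1} x g = c_x \psi(x)$. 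On the other hand $\psi(\phi(x)) = \psi(x c_x) = \psi(x)\psi(c_x) = \psi(x) c_x$, the last step by the pointwise fixing of $Z(G)$. Since $c_x$ is central the two expressions coincide, whence $\phi \psi = \psi \phi$; as $\psi$ was arbitrary, $\phi \in Z(Aut_c(G))$.

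The computation is routine, and the single point that carries the real content of the lemma is the cancellation of the conjugating element's central correction term $c_g$ when $\phi$ is applied to $\psi(x) = g^{-1} x g$. It is exactly this cancellation, together with the fact that $\psi$ fixes $c_x$, that upgrades ``commutes with all inner automorphisms'' to ``commutes with all class-preserving automorphisms''; I expect verifying these two facts is all that stands between the setup and the conclusion, so there is no serious obstacle beyond organizing them correctly.
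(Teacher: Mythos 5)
Your proof is correct, and it is essentially the argument of \cite{PKR}, Lemma 5, which the paper cites without reproducing: one notes that centralizing $Inn(G)$ makes $\phi$ a central automorphism, that class-preserving automorphisms fix $Z(G)$ pointwise, and that writing $\psi(x)=g^{-1}xg$ makes the central correction $c_g$ cancel, yielding $\phi(\psi(x))=\psi(x)c_x=\psi(\phi(x))$. Both inclusions are handled exactly as in the cited source, so there is nothing to add.
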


\begin{lemma}[\cite{MKY}, Lemma 2.6]\label{lem6}
 Let $G$ be a finite group. Let ${x_1, x_2, \ldots,x_d}$ be a minimal generating set for $G$. Then $|Aut_c(G)| \leq \prod\limits_{i=1}^d |{x_i}^G|$.
\end{lemma}

\begin{lemma} \label{lemay}
Let $G$ be a purely non-abelian $p$-group, minimally generated by $\alpha_1, \alpha_2, \ldots, \alpha_t$.  Suppose  that $G/\gamma_2(G)$ is elementary abelian. If $\beta_i \in \Omega_1(Z(G))$ for $i = 1, \ldots,t$, then the map $\delta :\{\alpha_1, \alpha_2, \ldots, \alpha_t\} \rightarrow G$, defined as $\delta(\alpha_i) = \alpha_i\beta_i$, extends to a central automorphism of $G$.
\end{lemma}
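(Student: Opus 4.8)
The plan is to realise $\delta$ as the ``shift'' endomorphism attached to a homomorphism into the centre, and then to use the purely non-abelian hypothesis to upgrade it from an endomorphism to an automorphism. First I would produce a homomorphism $f \colon G/\gamma_2(G) \to Z(G)$ with $f(\alpha_i\gamma_2(G)) = \beta_i$. Since $\{\alpha_1, \ldots, \alpha_t\}$ is a minimal generating set, the cosets $\alpha_i\gamma_2(G)$ form an $\mathbb{F}_p$-basis of the elementary abelian group $G/\gamma_2(G)$; and since $Z(G)$ is abelian, $\Omega_1(Z(G))$ is elementary abelian, so the assignment $\alpha_i\gamma_2(G) \mapsto \beta_i \in \Omega_1(Z(G))$ extends $\mathbb{F}_p$-linearly to a homomorphism $f$. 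Composing with the projection $G \to G/\gamma_2(G)$ yields a homomorphism $\tilde f \colon G \to Z(G)$ with $\tilde f(\alpha_i) = \beta_i$ and $\tilde f(\gamma_2(G)) = 1$.

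Next I would define $\delta \colon G \to G$ by $\delta(g) = g\,\tilde f(g)$. Because $\tilde f(g)$ is central, $\delta(gh) = gh\,\tilde f(g)\tilde f(h) = g\tilde f(g)\,h\tilde f(h) = \delta(g)\delta(h)$, so $\delta$ is an endomorphism; it sends $\alpha_i$ to $\alpha_i\beta_i$ as required, and $g^{-1}\delta(g) = \tilde f(g) \in Z(G)$ shows that $\delta$ acts trivially on $G/Z(G)$, i.e.\ that $\delta$ is central. It then remains only to see that $\delta$ is bijective, and since $G$ is finite it suffices to check injectivity.

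The crux is this injectivity, and it is exactly where the purely non-abelian hypothesis is indispensable. If $z \in Z(G)$ had order $p$ with $z \notin \Phi(G)$, then choosing a maximal subgroup $M$ with $z \notin M$ would give $G = M \times \langle z\rangle$ (using $M \trianglelefteq G$, $\langle z\rangle \trianglelefteq G$ and $M \cap \langle z\rangle = 1$), contradicting pure non-abelianness; hence $\Omega_1(Z(G)) \le \Phi(G)$. Since $G/\gamma_2(G)$ is elementary abelian we have $G^p \le \gamma_2(G)$, so $\Phi(G) = \gamma_2(G)G^p = \gamma_2(G)$, and therefore $\operatorname{im}\tilde f \le \Omega_1(Z(G)) \le \gamma_2(G)$. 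Now if $\delta(g) = 1$ then $g = \tilde f(g)^{-1} \in \gamma_2(G)$, whence $\tilde f(g) = 1$ and $g = 1$; thus $\delta$ is injective, hence an automorphism. I expect this passage from endomorphism to automorphism to be the only genuine obstacle; alternatively it can be read off from the Adney--Yen correspondence underlying Lemma \ref{lem2}, since the injection $Autcent(G) \hookrightarrow Hom(G/\gamma_2(G), Z(G))$, $\theta \mapsto (g\gamma_2(G) \mapsto g^{-1}\theta(g))$, is forced to be surjective by the equality of cardinalities, so our $f$ is automatically realised by a central automorphism.
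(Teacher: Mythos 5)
Your proof is correct, and its first half coincides with the paper's: both extend $\alpha_i\gamma_2(G) \mapsto \beta_i$ to a homomorphism $f \colon G/\gamma_2(G) \to Z(G)$ (legitimate since $G/\gamma_2(G)$ is elementary abelian, with the Burnside basis theorem and $\Phi(G)=\gamma_2(G)$ guaranteeing that the $\alpha_i\gamma_2(G)$ form a basis), and both consider the shift map $g \mapsto gf(g)$. The difference lies in how that map is shown to be a central \emph{automorphism}: the paper simply invokes Adney--Yen \cite[Theorem 1]{AY} as a black box --- for a purely non-abelian finite $p$-group, every homomorphism $G/\gamma_2(G) \to Z(G)$ yields a central automorphism in this way --- whereas you prove bijectivity directly, showing $\Omega_1(Z(G)) \leq \Phi(G)$ from pure non-abelianness (a central element of order $p$ outside a maximal subgroup $M$ would give $G = M \times \gen{z}$, splitting off an abelian direct factor) and $\Phi(G) = \gamma_2(G)G^p = \gamma_2(G)$ from the elementary abelian hypothesis, so that $\operatorname{im} f \leq \gamma_2(G) \leq \Ker f$ and injectivity of $g \mapsto gf(g)$ follows at once. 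The paper's citation is shorter and the cited result is in any case already present (Lemma \ref{lem2} is the cardinality half of the same Adney--Yen theorem); your argument buys self-containedness and makes visible exactly where pure non-abelianness enters, at the cost of leaning on the extra hypothesis $G^p \leq \gamma_2(G)$, which the general Adney--Yen theorem does not need. Your closing alternative --- surjectivity of $\theta \mapsto (g\gamma_2(G) \mapsto g^{-1}\theta(g))$ forced by the cardinality equality of Lemma \ref{lem2} --- is also sound and is essentially the paper's route rephrased through the correspondence.
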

\begin{proof}
Note that the map $f_{\delta} : \{\alpha_i\gamma_2(G) | \ i = 1, \ldots,t\} \rightarrow \Omega_1(Z(G))$ defined as 
$\alpha_i\gamma_2(G) \rightarrow \beta_i$ for $i = 1, \ldots,t$, extends to a homomorphism from $G/\gamma_2(G)$ to $Z(G)$ because $G/\gamma_2(G)$ is elementary abelian. Now it follows from \cite[Theorem 1]{AY} that the map $g \rightarrow gf_{\delta}(g)$ for all $g \in G$ is a central automorphism of $G$. This proves the lemma.  \hfill $\Box$

\end{proof}
Let $G$ be a group minimally generated by $\alpha_1, \alpha_2, \ldots, \alpha_t$. Then note that any element of $G$ can be written as 
$\eta\alpha_1^{k_1}\alpha_2^{k_2}\cdots\alpha_t^{k_t}$ for some $\eta \in \gamma_2(G)$ and some $k_1, k_2, \ldots,k_t \in \mathbb{Z}$. Also note that any conjugate of an $\alpha_i$ can be written as 
$\alpha_i[\alpha_i, \eta\alpha_1^{k_1}\alpha_2^{k_2}\cdots\alpha_t^{k_t}]$ for some $\eta \in \gamma_2(G)$ and some $k_1, k_2, \ldots,k_t \in \mathbb{Z}$. It follows that an automorphism $\delta$
of $G$ is class-preserving if and only if for every $k_1, k_2, \ldots,k_t \in \mathbb{Z}$ and for all $\eta_1 \in \gamma_2(G)$, there exist  $l_1, l_2, \ldots,l_t \in \mathbb{Z}$ and $\eta_2 \in \gamma_2(G)$ 
(depending on $k_1, k_2, \ldots,k_t$ and $\eta_1$) such that 
\[\bigg[\eta_1\prod\limits_{i=1}^t \alpha_i^{k_i}, \ \ \eta_2\prod\limits_{i=1}^t \alpha_i^{l_i}\bigg] = 
                                                        \bigg(\eta_1\prod\limits_{i=1}^t \alpha_i^{k_i}\bigg)^{-1}\delta\bigg(\eta_1\prod\limits_{i=1}^t \alpha_i^{k_i}\bigg).\]

We will be using these facts and Lemma \ref{lem1} very frequently in the proofs without any further reference.

\section{Groups $G$ with trivial $Out_c(G)$}

In this section, we deal with those groups $G$ for which $Out_c(G)$ is trivial. The isoclinism family $\Phi_k$ of groups of order $p^6$ contains a group $\Phi_k{(1^6)}$ for certain values of $k$. These groups will be considered frequently throughout the paper.

\begin{lemma}
 Let $G$ be the group $\Phi_{11}(1^6)$. Then $Out_c(G) = 1$.
\end{lemma}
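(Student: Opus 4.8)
The plan is to pin down $Aut_c(G)$ by squeezing it between the obvious inclusion $Inn(G) \le Aut_c(G)$ and the upper bound furnished by Lemma \ref{lem6}, and to show that the two ends coincide. First I would fix a minimal generating set $\{x_1, \dots, x_d\}$ for $G = \Phi_{11}(1^6)$ from James's presentation and record the class-$c$ data: the commutator subgroup $\gamma_2(G)$, the center $Z(G)$, and each commutator $[x_i, x_j]$. From these relations the conjugacy class $x_i^G = x_i[x_i, G]$ is explicit, so each index $|x_i^G| = |G : C_G(x_i)|$ can be read off directly, and Lemma \ref{lem6} gives $|Aut_c(G)| \le \prod_{i=1}^d |x_i^G|$.

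Next I would compare this product with $|Inn(G)| = |G : Z(G)|$. Since the $x_i$ generate $G$, one has $\bigcap_i C_G(x_i) = Z(G)$, whence $|Inn(G)| = |G : \bigcap_i C_G(x_i)| \le \prod_i |x_i^G|$; so the whole content is to verify that no slack occurs, i.e. that the centralizers $C_G(x_i)$ intersect in such a way that equality holds. If I can read off from the commutator relations that $\prod_i |x_i^G| = |G : Z(G)|$, then the chain $|Inn(G)| \le |Aut_c(G)| \le \prod_i |x_i^G| = |Inn(G)|$ collapses, forcing $Aut_c(G) = Inn(G)$ and hence $Out_c(G) = 1$.

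The main obstacle is precisely this tightness check: if the naive product overshoots $|Inn(G)|$, then Lemma \ref{lem6} alone is insufficient and one must exploit the relations among the generators. In that case a class-preserving $\delta$ sends each $x_i \mapsto x_i[x_i, g_i]$ for suitable $g_i \in G$, and applying $\delta$ to the defining relators of $G$ forces congruences among the $g_i$ modulo the relevant centralizers; these congruences should cut the count back down to $|Inn(G)|$ and pin $\delta$ to a single inner automorphism. As a shortcut I would first test whether $G$ admits an abelian normal subgroup $H$ with $G/H$ cyclic — for instance an abelian maximal subgroup, or $H = \gen{x_2, \dots, x_d, \gamma_2(G)}$ should it happen to be abelian — since Lemma \ref{lem4} would then yield $Out_c(G) = 1$ in one line and bypass the class-size bookkeeping entirely.
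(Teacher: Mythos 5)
There is a genuine gap: all three routes you propose fail for $\Phi_{11}(1^6)$, and the idea that actually closes the argument is absent. First, the tightness check fails badly. The group $G$ is a special $p$-group with $\gamma_2(G)=Z(G)$ of order $p^3$, minimally generated by $\alpha_1,\alpha_2,\alpha_3$ with $\beta_1=[\alpha_2,\alpha_3]$, $\beta_2=[\alpha_3,\alpha_1]$, $\beta_3=[\alpha_1,\alpha_2]$ independent of order $p$; hence each $|\alpha_i^G|=p^2$, and Lemma \ref{lem6} gives only $|Aut_c(G)|\le p^6$, while $|Inn(G)|=|G/Z(G)|=p^3$. The product overshoots by a factor of $p^3$, so the chain does not collapse. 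Second, the Lemma \ref{lem4} shortcut is unavailable: since $G/\gamma_2(G)$ is elementary abelian of rank $3$, a cyclic quotient $G/H$ forces $H$ to be maximal, and every maximal subgroup $\gen{u,v,\gamma_2(G)}$ is non-abelian, because for $u\equiv\alpha_1^{a}\alpha_2^{b}\alpha_3^{c}$, $v\equiv\alpha_1^{a'}\alpha_2^{b'}\alpha_3^{c'}$ modulo the central subgroup $\gamma_2(G)$, the commutator $[u,v]$ is given by the cross product of the exponent vectors $(a,b,c)$ and $(a',b',c')$ in the basis $\beta_1,\beta_2,\beta_3$, which is nonzero whenever the vectors are linearly independent.

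Third --- and this is the substantive error --- your fallback mechanism of applying $\delta$ to the defining relators produces no congruences at all here. Since $G/\gamma_2(G)$ is elementary abelian and each shift $[\alpha_i,g_i]$ lies in $\Omega_1(Z(G))$, Lemma \ref{lemay} shows that \emph{all} $p^6$ candidate maps $\alpha_i\mapsto\alpha_i[\alpha_i,g_i]$ extend to central automorphisms of $G$; the relators are preserved automatically, so relator bookkeeping cannot cut the count below $p^6$. What the paper does instead, and what your proposal is missing, is to test the class-preserving condition on \emph{products} of generators rather than on generators alone. Writing $\delta(\alpha_1)=\alpha_1\beta_3^{t_1}\beta_2^{s_1}$, $\delta(\alpha_2)=\alpha_2\beta_3^{t_2}\beta_1^{r_2}$, $\delta(\alpha_3)=\alpha_3\beta_2^{s_3}\beta_1^{r_3}$, the requirement that $\delta(\eta\alpha_1^{k_1}\alpha_2^{l_1}\alpha_3^{m_1})$ be conjugate to $\eta\alpha_1^{k_1}\alpha_2^{l_1}\alpha_3^{m_1}$ amounts to solvability, for all $(k_1,l_1,m_1)$, of the congruences $l_1m_2-l_2m_1\equiv l_1r_2+m_1r_3$, $k_2m_1-k_1m_2\equiv k_1s_1+m_1s_3$, and $k_1l_2-k_2l_1\equiv k_1t_1+l_1t_2\pmod p$; specializing one exponent to zero at a time (e.g.\ $k_1=0$ with $l_1,m_1$ nonzero, testing an element like $\alpha_2\alpha_3$) forces $t_2\equiv-s_3$, $r_3\equiv-t_1$, $r_2\equiv-s_1\pmod p$. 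These three independent linear conditions cut the $p^6$ candidates down to at most $p^3=|Inn(G)|$, giving $Aut_c(G)=Inn(G)$. Without this step your argument stalls at the useless bound $p^6$.
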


\begin{proof}
The group $G$ is a special $p$-group, minimally generated by $\alpha_1, \alpha_2$ and $\alpha_3$. The commutator subgroup $\gamma_2(G)$ is generated by $\beta_1 := [\alpha_2, \alpha_3], \beta_2 := [\alpha_3, \alpha_1], \beta_3 := [\alpha_1, \alpha_2]$.
The conjugates of $\alpha_1, \alpha_2$ and $\alpha_3$ are  $\alpha_1\beta_3^t\beta_2^s$, $\alpha_2\beta_3^t\beta_1^r$ and
 $\alpha_3\beta_2^s\beta_1^r$ respectively, where $r$, $s$ and $t$ vary over $\mathbb{Z}$. Since the exponent of $\gamma_2(G)$ is $p$, it follows that  
$|\alpha_i^G| = p^2$ for $i = 1,2,3$. Therefore by Lemma \ref{lem6}, $|Aut_c(G)| \leq p^6$.
 Define a map $\delta : \{\alpha_1, \alpha_2, \alpha_3\} \rightarrow G$ such that $\alpha_1 \mapsto \alpha_1\beta_3^{t_1}\beta_2^{s_1}, \alpha_2 \mapsto 
 \alpha_2\beta_3^{t_2}\beta_1^{r_2}$ and $\alpha_3 \mapsto  \alpha_3\beta_2^{s_3}\beta_1^{r_3}$, for some $s_1, t_1, r_2, t_2, r_3, s_3 \in \mathbb{Z}$. 
 By Lemma \ref{lemay}, this map extends to a central automorphism of $G$. Since $\delta$ fixes $\gamma_2(G)$ element-wise,
 for $k_1, l_1, m_1 \in \mathbb{Z}$ and $\eta \in  \gamma_2(G)$, 
\[\delta(\eta\alpha_1^{k_1}\alpha_2^{l_1} \alpha_3^{m_1}) = 
 \eta\alpha_1^{k_1}\alpha_2^{l_1} \alpha_3^{m_1}\beta_1^{l_1r_2 + m_1r_3}\beta_2^{k_1s_1 + m_1s_3}\beta_3^{k_1t_1 + l_1t_2}.\]
 Therefore $\delta$ extends to a class-preserving automorphism if and only if for every $k_1, l_1, m_1 \in \mathbb{Z}$, and $\eta_1 \in \gamma_2(G)$ 
 there exist $k_2, l_2, m_2$ (depending on $k_1, l_1, m_1$) and $\eta_2 \in \gamma_2(G)$ such that
 \[[\eta_1\alpha_1^{k_1}\alpha_2^{l_1} \alpha_3^{m_1}, \eta_2\alpha_1^{k_2}\alpha_2^{l_2} \alpha_3^{m_2}] = 
   \beta_1^{l_1r_2 + m_1r_3}\beta_2^{k_1s_1 + m_1s_3}\beta_3^{k_1t_1 + l_1t_2}.\]
 Expanding the left hand side, we get
\[\beta_1^{l_1m_2 - l_2m_1}\beta_2^{k_2m_1 - k_1m_2}\beta_3^{k_1l_2 - k_2l_1} =  \beta_1^{l_1r_2 + m_1r_3}\beta_2^{k_1s_1 + m_1s_3}\beta_3^{k_1t_1 + l_1t_2}.\]
Comparing the powers of $\beta_i$'s, we have that  $\delta$ extends to a class-preserving automorphism if and only if the following equations hold true:
\[l_1m_2 - l_2m_1 \equiv l_1r_2 + m_1r_3 \pmod{p}\]
\[k_2m_1 - k_1m_2 \equiv k_1s_1 + m_1s_3 \pmod{p}\]
\[k_1l_2 - k_2l_1 \equiv k_1t_1 + l_1t_2 \pmod{p}. \]
Let $\delta$ be a class-preserving automorphism.
Choose $k_1 = 0$ and $m_1, l_1$ to be non-zero modulo $p$. Then $k_2m_1 \equiv m_1s_3 \pmod{p}$ and $-k_2l_1 \equiv l_1t_2 \pmod{p}$. It follows that $t_2 \equiv -s_3 \pmod{p}$. Similarly if we choose $l_1$ to be zero and 
$k_1, m_1$ to be non-zero modulo $p$, then $r_3 \equiv -t_1 \pmod{p}$, and if $m_1$ to be zero and $l_1, k_1$ to be non-zero modulo $p$, then $r_2 \equiv -s_1 \pmod{p}$.  
It follows that $|Aut_c(G)| \leq p^3 = |Inn(G)|$. Therefore $Out_c(G) = 1$.   \hfill $\Box$

\end{proof}

\begin{lemma}
 Let $G$ be one of the groups $\Phi_{17}(1^6)$ and $\Phi_{19}(1^6)$. 
 Then $Out_c(G) = 1$.
\end{lemma}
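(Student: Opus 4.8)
The plan is to reproduce, for each of $\Phi_{17}(1^6)$ and $\Phi_{19}(1^6)$, the scheme just used for $\Phi_{11}(1^6)$: bound $|Aut_c(G)|$ from above through the conjugacy-class sizes of a minimal generating set (Lemma \ref{lem6}), describe all candidate class-preserving automorphisms explicitly, translate the class-preserving requirement into a system of congruences modulo $p$, and verify that the solution set has cardinality exactly $|Inn(G)| = |G/Z(G)|$, so that $Out_c(G) = 1$.

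Concretely, I would first read off from James's presentation a minimal generating set $\alpha_1,\dots,\alpha_t$, a set of basic commutators generating $\gamma_2(G)$, and the subgroups $Z(G)$ and $\Omega_1(Z(G))$. For each generator I would compute the set $[\alpha_i,G]$, hence $|\alpha_i^G|$, and form the product $\prod_i|\alpha_i^G|$, which Lemma \ref{lem6} gives as an upper bound for $|Aut_c(G)|$. Since a class-preserving $\delta$ fixes $G/\gamma_2(G)$ pointwise, it must send $\alpha_i\mapsto\alpha_i\beta_i$ with $\beta_i\in[\alpha_i,G]\subseteq\gamma_2(G)$; when $G$ is of class $2$ these $\beta_i$ lie in $\Omega_1(Z(G))$ and Lemma \ref{lemay} (using that $G/\gamma_2(G)$ is elementary abelian) guarantees each such assignment does extend to a central automorphism, so the candidates are parametrized exactly by the tuples $(\beta_1,\dots,\beta_t)$.

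Next I would impose the class-preserving condition on a general word. Computing $\delta(\eta\prod_i\alpha_i^{k_i})$ and equating it with a commutator $[\eta\prod_i\alpha_i^{k_i},\ \eta'\prod_i\alpha_i^{l_i}]$, expanded via Lemma \ref{lem1}, turns the statement that $\delta$ is class-preserving into congruences relating the coordinates of the $\beta_i$ to the commutator pairing. Specializing $k_1,\dots,k_t$ to convenient values (setting all but two of them to zero, exactly as in the $\Phi_{11}(1^6)$ argument) should produce enough linear relations among the twist exponents to collapse the count of admissible $\delta$ to $|Inn(G)|$. If either group turns out to be $2$-generated, I would instead first compose with a suitable inner automorphism to arrange $\delta(\alpha_1)=\alpha_1$ and then read off the constraint on $\delta(\alpha_2)$, which is often quicker.

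The main obstacle will be the commutator bookkeeping and, if the groups are not of class $2$, the fact that the twists $\beta_i$ need no longer be central. In the special group $\Phi_{11}(1^6)$ the commutator $[\prod_i\alpha_i^{k_i},\prod_i\alpha_i^{l_i}]$ is purely bilinear and the congruences are transparent; for $\Phi_{17}(1^6)$ and $\Phi_{19}(1^6)$ I expect extra commutator terms (from a higher nilpotency class or a non-special center), so correctly collecting the coefficient of each basic commutator — and, where $\beta_i\notin Z(G)$, separately checking that the proposed map even extends to an automorphism rather than merely invoking Lemma \ref{lemay} — is the delicate part. I would carry out the expansion and the ensuing congruence analysis separately for the two presentations and confirm in each case that the admissible twists are precisely the inner ones.
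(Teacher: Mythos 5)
Your overall frame (the Lemma \ref{lem6} upper bound, then cutting the count down to $|Inn(G)|$) matches the paper's, but the engine you propose for the second step does not apply to these two groups, and the step that actually makes the paper's proof work is absent from your plan. Both $\Phi_{17}(1^6)$ and $\Phi_{19}(1^6)$ are $3$-generated of class $3$ (so your two-generator fallback is moot), and a class-preserving automorphism of such a group need not be central: already conjugation by $\alpha$ sends $\alpha_1$ to $\alpha_1\alpha_2$ with $\alpha_2\notin Z(G)$. Consequently your parametrization of ``all candidates'' by tuples $(\beta_1,\dots,\beta_t)$ via Lemma \ref{lemay} captures only $Aut_c(G)\cap Autcent(G)$, and a congruence count over these central twists cannot bound $|Aut_c(G)|$ itself. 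You do flag the non-central sector (``separately checking that the proposed map even extends to an automorphism''), but you give no method for it, and that is exactly where all the work would lie: one would have to decide, tuple by tuple, both extendability and class-preservation, which your sketch leaves entirely open.

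The paper avoids any wholesale classification by a pigeonhole observation you do not use. For $\Phi_{17}(1^6)$ one has $|\alpha^G|\le p^2$, $|\alpha_1^G|\le p^2$, $|\beta^G|=p$, hence $|Aut_c(G)|\le p^5$, while $|Inn(G)|=p^4$; since an automorphism is determined by the images of the generators, $Aut_c(G)$ injects into $\alpha^G\times\alpha_1^G\times\beta^G$, so if $|Aut_c(G)|=p^5$ then \emph{every} tuple of conjugates is realized by a class-preserving automorphism. It therefore suffices to kill a single tuple, and the paper picks a central one, $\alpha\mapsto\alpha$, $\alpha_1\mapsto\alpha_1$, $\beta\mapsto\beta\gamma$ (an automorphism by Lemma \ref{lemay}, since here $\gamma\in Z(G)$), and tests it on the mixed word $\alpha\beta$: the computation
\[
[\alpha\beta,\ \eta_1\alpha^{k_1}\alpha_1^{l_1}\beta^{m_1}]=\alpha_2^{-l_1}\alpha_3^{-r_1}\gamma^{l_1}
\]
shows the value $\gamma$ is never attained (a $\gamma$-exponent $1$ forces $l_1\equiv 1$, whence a nonzero $\alpha_2$-exponent), giving $|Aut_c(G)|\le p^4=|Inn(G)|$; the case $\Phi_{19}(1^6)$ runs the same way with the tuple $\alpha\mapsto\alpha\beta_1$, $\alpha_1\mapsto\alpha_1$, $\alpha_2\mapsto\alpha_2$. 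Note that the discriminating check must be made on a \emph{product} of two generators --- by construction every candidate sends each generator to a conjugate of itself, so generator-wise constraints can never lower the bound. Without the pigeonhole step, refuting central candidates (which is all your congruence system, as set up, can reach) does not yield $|Aut_c(G)|\le p^4$, so as written your argument does not close.
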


\begin{proof}
 First assume that $G$ is the group $\Phi_{17}(1^6)$. Then $G$ is a $p$-group of nilpotency class $3$, minimally generated by $\alpha, \alpha_1$ and $\beta$. The commutator subgroup 
 $\gamma_2(G)$ is abelian and generated by $\alpha_2 := [\alpha_1, \alpha], \alpha_3 := [\alpha_2, \alpha]$ and $\gamma := [\beta, \alpha_1]$. The center $Z(G)$ is of order $p^2$, generated by $\alpha_3$ and $\gamma$. It is easy to see that
 $|\alpha_1^{G}| \leq p^2, |\alpha^{G}| \leq p^2$
 and $|\beta^{G}| = p$. Therefore, applying Lemma \ref{lem6}, we have $|Aut_c(G)| \leq p^5$. Define a map $\delta : \{\alpha, \alpha_1, \beta\} \rightarrow G$ such that
 $\alpha \mapsto \alpha, \alpha_1 \mapsto \alpha_1$ and $\beta \mapsto \alpha_1^{-1}\beta\alpha_1$ = $\beta\gamma$. Suppose that $|Aut_c(G)| = p^5$. Then $\delta$
 must extend to a class-preserving automorphism of $G$. Hence there exist  $\eta_1 (= \alpha_2^{r_1}\alpha_3^{s_1}\gamma^{t_1}$ say) 
 $\in \gamma_2(G)$ and $k_1, l_1, m_1 \in \mathbb{Z}$ such that 
 $[\alpha\beta, \ \eta_1\alpha^{k_1}\alpha_1^{l_1}\beta^{m_1}] = \gamma$. It is a routine calculation to show that
\[[\alpha\beta, \eta_1\alpha^{k_1}\alpha_1^{l_1}\beta^{m_1}] = \alpha_2^{-l_1}\alpha_3^{-r_1}\gamma^{l_1},\]
which can not be equal to $\gamma$ for any value of $l_1$ and $r_1$. Hence $\delta$ can not be a  class-preserving automorphism, and hence  $|Aut_c(G)| \leq p^4$. But 
$|Inn(G)| = p^4$, so we have $Aut_c(G) = Inn(G)$.  

Now we take $G$ to be $\Phi_{19}(1^6)$. Then $G$ is  a $p$-group of class $3$, minimally generated by $\alpha, \alpha_1$ and $\alpha_2$. Define a map $\delta : \{\alpha, \alpha_1, \alpha_2\} \rightarrow G$ such that
 $\alpha \mapsto \alpha_1^{-1}\alpha\alpha_1 (=  \alpha\beta_1), \alpha_1 \mapsto \alpha_1$ and $\alpha_2 \mapsto \alpha_2$. Now the proof follows on the same lines as for the group $\Phi_{17}(1^6)$.
\hfill $\Box$

\end{proof}

\begin{lemma}
 Let $G$ be the group $\Phi_{23}(1^6)$. Then $Out_c(G) = 1$.
\end{lemma}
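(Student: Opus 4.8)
The goal is to show $Out_c(G)=1$ for $G=\Phi_{23}(1^6)$, which by the established framework is equivalent to showing $Aut_c(G)=Inn(G)$, i.e. $|Aut_c(G)|=|Inn(G)|$. Following the template set by the previous two lemmas, my approach would be to first extract the structural data of $G$ from James's classification \cite{RJ}: a minimal generating set, the structure of $\gamma_2(G)$ and $Z(G)$, the nilpotency class, and the commutator relations among generators. With this in hand, the strategy is a two-sided squeeze: obtain an upper bound on $|Aut_c(G)|$ via Lemma \ref{lem6} by computing the conjugacy class sizes $|\alpha_i^G|$ of the generators, and then show this upper bound is forced down to $|Inn(G)|$ by exhibiting that any candidate class-preserving automorphism beyond the inner ones leads to an unsatisfiable system of congruences.

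**Key steps.** First I would fix the generators and relations and compute $|\alpha_i^G|=|[\alpha_i,G]|$ for each generator, yielding via Lemma \ref{lem6} an a priori bound $|Aut_c(G)|\le p^N$ for the appropriate $N$. Since $|Inn(G)|=|G/Z(G)|=p^{6-\dim Z(G)}$, the gap to close is $N-\log_p|Inn(G)|$. Next, mimicking the $\Phi_{17}(1^6)$ argument, I would suppose for contradiction that $|Aut_c(G)|$ attains a value strictly larger than $|Inn(G)|$; this forces the existence of a class-preserving automorphism $\delta$ sending some generator $\alpha_i$ to a conjugate realizing a ``new'' direction not covered by inner automorphisms. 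Writing a general element as $\eta\prod\alpha_j^{k_j}$ and expanding the required commutator identity $[\alpha_i\cdot(\text{element}),\,\eta_1\prod\alpha_j^{k_j}]=(\text{prescribed central/commutator value})$ using Lemma \ref{lem1}, I would compare exponents of the $\gamma_2(G)$-basis elements modulo $p$ to obtain a linear system. The contradiction should emerge exactly as in the $\Phi_{17}$ case: the prescribed target value (a specific central generator) lies outside the span of what the commutator bracket can produce, because the relevant variable cancels or is constrained to zero by the relations of $G$.

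**Main obstacle.** The principal difficulty is bookkeeping: identifying which generator's image carries the ``extra'' freedom and setting up the commutator expansion so that the obstruction is transparent. Concretely, the hard part is verifying that a particular central element (analogous to $\gamma$ in the $\Phi_{17}$ proof) cannot be hit by any bracket $[\alpha_i u,\, v]$ as $u,v$ range over $G$ — this requires correctly tracking the nilpotency class and whether triple commutators contribute. If $\Phi_{23}(1^6)$ has a structure permitting more central automorphisms (larger $\Omega_1(Z(G))$), I would additionally invoke Lemma \ref{lemay} to build the relevant central automorphisms explicitly and then test the class-preserving condition on them, as was done for $\Phi_{11}(1^6)$. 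I expect the computation to be routine once the generators and relations are pinned down, with the only genuine check being the non-solvability of the final congruence system; the cleanest contradiction will likely come from choosing the free exponents $k_i$ strategically (setting some to zero) to isolate and eliminate the offending variable, exactly the device used in both preceding proofs.
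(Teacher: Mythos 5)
Your framing matches the paper's opening moves: for $G=\Phi_{23}(1^6)$ (class $4$, generated by $\alpha,\alpha_1$, with $Z(G)=\gen{\alpha_4,\gamma}$ of order $p^2$) one gets $|\alpha^G|\le p^3$, $|\alpha_1^G|\le p^2$, hence $|Aut_c(G)|\le p^5$ by Lemma \ref{lem6}, while $|Inn(G)|=p^4$, and one then assumes $|Aut_c(G)|=p^5$ to force the candidate $\delta$ with $\delta(\alpha)=\alpha$, $\delta(\alpha_1)=\alpha_2^{-1}\alpha_1\alpha_2=\alpha_1\gamma$ to be class-preserving. But your plan for the decisive step --- expand a commutator identity in $G$ and read off an unsatisfiable congruence system ``exactly as in the $\Phi_{17}$ case'' --- is precisely where the proposal stops short, and it is not the route the paper takes. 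Note that your $\delta$ is a \emph{central} automorphism that already sends each generator to a conjugate of itself, so no test on a single generator can refute it; a direct refutation would have to locate a witness element (a product such as $\alpha\alpha_1$) and carry out a full class-$4$ collection computation, with binomial-coefficient contributions from $[\alpha_1,\alpha^{k}]$ and cross terms in both central directions $\alpha_4$ and $\gamma$. You neither exhibit the witness nor set up the system, so the core of the argument is missing; asserting that the contradiction ``should emerge'' is a hope, not a proof, and the analogy with $\Phi_{17}$ (class $3$, where the obstruction is visible in one line) does not transfer automatically.

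The paper's actual mechanism is different and avoids this computation entirely: it quotients by the central subgroup $H=\gen{\alpha_4}$ and observes that $G/H$ belongs to the isoclinism family $\Phi_6$ of groups of order $p^5$, for which $Aut_c(G/H)=Inn(G/H)$ is already known (\cite[Lemma 5.3]{MKY}). If $\delta$ were class-preserving, the induced automorphism $\bar{\delta}$ of $G/H$ would be non-trivial, inner, \emph{and} central, hence induced by an element of $Z_2(G/H)=\gen{\alpha_2H, Z(G/H)}$, say $\alpha_2^tH$; then $\bar{\delta}(\alpha H)=\alpha\alpha_3^{-t}H=\alpha H$ forces $t\equiv 0 \pmod p$, whence $\bar{\delta}(\alpha_1H)=\alpha_1H$, contradicting $\bar{\delta}(\alpha_1H)=\alpha_1\gamma H\neq \alpha_1H$. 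This reduction to a known $p^5$ result is the idea your proposal lacks, and without it (or a genuinely completed in-$G$ commutator analysis) the proof is incomplete.
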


\begin{proof}
 The group $G$ is a  $p$-group of class $4$, minimally generated by $\alpha, \alpha_1$. The commutator subgroup $\gamma_2(G)$ is abelian and generated by $\alpha_{i+1} := [\alpha_i, \alpha]$ for $1 \leq i \leq 3$ and $\gamma := [\alpha_1, \alpha_2]$. The center $Z(G)$ is of order $p^2$, generated by $\alpha_4$ and $\gamma$. It is easy to check that $|\alpha^{G}| \leq p^3$ and 
 $|\alpha_1^{G}| \leq p^2.$ Therefore $|Aut_c(G| \leq p^5$. Let $H = \gen{\alpha_4}$. Since $\alpha_4 \in Z(G), H$ is normal. Consider the 
 quotient group $G/H$. One can check that the group $G/H$ belongs to the family $\Phi_6.$ Therefore it follows from \cite[Lemma 5.3]{MKY} that 
 $Aut_c(G/H) = Inn(G/H)$. Now define a map $\delta : \{\alpha, \alpha_1\} \rightarrow G$, such that $\delta(\alpha) = \alpha$ and $
 \delta(\alpha_1) = \alpha_2^{-1}\alpha_1\alpha_2 = \alpha_1\gamma$. Suppose that $|Aut_c(G| = p^5$. Then $\delta$ must extend to a class-preserving automorphism of
 $G$. It also induces a non-trivial class-preserving automorphism (say $\bar{\delta}$) of $G/H$. But then $\bar{\delta}$ is an inner automorphism of $G/H$ because $Aut_c(G/H) = Inn(G/H)$. 
 Now note that $\bar{\delta}$ is also a central automorphism of $G/H$. It follows that it
 is induced by some element in $Z_2(G/H) = \gen{\alpha_2H, Z(G/H)}$. Let 
 $\bar{\delta}$ be induced by $\alpha_2^tH$ for some $ t \in \mathbb{Z}$.
 Hence \[\bar{\delta}(\alpha H) = (\alpha_2^tH)^{-1}\alpha H (\alpha_2^tH) = \alpha\alpha_3^{-t}H .\]
 This must be equal to $\alpha H$. But then $t=0 \pmod{p}$ and hence $\bar{\delta}(\alpha_1H) = \alpha_1H$, a contradiction. Therefore we have $|Aut_c(G)| \leq p^4$. But 
 $|Inn(G)| = p^4$, therefore $Out_c(G) = 1$.   \hfill $\Box$

 \end{proof}

\begin{lemma}
 Let $G$ be the group $\Phi_{27}(1^6)$.  Then $Out_c(G) = 1$.
\end{lemma}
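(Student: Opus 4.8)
The plan is to follow the same template that the preceding lemmas have established for groups with trivial $Out_c(G)$: exhibit a minimal generating set, identify $\gamma_2(G)$ and $Z(G)$, bound $|Aut_c(G)|$ from above via Lemma \ref{lem6} by computing the sizes of the relevant conjugacy classes, and then show the extra ``slack'' between this upper bound and $|Inn(G)| = |G/Z(G)|$ cannot be realized by any genuine class-preserving automorphism. So first I would record the structural data for $G = \Phi_{27}(1^6)$: its nilpotency class, a minimal generating set $\alpha, \alpha_1$ (the family $\Phi_{27}$ is $2$-generated of class $4$ in James's list), the generators of $\gamma_2(G)$ obtained by iterated commutators $\alpha_{i+1} := [\alpha_i, \alpha]$ together with any extra basic commutator, and the center $Z(G)$. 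From these I would read off $|\alpha^G|$ and $|\alpha_1^G|$ and apply Lemma \ref{lem6} to get an upper bound $|Aut_c(G)| \leq p^m$ for the appropriate $m$, which I expect to exceed $|Inn(G)|$ by exactly one factor of $p$.

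Next I would try to close the gap by the quotient trick used in the $\Phi_{23}(1^6)$ proof, since $\Phi_{27}$ sits just after it and is likely amenable to the same idea. Concretely, I would pick a central subgroup $H = \gen{z}$ with $z$ a suitable element of $Z(G)$ (most naturally the top commutator $\alpha_4$ or whichever central generator kills the class-$4$ obstruction), check which isoclinism family $G/H$ falls into, and invoke the already-known fact that $Aut_c(G/H) = Inn(G/H)$ for that smaller family from \cite{MKY}. A candidate non-inner central automorphism $\delta$ (defined by sending $\alpha_1 \mapsto \alpha_1\gamma$ or $\alpha \mapsto \alpha\beta$ for an appropriate central $\gamma,\beta$) would, if it existed as a class-preserving automorphism, induce a class-preserving automorphism $\bar\delta$ on $G/H$; that forces $\bar\delta$ to be inner, hence induced by an element of $Z_2(G/H)$, and tracking its action on the generators should produce a numerical contradiction modulo $p$.

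Should the quotient argument not cut the bound down cleanly, the fallback is the direct commutator computation exactly as in the $\Phi_{17}(1^6)$ case: assume $|Aut_c(G)|$ equals the Lemma \ref{lem6} bound, so that the candidate map $\delta$ must be class-preserving, then write a general conjugator $\eta_1\alpha^{k_1}\alpha_1^{l_1}$ with $\eta_1 \in \gamma_2(G)$, expand $[\,\cdot\,,\,\cdot\,]$ using Lemma \ref{lem1} and the commutator relations of $\Phi_{27}$, and show that the required value (the central element $\gamma$ or $\beta$) simply never appears in the image, no matter how the parameters are chosen. Either route ends by concluding $|Aut_c(G)| \leq |Inn(G)| = p^4$, whence $Aut_c(G) = Inn(G)$ and $Out_c(G) = 1$.

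The main obstacle I anticipate is purely bookkeeping: getting the precise commutator relations and the exponent-$p$ (versus higher exponent) behaviour of $\Phi_{27}(1^6)$ right, since in class $4$ the Jacobi/Hall--Witt corrections and the interaction of the extra commutator $\gamma = [\alpha_1,\alpha_2]$ with the lower central terms make the expansion of the left-hand side error-prone. Concretely, the delicate point is verifying that no admissible choice of the parameters $k_1, l_1$ and the $\gamma_2(G)$-component of the conjugator can hit the target central element, which is what forces the drop from $p^5$ to $p^4$; everything else is routine once the structure constants are pinned down.
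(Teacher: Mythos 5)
Your primary route --- the quotient trick from the $\Phi_{23}(1^6)$ proof --- is structurally unavailable for $\Phi_{27}(1^6)$, and this is the genuine gap in the proposal. In $\Phi_{23}(1^6)$ the center has order $p^2$, generated by $\alpha_4$ and $\gamma$, so one can quotient by the proper central subgroup $H = \gen{\alpha_4}$ while the displacement $\gamma$ of the candidate automorphism survives in $G/H$, where it can be confronted with $Aut_c(G/H) = Inn(G/H)$. In $\Phi_{27}(1^6)$, however, $Z(G) = \gen{\alpha_4}$ has order $p$, so the only nontrivial central quotient is $G/Z(G)$; and any candidate automorphism realizing the one-factor-of-$p$ slack is central (the paper's is $\alpha \mapsto \alpha$, $\alpha_1 \mapsto \alpha_1$, $\beta \mapsto \beta\alpha_4$), hence induces the \emph{identity} on $G/Z(G)$ and yields no contradiction there. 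The paper accordingly uses what you list only as a fallback: the direct commutator computation.

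Two further points need repair. First, your structural data is wrong in a way that matters: $\Phi_{27}(1^6)$ is minimally generated by \emph{three} elements $\alpha, \alpha_1, \beta$, with $\gamma_2(G)$ generated by $\alpha_2, \alpha_3$ and $\alpha_4 = [\alpha_3,\alpha] = [\alpha_1,\beta] = [\alpha_1,\alpha_2]$; the bounds $|\alpha^G| \le p^3$, $|\alpha_1^G| \le p^2$, $|\beta^G| = p$ give $|Aut_c(G)| \le p^6$ against $|Inn(G)| = p^5$. Under your assumed $2$-generation, Lemma \ref{lem6} would already give $|Aut_c(G)| \le p^3 \cdot p^2 = p^5 = |Inn(G)|$ and there would be nothing left to prove --- a sign the setup is off, and your proposed conjugators $\eta_1\alpha^{k_1}\alpha_1^{l_1}$ omit the generator $\beta$ that the candidate automorphism actually moves. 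Second, the decisive step is not ``purely bookkeeping'': since $\delta$ sends each \emph{generator} to a conjugate by construction, no expansion tested against generators can fail; the contradiction must be extracted from a product element. The paper tests $\alpha_2\beta^{-1}$ (noting $\delta(\alpha_2) = \alpha_2$, so $\delta(\alpha_2\beta^{-1}) = \alpha_2\beta^{-1}\alpha_4^{-1}$), uses $C_G(\alpha_2\beta^{-1}) = \gen{\alpha_1, \alpha_2, \alpha_3, \alpha_4, \beta}$ to reduce the conjugator to $g = \alpha^{k_1}$, and computes $[\alpha_2\beta^{-1}, \alpha^{k_1}] = \alpha_3^{k_1}\alpha_4^{k_1(k_1-1)/2}$, which can never equal $\alpha_4^{-1}$. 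Choosing such a test element and exploiting its centralizer is the core idea your proposal leaves unspecified.
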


\begin{proof}
 The group $G$ is a  $p$-group of class $4$, minimally generated by $\alpha, \alpha_1, \beta$. The commutator subgroup $\gamma_2(G)$ is abelian and generated by $\alpha_{i+1} := [\alpha_i, \alpha]$ for $1 \leq i \leq 2$ and $\alpha_4 := [\alpha_3, \alpha] = [\alpha_1, \beta] = [\alpha_1, \alpha_2]$. The center $Z(G)$ is of order $p$, generated by $\alpha_4$. It is easy to check that 
 $|\alpha^{G}| \leq p^3, |\alpha_1^{G}| \leq p^2$ and $|\beta^{G}| = p$. Therefore $|Aut_c(G)| \leq p^6$. Define
 $\delta :\{\alpha, \alpha_1, \beta\} \rightarrow G$ such that $\delta(\alpha)= \alpha, \delta(\alpha_1) = \alpha_1$ and $\delta(\beta)= \alpha_1\beta\alpha_1^{-1} =
 \beta\alpha_4$. Suppose that $|Aut_c(G)| = p^6$. Then $\delta$ extends to a class-preserving automorphism of $G$. Also note that $\delta$ fixes $\alpha_2$, therefore
 $\delta(\alpha_2\beta^{-1}) = \alpha_2\beta^{-1}\alpha_4^{-1}$. Since $\delta$ is a class-preserving automorphism, there exist some $g \in  G$ such that
 $[\alpha_2\beta^{-1}, g] = \alpha_4^{-1}$. Note that $C_G(\alpha_2\beta^{-1}) = \gen{\alpha_1, \alpha_2, \alpha_3, \alpha_4, \beta}$. Therefore without loss of generality we can assume
 that $g = \alpha^{k_1}$. It can be calculated that 
 \[[\alpha_2\beta^{-1}, \alpha^{k_1}] = \alpha_3^{k_1}\alpha_4^{k_1(k_1-1)/2},\]
 which, for any value of $k_1$, can never be equal to $\alpha_4^{-1}$. Hence, we get a contradiction. Therefore $|Aut_c(G)| \leq p^5$. But 
 $|Inn(G)| = p^5$, therefore $Out_c(G) = 1$.    \hfill $\Box$

\end{proof}

\begin{lemma}
Let $G \in \{\Phi_{28}(222), \Phi_{29}(222)\}$. Then $|Out_c(G)| = 1$.
\end{lemma}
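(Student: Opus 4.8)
The plan is to handle both groups by the scheme already used for $\Phi_{23}(1^6)$ and $\Phi_{27}(1^6)$: pin down the internal structure of $G$, bound $|Aut_c(G)|$ from above via Lemma \ref{lem6}, compare this bound with $|Inn(G)| = |G/Z(G)|$, and then kill off any excess factor of $p$ by exhibiting a candidate map that cannot be class-preserving. First I would record, for each of $\Phi_{28}(222)$ and $\Phi_{29}(222)$, a minimal generating set $\alpha_1, \ldots, \alpha_d$, generators for $\gamma_2(G)$ together with its exponent, the center $Z(G)$, and the nilpotency class; from these I read off $|Inn(G)| = |G:Z(G)|$. Next, for each generator $\alpha_i$ I bound the conjugacy class size $|\alpha_i^G| = |[\alpha_i, G]|$ by inspecting which commutators $[\alpha_i, g]$ can occur, and multiply these bounds. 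Invoking Lemma \ref{lem6} then gives an upper bound for $|Aut_c(G)|$.

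If this upper bound already equals $|Inn(G)|$, we are done immediately. Otherwise the main work is to rule out the remaining factor. For this I would write down an explicit candidate map $\delta$ on the generators, typically sending one generator $\alpha_i$ to a fixed nontrivial conjugate $\alpha_i[\alpha_i, \alpha_j]$ and fixing the others, promoting it to a central automorphism by Lemma \ref{lemay} when it is central. Assuming $\delta$ (or, more generally, that the full upper bound) were realized by a class-preserving automorphism, I would single out an element $g$ whose prescribed image forces a commutator equation $[g, x] = z$ for a specific nontrivial $z \in Z(G)$. Since $C_G(g)$ is large, the test element $x$ may be taken in a single cyclic factor, say $x = \alpha^{k}$; a short commutator computation then shows that the left-hand side runs through a one-parameter family that never equals $z$. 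This contradiction forces $|Aut_c(G)| \leq |Inn(G)|$, and hence $Out_c(G) = 1$.

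The main obstacle, exactly as in the preceding lemmas, will be the commutator bookkeeping: correctly identifying $C_G(g)$ so as to restrict $x$ to a one-parameter family, and then verifying that the resulting one-variable expression in the structure constants of $G$ cannot hit the target central element. Once the argument is set up for $\Phi_{28}(222)$, the case $\Phi_{29}(222)$ should follow on the same lines, since the two groups share the same $(222)$-type structure and differ only in the precise relations among the $\alpha_i$; as a shortcut, should either group turn out to possess an abelian normal subgroup with cyclic quotient, Lemma \ref{lem4} applies directly and the conclusion is immediate.
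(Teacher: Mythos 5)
Your framework is the right toolbox, but the step you postpone as ``commutator bookkeeping'' is precisely where this lemma is won, and your stated method for that step falls one factor of $p$ short. The paper's proof has no candidate-map stage at all: it closes both groups purely on Lemma \ref{lem6}, via $|Aut_c(G)| \le |\alpha^G|\,|\alpha_1^G| \le p^3 \cdot p^2 = p^5 = |G/Z(G)|$. The nontrivial input is the bound $|\alpha_1^G| \le p^2$. Merely inspecting which commutators $[\alpha_1, g]$ can occur --- your proposed technique --- yields only $|\alpha_1^G| \le p^3$ (conjugates of the form $\alpha_1\alpha_2^k\alpha_3^{\ast}\alpha_4^{\ast}$ with $k$ effectively ranging modulo $p^2$), hence a total bound of $p^6 > |Inn(G)|$. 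What the paper actually does is enlarge $C_G(\alpha_1)$ by an element of \emph{mixed} form, exploiting the power relations that distinguish these $(222)$-groups from the $(1^6)$-groups you are modelling on: from the relation $\alpha_2^{(p)} = \alpha_4^y$ one computes $[\alpha_1, \alpha^p] = \alpha_2^p\alpha_3^{p(p-1)/2}\alpha_4^{\sum_{n=1}^{p-2} n(n-1)/2} = \alpha_2^p\alpha_4^{(p-2)(p-1)p/3}$, and comparison with $[\alpha_1, \alpha_2^t] = \alpha_4^t$ shows that $\alpha_2^{y}\alpha^{-p}$ centralizes $\alpha_1$ when $p > 3$, while for $p = 3$ the correction terms hidden in James' $\alpha_2^{(p)}$ notation shift this to $\alpha_2^{y+1}\alpha^{-p}$. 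This gives $|C_G(\alpha_1)| \ge p^4$, i.e.\ $|\alpha_1^G| \le p^2$. Nothing in your plan looks for centralizing elements combining a power of a generator with an element of $\gamma_2(G)$, and your uniform treatment would also miss the $p = 3$ versus $p > 3$ dichotomy that this relation forces.

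Your fallback is legitimate in principle: since $Aut_c(G)$ embeds into the product of the generators' conjugacy classes, attaining the naive bound $p^6$ would force every tuple of conjugates to be realized by a class-preserving automorphism, so excluding one well-chosen tuple gives $|Aut_c(G)| \le p^5$ --- exactly the device used for $\Phi_{23}(1^6)$ and $\Phi_{27}(1^6)$. But you neither specify a candidate map nor carry out the exclusion, and in these groups any such verification runs through the same $\alpha_2^p$--$\alpha_4$ arithmetic you have deferred, so as written the proposal is a strategy outline with its decisive computation missing. Finally, the Lemma \ref{lem4} shortcut you hedge on is unavailable: $G$ is minimally $2$-generated, so $G/\gamma_2(G) \cong C_p \times C_p$, and any normal $H$ with $G/H$ cyclic must contain $\gamma_2(G)$ and be maximal; every such $H$ contains $\alpha_2$ together with some element congruent to $\alpha^a\alpha_1^b$ modulo $\gamma_2(G)$ with $(a,b) \not\equiv (0,0) \pmod{p}$, whose commutator with $\alpha_2$ has a nontrivial $\alpha_3$- or $\alpha_4$-component, so no such $H$ is abelian.
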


\begin{proof}
 The group $G$ is a $p$-group of class $4$, minimally generated by $\alpha$ and $\alpha_1$. The commutator subgroup $\gamma_2(G)$ is abelian and generated by $\alpha_{i+1} := [\alpha_i, \alpha]$ for $1 \leq i \leq 2$ and $\alpha_4 := [\alpha_3, \alpha] =  [\alpha_1, \alpha_2]$. The center $Z(G)$ is of order $p$, generated by $\alpha_4$. Note that $\gen{\alpha, \alpha_4} \leq C_G(\alpha)$  and hence $|C_G(\alpha)| \geq p^3$, therefore 
 $|\alpha^G| \leq p^3$. Now note that, since $\alpha_2^{(p)} = \alpha_4^y$, we have for $p =3, \alpha_2^p = \alpha_4^{y-1}$, and for $ p > 3, \alpha_2^p = \alpha_4^{y}$.
 Following is a routine calculation. 
 \[[\alpha_1, \alpha^p] = \alpha_2^p\alpha_3^{p(p-1)/2}\alpha_4^{\sum\limits_{n=1}^{p-2} n(n-1)/2} = \alpha_2^p\alpha_4^{(p-2)(p-1)p/3},\]
 which, for $p=3$, equals $\alpha_4^{y+1}$, and for $p > 3$, equals $\alpha_4^y$. But we have $[\alpha_1, \alpha_2^t] = \alpha_4^t$, therefore 
 $\alpha_2^{y+1}\alpha^{-p} \in C_G(\alpha_1)$ for $p=3$ and  $\alpha_2^{y}\alpha^{-p} \in C_G(\alpha_1)$ for $p>3$. Now it is easy to see that
 $|C_G(\alpha_1)| \geq p^4$. Hence $|\alpha_1^G| \leq p^2$. It follows from Lemma \ref{lem6} that $|Aut_c(G)| \leq p^5$. But $|G/Z(G)| = p^5$, therefore
  $Out_c(G) = 1$.    \hfill $\Box$

\end{proof}

\begin{lemma}
Let $G \in \{\Phi_{k}(1^6), \Phi_{34}(321)a \mid \ k = 31, \ldots, 33\}$. Then $Out_c(G) = 1$.
\end{lemma}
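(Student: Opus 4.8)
The plan is to follow the same template that the preceding lemmas in this section establish: for each group in the list, fix an explicit minimal generating set and the structure of $\gamma_2(G)$ and $Z(G)$, bound $|Aut_c(G)|$ from above via Lemma~\ref{lem6} by computing the sizes of the conjugacy classes $|\alpha_i^G|$ of the generators, and then show this upper bound coincides with $|Inn(G)| = |G/Z(G)|$, forcing $Aut_c(G) = Inn(G)$ and hence $Out_c(G) = 1$. For each family $\Phi_{31},\ldots,\Phi_{33}$ and the group $\Phi_{34}(321)a$, I would first read off from James' classification \cite{RJ} the class, the generators, and the commutator relations, identify $Z(G)$, and thereby compute $|G/Z(G)|$; this is the target value for $|Aut_c(G)|$.

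First I would compute, for each generator $\alpha_i$, a lower bound on $|C_G(\alpha_i)|$ by exhibiting explicit elements commuting with $\alpha_i$ (as in the $\Phi_{28},\Phi_{29}$ proof, where relations like $\alpha_2^{p} = \alpha_4^{y}$ are used to produce a nontrivial centralizing element), giving an upper bound $|\alpha_i^G| = |G|/|C_G(\alpha_i)|$. Multiplying these over a minimal generating set yields $|Aut_c(G)| \le \prod_i |\alpha_i^G|$. When this product already equals $|Inn(G)|$, we are done immediately, as in the $\Phi_{28},\Phi_{29}$ case. When the naive product exceeds $|Inn(G)|$ by a factor of $p$, I expect to need the refinement seen in the $\Phi_{17},\Phi_{19},\Phi_{23},\Phi_{27}$ proofs: assume for contradiction that $|Aut_c(G)|$ attains the larger bound, construct a specific candidate map $\delta$ sending one generator to a conjugate (e.g. $\alpha_1 \mapsto \alpha_2^{-1}\alpha_1\alpha_2$) and the others to themselves, and then show that class-preservation would force the existence of $g \in G$ with $[\,\cdot\,, g]$ equal to a specific central element; a direct commutator computation (often reducing to centralizer considerations so that $g$ may be taken of the form $\alpha^{k}$) then shows no such $g$ exists, yielding the contradiction and dropping the bound to $|Inn(G)|$.

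The main obstacle will be the case of $\Phi_{34}(321)a$, and more generally any family where the several groups listed are not uniformly of the same nilpotency class or have $Z(G)$ of order $p^2$ rather than $p$; there the bookkeeping of commutator relations is heaviest and the candidate-map-plus-commutator-contradiction argument must be set up carefully so that the centralizer of the relevant element is identified correctly (to legitimately restrict the conjugating element $g$ to a one-parameter family). For groups with $Z(G)$ of order $p^2$, I may alternatively quotient by a central subgroup $H = \gen{z}$ of order $p$ and appeal to the known structure of $Aut_c(G/H)$ via an earlier result of \cite{MKY}, exactly as in the $\Phi_{23}$ proof, where $G/H \in \Phi_6$ and $Aut_c(G/H) = Inn(G/H)$ is used to constrain any putative noninner class-preserving automorphism. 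In each instance the conclusion is the same: the upper bound on $|Aut_c(G)|$ collapses to $|Inn(G)|$, so $Out_c(G) = 1$.
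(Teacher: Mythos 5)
Your proposal follows essentially the same route as the paper's proof: the paper bounds $|Aut_c(G)| \leq p^6$ via Lemma~\ref{lem6} from the generator class sizes, then rules out equality exactly as you predict, by supposing the candidate map $\alpha \mapsto \alpha$, $\alpha_1 \mapsto \alpha_1$, $\alpha_2 \mapsto \alpha^{-1}\alpha_2\alpha = \alpha_2\beta_2$ extends to a class-preserving automorphism and showing the required equation $[\alpha_1\alpha_2, \ \eta_1\alpha^{k_1}\alpha_1^{l_1}\alpha_2^{m_1}] = \beta_2$ is unsolvable (the left side expands to $\beta_1^{k_1}\beta_2^{k_1}\gamma^{a}$, so killing $\beta_1$ also kills $\beta_2$), which collapses the bound to $|Inn(G)| = p^5$. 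Two minor corrections to your hedging: all four groups are uniformly of class $3$ with $|Z(G)| = p$, so no quotient argument in the style of $\Phi_{23}$ is needed, and the contradiction is obtained by testing class-preservation on the product element $\alpha_1\alpha_2$ with target value $\beta_2$, which is \emph{not} central -- so the step is not of the form ``realize a central element as a commutator,'' though the overall mechanism is the one you describe.
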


\begin{proof}
  The group $G$ is a  $p$-group of class $3$, minimally generated by $\alpha, \alpha_1$ and $\alpha_2$. 
 The commutator subgroup $\gamma_2(G)$ is abelian and generated by $\beta_i := [\alpha_i, \alpha]$ for $i = 1, 2$ and $\gamma := [\alpha_1, \beta_1]$.  For $k = 31, 32$, $[\alpha_2, \beta_2] = \gamma^y$, and for $k = 33$ and $\Phi_{34}(321)a$, $\gamma = [\beta_2, \alpha]$. The center $Z(G)$ is of order $p$, generated by $\gamma$. It is easy to see that, if $G \in \{\Phi_{k}(1^6)| \ k = 31, 32\}$, then
 $|\alpha_1^{G}| \leq p^2, |\alpha_2^{G}| \leq p^2, |\alpha^{G}| \leq p^2$, and if $G \in \{\Phi_{33}(1^6), \Phi_{34}(321)a\}$, then
 $|\alpha_1^{G}| \leq p^2, |\alpha_2^{G}| = p$ and $|\alpha^{G}| \leq p^3$.
 Therefore for all the four groups $G$, $|Aut_c(G)| \leq p^6$. Define a map $\delta : \{\alpha, \alpha_1, \alpha_2\} \rightarrow G$ such that
 $\alpha \mapsto \alpha, \alpha_1 \mapsto \alpha_1$ and $\alpha_2 \mapsto \alpha^{-1}\alpha_2\alpha = \alpha_2\beta_2$. Suppose that $|Aut_c(G)| = p^6$. Then $\delta$
 must extend to a class-preserving automorphism of $G$. Hence there exist elements $\eta_1$
 $\in \gamma_2(G)$ and $k_1, l_1, m_1 \in \mathbb{Z}$ such that 
 $[\alpha_1\alpha_2, \ \eta_1\alpha^{k_1}\alpha_1^{l_1}\alpha_2^{m_1}] = \beta_2$, but it is a routine calculation that
\[[\alpha_1\alpha_2, \ \eta_1\alpha^{k_1}\alpha_1^{l_1}\alpha_2^{m_1}] = \beta_1^{k_1}\beta_2^{k_1}\gamma^{a}\]
for some $a \in \mathbb{Z}$. Clearly it can not be equal to $\beta_2$ for any value of $k_1, l_1, m_1, r_1$ and $s_1$. 
Thus $\delta$ is not a class-preserving automorphism, and therefore  it follows that $|Aut_c(G)| \leq p^5$. But 
$|Inn(G)| = p^5$.  Hence $Aut_c(G) = Inn(G)$ proving that $Out_c(G) = 1$.   \hfill $\Box$

\end{proof}

 \begin{lemma}
 Let $G \in \{\Phi_{k}(1^6),  \Phi_{j}(222)a_0 \mid  k = 40, 41, \; j = 42,  43 \}$. Then $Out_c(G) = 1$.
\end{lemma}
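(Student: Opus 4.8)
The plan is to follow the same template used in the preceding lemmas of this section, since these groups $\Phi_k(1^6)$ and $\Phi_j(222)a_0$ for $k=40,41$, $j=42,43$ are again $p$-groups of small nilpotency class whose structure (generators, commutator subgroup, center) is tabulated in James \cite{RJ}. First I would record, for each of the four representative groups, a minimal generating set and the generators of $\gamma_2(G)$ and $Z(G)$, reading these off from the classification; these are nilpotency-class-$3$ or class-$4$ groups of order $p^6$ with $|Z(G)|$ equal to $p$ (forcing $|Inn(G)|=p^5$) or possibly $p^2$. The goal is always to show $|Aut_c(G)| = |Inn(G)|$, whence $Out_c(G)=1$.

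Next, for each group I would bound the conjugacy-class sizes $|\alpha_i^G| = |[\alpha_i,G]|$ of the chosen minimal generators by locating each $[\alpha_i,G]$ inside $\gamma_2(G)$ and using the commutator relations to count how many distinct commutators $[\alpha_i, g]$ arise. Feeding these bounds into Lemma \ref{lem6} gives $|Aut_c(G)| \le \prod_i |\alpha_i^G|$, which here should come out to $p^{|Inn(G)|+1}$, i.e. one power of $p$ above $|Inn(G)|$. This is the routine computational part; I would not grind through the commutator arithmetic but simply invoke the structure constants from the James tables.

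The crux is then to rule out that extra factor of $p$. To do this I would, exactly as in the $\Phi_{27}(1^6)$ and $\Phi_{31}$--$\Phi_{34}$ lemmas above, exhibit a specific candidate central map $\delta$ that sends one generator to a nontrivial conjugate of itself (say $\delta(\alpha_i)=\alpha_i[\alpha_i,\alpha_j]$) and fixes the others, and show this single candidate \emph{cannot} be class-preserving. The mechanism is to pick an element $x$ (typically of the form $\alpha_i\alpha_j^{\pm 1}$ or a generator times a commutator) on which $\delta$ acts by multiplication by a fixed central commutator, compute its centralizer $C_G(x)$ to restrict the possible conjugating elements $g$, and then verify by direct commutator computation that $[x,g]$ never equals the required central element for any $g$. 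This contradiction forces $|Aut_c(G)| \le |Inn(G)|$.

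The main obstacle I expect is the case bookkeeping: the four groups have genuinely different commutator structures (the class-$3$ groups $\Phi_{40},\Phi_{41}$ versus the class-$4$ families $\Phi_{42},\Phi_{43}$, and the two parametrized families $\Phi_j(222)a_0$ may require separate sub-arguments depending on the parameter), so the single clean killing computation that works for one representative may need to be repeated with a different choice of $x$ and $g$ for the others. Where a direct commutator computation becomes unwieldy, I would instead pass to a suitable central quotient $G/H$ with $H=\gen{z}$ for a central $z$, identify the quotient's isoclinism family, invoke the already-established triviality of $Out_c(G/H)$ (as in the $\Phi_{23}(1^6)$ proof), and lift the contradiction back to $G$ via the fact that the induced automorphism must be inner hence central and therefore realized by an element of $Z_2(G/H)$. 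Assembling these per-group contradictions yields $|Aut_c(G)|=|Inn(G)|$ in every case, so $Out_c(G)=1$.
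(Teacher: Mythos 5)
Your proposal takes essentially the same route as the paper: bound $|Aut_c(G)| \leq p^6$ via Lemma \ref{lem6} using $|\alpha_1^G|, |\alpha_2^G| \leq p^3$, then eliminate the extra factor of $p$ over $|Inn(G)| = p^5$ by exhibiting one candidate map (the paper uses $\alpha_1 \mapsto \alpha_1$, $\alpha_2 \mapsto \alpha_2\beta_2$) and showing by direct commutator computation on the test element $\alpha_1\alpha_2$ that no $\eta_1\alpha_1^{k_1}\alpha_2^{l_1}$ yields the required commutator $\beta_2$, exactly the contradiction mechanism you describe. The case bookkeeping you anticipate turns out to be unnecessary: all four groups are class-$4$ and $2$-generated with $|Z(G)| = p$ generated by $\gamma$, differing only in the relation defining $\gamma$, so the paper runs a single uniform computation in which the exponent of $\gamma$ is absorbed into an unspecified integer $a$, and no central-quotient fallback in the style of $\Phi_{23}(1^6)$ is needed.
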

\begin{proof}
 The group $G$ is a  $p$-group of class $4$, minimally generated by $\alpha_1$ and $\alpha_2$. The commutator subgroup $\gamma_2(G)$ is abelian and generated by 
 $\beta := [\alpha_1, \alpha_2], \beta_i := [\beta, \alpha_i]$ for $i = 1,2$ and $\gamma$,
 where, for $k = 40$, $\gamma := [\beta_1, \alpha_2] = [\beta_2, \alpha_1]$, for $k = 41$, $\gamma^{-\nu} := [\alpha_2, \beta_2] = [\alpha_1, \beta_1]^{-\nu}$, for $j = 42$, $\gamma := [\alpha_1, \beta_2] = [\alpha_2, \beta_1]$ and for $j = 43$, $\gamma^{-\nu} := [\alpha_2, \beta_2] = [\alpha_1, \beta_1]^{-\nu}$. The center $Z(G)$ is of order $p$, generated by $\gamma$. It is easy to see that $|\alpha_1^{G}| \leq p^3$ and $|\alpha_2^{G}| \leq p^3$. 
 Therefore $|Aut_c(G)| \leq p^6$. Define a map $\delta : \{\alpha_1, \alpha_2\} \rightarrow G$ such that
 $\alpha_1 \mapsto \alpha_1$ and $\alpha_2 \mapsto \alpha_2\beta_2$. Suppose that $|Aut_c(G)| = p^6$. Then $\delta$
 must extend to a class-preserving automorphism of $G$. Hence there exist  $\eta_1 (= \beta^{r_1}\beta_1^{s_1}\beta_2^{t_1}\gamma^{u_1}$ say) 
 $\in \gamma_2(G)$ and $k_1, l_1 \in \mathbb{Z}$ such that 
 $[\alpha_1\alpha_2, \ \eta_1\alpha_1^{k_1}\alpha_2^{l_1}] = \beta_2$. It is a routine calculation to show that,
 \begin{eqnarray*}
 [\alpha_1\alpha_2, \ \eta_1\alpha_1^{k_1}\alpha_2^{l_1}] &=& \beta^{l_1-k_1}\beta_1^{-k_1(k_1-1)/2 - r_1}\beta_2^{l_1(l_1 + 1)/2 - k_1l_1 - r_1}\gamma^{a}
 \end{eqnarray*}
for some $a \in \mathbb{Z}$. It is easy to see that if powers of $\beta$ and $\beta_1$ in the above expression are $0$ modulo $p$, then the power of $\beta_2$ is also $0$ modulo $p$. 
It follows that $\delta$ is not a class-preserving automorphism, and therefore $|Aut_c(G)| \leq p^5$. But $|Inn(G)| = p^5$, therefore $Out_c(G) = 1$.   \hfill $\Box$

\end{proof}

We are now  ready to prove the following theorem. 

\begin{thm} \label{prop1}
 Let $G$ be a group of order $p^6$ which belongs to one of the isoclinism family 
 $\Phi_k$ for $k = 2, \ldots,6,8,9,11,12,14,16, 17,19,23,25, \ldots,29, 31, \ldots,35,37, 40, \ldots,43,$ \cite[Section 4.6]{RJ}. Then $Out_c(G) = 1$.
\end{thm}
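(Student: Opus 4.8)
The plan is to reduce the assertion to a finite check on a single representative of each listed family, split those representatives into the ones already settled above and a structurally simple remainder, and dispose of the remainder by one uniform criterion.

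First I would invoke isoclinism invariance. By \cite[Theorem 4.1]{MKY} the isomorphism type of $Aut_c(G)$ depends only on the isoclinism family of $G$; moreover $Inn(G)\cong G/Z(G)$, and $G/Z(G)$ is itself an isoclinism invariant, so $|Inn(G)|$ is constant across each family. Hence $|Out_c(G)|=|Aut_c(G)|/|G/Z(G)|$ is constant on each family, and the condition $Out_c(G)=1$ (equivalently $|Aut_c(G)|=|Inn(G)|$) is family-invariant. It therefore suffices to verify $Out_c(G)=1$ for one convenient representative from each family $\Phi_k$ in the statement. For $k\in\{11,17,19,23,27,28,29,31,32,33,34,40,41,42,43\}$ the representatives $\Phi_{11}(1^6),\Phi_{17}(1^6),\Phi_{19}(1^6),\Phi_{23}(1^6),\Phi_{27}(1^6),\Phi_{28}(222),\Phi_{29}(222),\ldots,\Phi_{43}(222)a_0$ are exactly the groups treated in the preceding lemmas, so those families are already done. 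It remains to handle $\Phi_k$ for $k\in\{2,3,4,5,6,8,9,12,14,16,25,26,35,37\}$.

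For each of these remaining families my plan is to apply Lemma \ref{lem4}. I would read off James's presentation \cite{RJ} and exhibit an abelian normal subgroup $H\normal G$ with $G/H$ cyclic. Concretely, for the low-class families one expects a maximal subgroup of index $p$ --- typically $\gen{\alpha_1,\gamma_2(G),Z(G)}$ for a suitable generator $\alpha_1$ --- to be abelian, which immediately produces the cyclic quotient $G/H\cong C_p$; for the higher-class families in the list the same presentation should expose an abelian normal subgroup containing $\gamma_2(G)$ over which $G$ is cyclic. In every such case Lemma \ref{lem4} yields $Out_c(G)=1$ at once, with no commutator computation required.

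The main obstacle will be the case-by-case verification that an abelian normal $H$ with cyclic quotient really exists for all fourteen remaining families: one must descend into James's defining relations, confirm commutativity and normality of the candidate $H$, and check that $G/H$ is generated by a single coset. The delicate point is that this structural criterion is precisely what fails both for the trivial-$Out_c$ families handled instead by explicit lemmas (e.g. $\Phi_{23}$, $\Phi_{27}$) and for the families of Theorem A with nontrivial $Out_c$; so I must make sure the partition is correct and that no listed family secretly lacks an abelian subgroup of the required index. Where Lemma \ref{lem4} does not apply directly, I would fall back on the method of the lemmas above --- bounding $|Aut_c(G)|\le\prod_i|\alpha_i^{G}|$ via Lemma \ref{lem6} and ruling out the extremal case by a single commutator identity, possibly after passing to a quotient as in the $\Phi_{23}(1^6)$ argument.
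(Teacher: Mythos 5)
Your opening reduction (isoclinism invariance via \cite[Theorem 4.1]{MKY}, one representative per family) and your roster of families already settled by the preceding lemmas both match the paper exactly. The gap is in your main plan for the fourteen remaining families: Lemma \ref{lem4} provably cannot work for all of them, because the families $\Phi_5$ and $\Phi_{12}$ contain \emph{no} abelian-by-cyclic group at all. Indeed, suppose $H \normaleq G$ is abelian with $G/H$ cyclic, and write $G = H\gen{x}$. Since $H$ is abelian and normal, $h \mapsto [h,x]$ is a homomorphism from $H$ into $\gamma_2(G)$ with kernel $C_H(x) = H \cap Z(G)$, so $|HZ(G)/Z(G)| \leq |\gamma_2(G)|$; moreover $G/HZ(G)$ is simultaneously cyclic (a quotient of $G/H$) and of exponent $p$ whenever $G/Z(G)$ is elementary abelian, hence of order at most $p$. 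For $\Phi_5$ (isoclinic to the extraspecial group of order $p^5$) and $\Phi_{12}$ (isoclinic to the direct product of two nonabelian groups of order $p^3$) one has $G/Z(G) \cong C_p^4$ and $|\gamma_2(G)| = p$, resp.\ $p^2$ --- both isoclinism invariants --- giving $p^4 = |G/Z(G)| \leq p\cdot|\gamma_2(G)| \leq p^3$, a contradiction. So for these families you land on your fallback from the start. The fallback does succeed (e.g.\ for $\Phi_{12}(1^6)$, a product of two groups of order $p^3$, the four minimal generators each have conjugacy class of size $p$, so Lemma \ref{lem6} gives $|Aut_c(G)| \leq p^4 = |Inn(G)|$, and similarly for $\Phi_5(1^6)$ with class sizes $p,p,p,p,1$), but as written your proposal asserts the abelian-by-cyclic structure ``for the low-class families'' with no criterion for when it fails, and that partition is precisely where the work lies.

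For comparison, the paper's actual disposal of the remainder is mostly different from yours: for $k = 2,\ldots,6,8,9$ it writes $\Phi_k(1^6) = \Phi_k(1^5)\times C_p$ (resp.\ $\Phi_8(321)a = \Phi_8(32)\times C_p$) and uses $Aut_c(H\times K) \cong Aut_c(H)\times Aut_c(K)$ together with the order-$p^5$ results of \cite[Theorem 5.5]{MKY}; $\Phi_{12}(1^6)$ is killed as a direct product of groups of order $p^3$; $\Phi_{14}(1^6)$ is class $2$ with cyclic commutator subgroup, so \cite[Corollary 3.6]{MKY} applies; Lemma \ref{lem4} is invoked only for $\Phi_{16}(1^6)$ and $\Phi_{25}(222)$, $\Phi_{26}(222)$; and $\Phi_{35}$, $\Phi_{37}$ are handled by the Lemma \ref{lem6} count, as you anticipated (the paper's proof also covers $\Phi_{22}$ by that same count, though $22$ is absent from the statement's list). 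So your skeleton is correct and every family is in principle reachable by tools you name, but your primary mechanism fails for identifiable families, you omit the direct-product reduction that carries seven of the fourteen cases, and repairing the plan amounts to redoing the paper's case-by-case analysis rather than executing a uniform criterion.
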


In the following proof, $\Phi_{k}(1^5)$ is the group of order $p^5$ from the isoclinism family $(k)$ of \cite[Section 4.5]{RJ} and $\Phi_{8}(32)$ is the group of order $p^5$ from the isoclinism family $(8)$ of \cite[Section 4.5]{RJ}.\\

\begin{proof}  
Note that $Aut_c(H \times K) \cong Aut_c(H) \times Aut_c(K)$, for any two groups $H$ and $K$.
Let $G \ \in \ \{\Phi_k(1^6), \ \Phi_8(321)a, \ | \ k = 2, \ldots,6,9\}$, then, since $\Phi_k(1^6) = \Phi_k(1^5) \times C_p$ and $\Phi_8(321)a = \Phi_8(32) \times C_p$, it follows from \cite[Theorem 5.5]{MKY} that  $Out_c(G) = 1$. Since the group
$\Phi_{12}(1^6)$ is a direct product of groups of order $p^3$, $Out_c(\Phi_{12}(1^6)) = 1$. The group $\Phi_{14}(1^6)$ is of nilpotency class 2 and $\gamma_2(\Phi_{14}(1^6))$ is cyclic, therefore from \cite[Corollary 3.6]{MKY}, we have $Out_c(\Phi_{14}(1^6)) = 1$.
Note that if $G \in \{\Phi_{16}(1^6),   \Phi_{k}(222) \mid k =  25, 26\}$, then $G$ is abelian by cyclic.  Hence by Lemma \ref{lem4}  $Out_c(G) = 1$. If $G \in \{\Phi_{k}(1^6) \mid k = 22, 35, 37\}$, then by Lemma \ref{lem6} it follows that $|Aut_c(G)| \le p^5$. Since $|Inn(G)| = p^5$, $Out_c(G) = 1$. This, along with lemmas 3.1 - 3.7, completes the proof of Theorem \ref{prop1}.        \hfill $\Box$

\end{proof}

\section{Groups $G$ with non-trivial $Out_c(G)$}
In this section we deal with those groups for which there exist a non-inner class-preserving automorphism.

\begin{lemma} \label{lem8}
 Let $G$ be the group $\Phi_{24}(1^6)$. 
Then $|Out_c(G)| = p$.
\end{lemma}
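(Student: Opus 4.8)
The strategy follows the template established in the preceding lemmas of Section~3, but now the goal is to exhibit a \emph{non-inner} class-preserving automorphism rather than to rule one out, so the proof has two halves: an upper bound $|Aut_c(G)| \le p^{?}$ via Lemma~\ref{lem6}, and a matching lower bound by explicitly constructing a class-preserving automorphism that is not inner. First I would record the structural data of $G = \Phi_{24}(1^6)$: its minimal generators, the structure of $\gamma_2(G)$, the center $Z(G)$, and the nilpotency class. From the James classification one reads off that $\Phi_{24}$ is a class-$3$ family; I would fix a presentation with generators $\alpha, \alpha_1, \alpha_2$ (or whatever the canonical generating set is), compute the commutators $\beta_i := [\alpha_i,\alpha]$ and the class-$3$ commutators, and identify $Z(G)$. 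With this in hand I would bound each conjugacy-class size $|\alpha_i^G| = |[\alpha_i, G]|$ by inspecting $[\alpha_i, G]$, and feed the product $\prod_i |\alpha_i^G|$ into Lemma~\ref{lem6} to get $|Aut_c(G)| \le p^{m}$ for the appropriate $m$. Since $Out_c(G) = Aut_c(G)/Inn(G)$ and $|Inn(G)| = |G/Z(G)|$ is known from the center computation, this upper bound combined with the target $|Out_c(G)| = p$ tells me exactly what $|Aut_c(G)|$ must be, namely $p \cdot |Inn(G)|$.

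For the lower bound I would define a specific map $\delta$ on the generators, sending one generator $\alpha_i \mapsto \alpha_i z$ for a suitable central element $z \in Z(G)$ (or $\alpha_i \mapsto \alpha_i \beta$ for a suitable commutator), and fixing the others. The task is then to verify that $\delta$ extends to an automorphism and is class-preserving, using the criterion spelled out at the end of Section~2: for every element $\eta_1 \prod \alpha_i^{k_i}$ one must produce a conjugating element making $[\,\cdot\,,\,\cdot\,]$ equal to the ``error term'' $(\eta_1\prod\alpha_i^{k_i})^{-1}\delta(\eta_1\prod\alpha_i^{k_i})$. Because $\delta$ alters the generators only by central elements, the error term lands in $Z(G) \cap \gamma_2(G)$, and I would reduce the verification to a finite system of congruences modulo $p$ in the exponents $k_i, l_i$, solvable because the relevant commutator map $[\,\cdot\,, G]$ surjects onto the required subgroup. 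This is essentially the converse computation to the ones in Lemmas 3.1--3.7, where such systems were shown to force $\delta$ to be inner; here instead the system should be \emph{consistently solvable}, yielding that $\delta \in Aut_c(G)$.

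Finally I would argue that $\delta$ is not inner. The cleanest way is to check that $\delta$ does not agree with conjugation by any element of $G$: conjugation by $g$ sends $\alpha_i \mapsto \alpha_i[\alpha_i,g]$, so $\delta$ inner would force the chosen central perturbation $z$ to lie in $[\alpha_i, G]$ simultaneously with the fixing conditions on the other generators, and I expect the presentation to obstruct this (for instance because the generator I perturbed has $z \notin [\alpha_i, G]$, or because no single $g$ can fix the untouched generators while producing $z$). This exhibits a nontrivial coset in $Out_c(G)$, giving $|Out_c(G)| \ge p$, which together with the upper bound yields $|Out_c(G)| = p$.

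I expect the main obstacle to be the bookkeeping in the second step: correctly computing the error term $\delta(\eta_1\prod\alpha_i^{k_i})$ using the class-$3$ commutator identities of Lemma~\ref{lem1} (the higher commutators $[\beta_i,\alpha_j]$ do not vanish, unlike the class-$2$ case $\Phi_{11}$), and then confirming that the resulting congruence system is solvable for \emph{all} choices of $k_i$ rather than just generically. A secondary subtlety is pinning down the exact exponent $m$ in the upper bound, since an over-generous estimate from Lemma~\ref{lem6} would leave a gap between the bounds; I would tighten $|\alpha_i^G|$ by exhibiting explicit elements of $C_G(\alpha_i)$ (as done for $\Phi_{28}(222)$ above) if the naive count overshoots $p \cdot |Inn(G)|$.
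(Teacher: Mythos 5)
Your two-sided bounding skeleton is right, and your lower-bound route---construct one explicit class-preserving automorphism, then show it is non-inner---is legitimate and would succeed here; it is in fact exactly how the paper handles $\Phi_{30}(1^6)$ and $\Phi_{39}(1^6)$. But for $\Phi_{24}(1^6)$ the paper does something cheaper: it verifies that $Z(G) \leq [g,G]$ for \emph{every} $g \in G \setminus \gamma_2(G)$ (three short commutator computations, $[g,\beta^{l_2}] = \alpha_4^{l_1l_2}$, $[g,\alpha_3^{k_2}] = \alpha_4^{k_1k_2}$, $[g,\alpha_1^{m_2}] = \alpha_4^{-m_1m_2}$, covering in turn the cases where $l_1$, $k_1$, $m_1$ is nonzero mod $p$), and then invokes Lemma \ref{lem3} together with the Adney--Yen count of Lemma \ref{lem2} to get
\[
|Aut_c(G)| \;\geq\; |Autcent(G)|\,|G|/|Z_2(G)| \;=\; p^3\cdot p^6/p^3 \;=\; p^6.
\]
Since Lemma \ref{lem6} gives $|Aut_c(G)| \leq p^3\cdot p^2\cdot p = p^6$ outright (your worry about needing to tighten the class-size estimates does not materialize; the naive count is already sharp), equality holds and $|Out_c(G)| = p^6/p^5 = p$. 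What the paper's counting buys, and what you should absorb, is that the entire third step of your plan---proving your chosen $\delta$ is not inner---becomes unnecessary: because $p^6 > p^5 = |Inn(G)|$, non-innerness of \emph{some} class-preserving automorphism is automatic, and all $p^3$ central automorphisms are swept into $Aut_c(G)$ at once rather than one at a time. Your approach, by contrast, produces a concrete witness, which is more informative but costs the extra $Z_2(G)$-innerness check.

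One caveat: as written your text is a plan with the decisive computations deferred (``I expect the presentation to obstruct this''), and your structural guesses are off---$\Phi_{24}(1^6)$ has nilpotency class $4$, not $3$, with minimal generators $\alpha, \alpha_1, \beta$, abelian $\gamma_2(G) = \gen{\alpha_2,\alpha_3,\alpha_4}$ where $\alpha_{i+1} = [\alpha_i,\alpha]$ and $\alpha_4 = [\alpha_3,\alpha] = [\alpha_1,\beta]$, and $Z(G) = \gen{\alpha_4}$ of order $p$ (so the central perturbations available to you are exactly powers of $\alpha_4$). These data must be fixed before your congruence bookkeeping can even be set up, so to count as a proof your outline still needs the commutator computations carried out in full.
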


\begin{proof}
The group $G$ is a  $p$-group of class $4$, minimally generated by $\alpha, \alpha_1$ and $\beta$. The commutator subgroup $\gamma_2(G)$ is abelian and generated by $\alpha_{i+1} := [\alpha_i, \alpha]$ for $i = 1,2$ and $\alpha_4 := [\alpha_3, \alpha] = [\alpha_1, \beta]$. The center $Z(G)$ is of order $p$, generated by $\alpha_4$.  
We will show that for every element $g \in G-\gamma_2(G), Z(G) \leq [g, G]$. Let $g = \eta_1\alpha^{k_1}\alpha_1^{l_1}\beta^{m_1}$.
Then \[[g, \beta^{l_2}] = [\eta_1\alpha^{k_1}\alpha_1^{l_1}\beta^{m_1}, \beta^{l_2}] = \alpha_4^{l_1l_2}.\]
Therefore, if $l_1$ is non-zero modulo $p$, we have $Z(G) \leq [g, G]$. Let $l_1 \equiv 0\pmod{p}$. Then, since $\alpha_1^p \in \gamma_2(G)$, 
\[[g, \alpha_3^{k_2}] = [\eta_1\alpha^{k_1}\alpha_1^{l_1}\beta^{m_1}, \alpha_3^{k_2}] = \alpha_4^{k_1k_2}.\]
Therefore, if $k_1$ is non-zero modulo $p$, $Z(G) \leq [g, G]$. Now let $k_1 \equiv 0\pmod{p}$. Then
\[[g, \alpha_1^{m_2}] = [ \eta_1\beta^{m_1}, \alpha_1^{m_2}] = \alpha_4^{-m_1m_2}.\]
Therefore, if $m_1$ is non-zero modulo $p$, $Z(G) \leq [g, G]$.
It follows that for every $g \in G-\gamma_2(G), Z(G) \leq [g, G]$. Hence by lemmas \ref{lem2} and \ref{lem3} we have, 
\[|Aut_c(G)| \geq |Autcent(G)||G|/|Z_2(G)| = p^3p^6/p^3 = p^6.\] But, 
 $|\alpha^{G}| \leq p^3, |\alpha_1^{G}| \leq p^2$ and $|\beta^{G}| = p$. Therefore we have $|Aut_c(G)| \leq p^6$. Hence 
 $|Aut_c(G)| = p^6$. Since $|G/Z(G)| = p^5$, it follows that $|Out_c(G)| = p$.    \hfill $\Box$
 
 \end{proof}

\begin{lemma}
 Let $G$ be the group $\Phi_{30}(1^6)$. 
 Then $|Out_c(G)| = p$.
\end{lemma}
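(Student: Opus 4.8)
The plan is to follow the template established for $\Phi_{24}(1^6)$ in Lemma \ref{lem8}: trap $|Aut_c(G)|$ between a lower bound coming from lemmas \ref{lem2} and \ref{lem3} and an upper bound coming from Lemma \ref{lem6}, and verify that the two bounds coincide. First I would read off from James' presentation \cite[Section 4.6]{RJ} the data needed for $G = \Phi_{30}(1^6)$: its nilpotency class, a minimal generating set, generators of the abelian commutator subgroup $\gamma_2(G)$, and the center $Z(G)$, which I expect to have order $p$ so that $|Inn(G)| = |G/Z(G)| = p^5$. I would also record that $G/\gamma_2(G)$ is elementary abelian, since this is what feeds Lemma \ref{lem2}.

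For the lower bound the crucial claim is that $Z(G) \leq [g, G]$ for every $g \in G - \gamma_2(G)$. I would prove this exactly as in Lemma \ref{lem8}: writing $g = \eta\prod_i \alpha_i^{k_i}$ with $\eta \in \gamma_2(G)$, and, according to which exponent $k_i$ is nonzero modulo $p$, exhibiting a single companion element $h \in G$ for which $[g, h]$ generates $Z(G)$, expanding the commutator via Lemma \ref{lem1} and using that $p$-th powers of the generators lie in $\gamma_2(G)$. Once this is in hand, Lemma \ref{lem3} gives $|Aut_c(G)| \geq |Autcent(G)||G/Z_2(G)|$, and Lemma \ref{lem2} evaluates $|Autcent(G)| = |Hom(G/\gamma_2(G), Z(G))|$; with $G/\gamma_2(G)$ elementary abelian of rank equal to the number of minimal generators and $|Z(G)| = p$, this product should come out to $p^6$.

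For the upper bound I would bound each generator's conjugacy class size, say $|\alpha_i^G| \leq p^{c_i}$, by locating a sufficiently large centralizer (as is done repeatedly in Section 3), and then invoke Lemma \ref{lem6} to conclude $|Aut_c(G)| \leq \prod_i |\alpha_i^G| = p^6$. Combining the two estimates forces $|Aut_c(G)| = p^6$, and hence $|Out_c(G)| = p^6/p^5 = p$, as claimed.

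The main obstacle will be the case analysis establishing $Z(G) \leq [g, G]$ for all $g \notin \gamma_2(G)$: for each pattern of nonvanishing exponents one must produce a commuting partner yielding a nontrivial central commutator, and one must watch for the $p = 3$ corrections to the relevant $p$-th powers, exactly the phenomenon that appeared in the $\Phi_{28}/\Phi_{29}$ computation. A secondary point is confirming that both $|Autcent(G)|$ and the product of the class sizes equal $p^6$ so that the bounds pinch; should the center or the abelianization of $\Phi_{30}(1^6)$ differ from the $\Phi_{24}$ pattern, the intermediate constants would need to be recomputed, but the same squeeze would still deliver $|Out_c(G)| = p$.
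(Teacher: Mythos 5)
Your squeeze cannot work for $\Phi_{30}(1^6)$, and the gap is structural, not a matter of ``recomputing the intermediate constants'' as you hedge at the end. The crucial hypothesis of Lemma \ref{lem3} fails for this group: here $[\alpha_1,\alpha_2]=[\alpha_1,\alpha_3]=1$ and $\alpha_4$ is central, so for $x=\eta\alpha^{k}\alpha_1^{l}\beta^{m}$ the commutator $[\alpha_1,x]$ has $\alpha_2$-exponent $\equiv k \pmod p$, and once $k\equiv 0 \pmod p$ its $\alpha_3$-exponent is $\equiv m \pmod p$; if $k\equiv m\equiv 0\pmod p$ the commutator is trivial. Hence $[g,G]\cap Z(G)=1$ for every $g\in\alpha_1^{l}\gamma_2(G)$ with $l\not\equiv 0\pmod p$, even though such $g$ lie outside $\gamma_2(G)$. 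The numbers confirm the hypothesis must fail: $Z_2(G)=\gen{\alpha_3,\alpha_4}$ has order $p^2$ here (not $p^3$ as for $\Phi_{24}(1^6)$), so if the hypothesis held, lemmas \ref{lem2} and \ref{lem3} would give $|Aut_c(G)|\geq p^3\cdot p^6/p^2=p^7$, i.e.\ $|Out_c(G)|\geq p^2$, contradicting the very statement you are proving. Your upper bound is also wrong: the class sizes are $|\alpha^{G}|\leq p^3$, $|\alpha_1^{G}|\leq p^2$, $|\beta^{G}|\leq p^2$, so Lemma \ref{lem6} yields only $|Aut_c(G)|\leq p^7$, not $p^6$; the two jaws of your squeeze do not pinch.

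The paper's proof replaces both halves by ad hoc arguments. For the lower bound it constructs an explicit central map $\delta$ with $\alpha\mapsto\alpha\alpha_4$, $\alpha_1\mapsto\alpha_1$, $\beta\mapsto\beta\alpha_4$ --- note $\delta$ fixes $\alpha_1$, precisely dodging the bad cosets above --- extends it via Lemma \ref{lemay}, verifies case by case that it is class-preserving (using $[g,\alpha_3^{k_2}]=\alpha_4^{k_1k_2}$ when $k_1\not\equiv 0$, $[g,\alpha_2^{m_2}]=\alpha_4^{-m_1m_2}$ when $k_1\equiv 0$ and $m_1\not\equiv 0$, and $\delta(g)=g$ otherwise), and shows it is non-inner: a central inner automorphism is induced by an element of $Z_2(G)=\gen{\alpha_3,\alpha_4}$, but conjugation by $\alpha_3^{t}$ fixes $\beta$, whereas $\delta$ does not. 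This gives $|Aut_c(G)|\geq p\,|Inn(G)|=p^6$. For the upper bound it rules out $p^7$: if $|Aut_c(G)|=p^7$, the count behind Lemma \ref{lem6} forces every assignment of the generators to conjugates to extend to a class-preserving automorphism, in particular $\sigma:\alpha\mapsto\alpha\alpha_2^{-1}$, $\alpha_1\mapsto\alpha_1$, $\beta\mapsto\beta$; but $[\alpha\alpha_2^{-1},\beta]=\alpha_4^{-1}\neq 1=[\alpha,\beta]$, so $\sigma$ preserves no automorphism-compatible relations, a contradiction. Without importing both of these arguments your template from Lemma \ref{lem8} cannot close either bound.
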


\begin{proof}
 The group $G$ is a  $p$-group of class $4$, minimally generated by $\alpha, \alpha_1$ and $\beta$. The commutator subgroup $\gamma_2(G)$ is abelian and generated by 
 $\alpha_2 := [\alpha_1, \alpha]$, $\alpha_3 := [\alpha_2, \alpha] = [\alpha_1, \beta]$ and $\alpha_4 := [\alpha_3, \alpha] = [\alpha_2, \beta]$. The center $Z(G)$ is of order $p$, generated by
 $\alpha_4$.  It is easy to see that $|\alpha^{G}| \leq p^3, |\alpha_1^{G}| \leq p^2$, 
 and $|\beta^{G}| \leq p^2$. Therefore $|Aut_c(G)| \leq p^7$. Define a map $\delta : \{\alpha, \alpha_1, \beta\} \rightarrow G$ such that
 $\alpha \mapsto \alpha\alpha_4, \alpha_1 \mapsto \alpha_1$ and $\beta \mapsto \beta\alpha_4$. By Lemma \ref{lemay}, the map $\delta$ extends to a central automorphism of $G$. 
 We will show that $\delta$ is also a non-inner class-preserving automorphism.
 Let $g = \eta_1\alpha^{k_1}\alpha_1^{l_1}\beta^{m_1}.$ Then 
 \[[g, \alpha_3^{k_2}] =  [\eta_1\alpha^{k_1}\alpha_1^{l_1}\beta^{m_1}, \alpha_3^{k_2}] = \alpha_4^{k_1k_2}.\] 
 Therefore, if $k_1$ is non-zero modulo $p$, we have  $Z(G) \leq  [g, G].$
 Hence $\delta$ maps $g$ to  a conjugate of $g$. Now let $k_1 \equiv 0\pmod{p}$, then 
 \[[g, \alpha_2^{m_2}] = [\eta_1\alpha_1^{l_1}\beta^{m_1}, \alpha_2^{m_2}] = \alpha_4^{-m_1m_2}.\]
 It follows that, if $m_1$ is non-zero modulo $p$, $\delta$ maps $g$ to a conjugate of $g$. Now let $m_1 \equiv 0\pmod{p}$. But then, $\delta(\eta_1\alpha_1^{l_1}) = \eta_1\alpha_1^{l_1},$ because being a central automorphism 
 $\delta$ fixes $\gamma_2(G)$ element-wise. We have shown that for every $g \in G$,
 $\delta(g)$ is a conjugate of $g$. Therefore $\delta$ is a class-preserving automorphism. Now suppose $\delta$ is an inner automorphism. Then being central, 
 $\delta$ is induced by some element in $Z_2(G) = \gen{\alpha_3, \alpha_4}$. Since $\alpha_4 \in Z(G)$, we can assume that $\delta$ is induced by $\alpha_3^t$. But then $\delta(\beta) = \beta$, 
 a contradiction. Therefore $\delta$ is non-inner class-preserving automorphism. Since $|Inn(G)| = p^5$, it follows that $|Aut_c(G)| \geq p^6.$
 Now define a map $\sigma : \{\alpha, \alpha_1, \beta\} \rightarrow G$ such that
 $\alpha \mapsto \alpha_1^{-1}\alpha\alpha_1 = \alpha\alpha_2^{-1}, \alpha_1 \mapsto \alpha_1$ and $\beta \mapsto \beta$. Suppose $|Aut_c(G)| = p^7$. Then 
 $\sigma$ extends to a class preserving automorphism. But then, $1 = \delta([\alpha, \beta]) = [\alpha\alpha_2^{-1}, \beta] = \alpha_4^{-1}$ which is not possible. Therefore, 
 we have $|Aut_c(G)| = p^6.$ Since $|G/Z(G)| = p^5$, we have $|Out_c(G)| = p$.     \hfill $\Box$

\end{proof}

\begin{lemma}
 Let $G$ be the group $\Phi_{36}(1^6).$   Then $|Out_c(G)| = p$.
\end{lemma}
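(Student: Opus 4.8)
The plan is to pin down $|Aut_c(G)|$ exactly by squeezing it between matching lower and upper bounds, and then to divide by $|Inn(G)| = |G/Z(G)|$. First I would record the presentation of $G = \Phi_{36}(1^6)$: a minimal generating set (I expect two or three generators, as in the neighbouring cases), the abelian commutator subgroup $\gamma_2(G)$ with its generators written as iterated commutators $\alpha_{i+1} = [\alpha_i,\alpha]$, the second centre $Z_2(G)$, and the centre $Z(G)$, which I expect to be cyclic of order $p$ exactly as in the closely related groups $\Phi_{24}(1^6)$ and $\Phi_{30}(1^6)$ treated in Lemma~\ref{lem8} and the lemma preceding this one. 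With the generators and the description of $\gamma_2(G)$ in hand, the upper bound is routine: I would bound each conjugacy-class size $|\alpha_i^G|$ by computing the set $[\alpha_i,G]$, and feed these into Lemma~\ref{lem6} to obtain $|Aut_c(G)| \le p^6$.

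For the lower bound I would follow the strategy of Lemma~\ref{lem8}. The key claim to verify is that $Z(G) \le [g,G]$ for every $g \in G-\gamma_2(G)$. Writing a general such element as $g = \eta\,\alpha^{k_1}\alpha_1^{l_1}\cdots$ with $\eta \in \gamma_2(G)$, I would exhibit, according to which of the leading exponents is a unit modulo $p$, a specific commutator-partner $h$ (a suitable power of one of the $\alpha_i$) for which $[g,h]$ generates $Z(G)$; the case analysis terminates because once the first few exponents vanish, $g$ lies deep in the group and a lower generator does the job. Granting this claim, Lemma~\ref{lem2} gives $|Autcent(G)| = |Hom(G/\gamma_2(G),Z(G))|$, and Lemma~\ref{lem3} yields $|Aut_c(G)| \ge |Autcent(G)|\,|G/Z_2(G)| = p^6$. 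Combining with the upper bound forces $|Aut_c(G)| = p^6$, whence $|Out_c(G)| = p^6/p^5 = p$.

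Should the crude estimate from Lemma~\ref{lem6} overshoot to $p^7$ (as it did for $\Phi_{30}(1^6)$), I would switch to the constructive route. I would build an explicit central automorphism $\delta$ via Lemma~\ref{lemay}, sending one generator to its product with a generator of $\Omega_1(Z(G))$, verify directly that $\delta(g)$ is a conjugate of $g$ for all $g$ using the same case analysis on $[g,G]$, and check that $\delta$ is non-inner by noting that an inner central automorphism must be induced by an element of $Z_2(G)$ and then comparing images on the generators. This gives $|Aut_c(G)| \ge p^6$, after which I would eliminate the value $p^7$ by choosing a candidate map $\sigma$ (shifting a generator by a non-central commutator) and showing that the commutator equation it would have to satisfy has no solution.

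The hard part will be the bookkeeping in the higher-class commutator calculations: correctly expanding $[g,h]$ for a general $g$ in a class-$4$ (or possibly class-$5$) group, tracking the binomial-coefficient contributions coming from $p$-th powers of the $\alpha_i$ (the $p=3$ versus $p>3$ split that appeared earlier for $\Phi_{28}(222)$ and $\Phi_{29}(222)$ is a warning that these corrections can matter), and ensuring the case analysis on the leading exponents of $g$ is genuinely exhaustive. Everything past that is a matter of matching the two bounds and reading off $|Out_c(G)| = p$.
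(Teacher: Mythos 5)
Your proposal matches the paper's proof essentially step for step: the paper also shows $Z(G)\leq[g,G]$ for every $g\in G-\gamma_2(G)$ by a case analysis on the exponents $k_1,l_1$ in $g=\eta_1\alpha^{k_1}\alpha_1^{l_1}$ (with $\alpha_1^p\in\gamma_2(G)$ closing the last case), applies Lemmas \ref{lem2} and \ref{lem3} to get $|Aut_c(G)|\geq p^2\cdot p^6/p^2=p^6$, matches this with the upper bound $|\alpha^G|\,|\alpha_1^G|\leq p^4\cdot p^2=p^6$ from Lemma \ref{lem6}, and divides by $|G/Z(G)|=p^5$. Your contingency plan for an overshooting upper bound is unnecessary here (the two-generator, maximal-class structure makes the Lemma \ref{lem6} bound tight), but anticipating it was reasonable given the neighbouring cases.
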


\begin{proof}
 The group $G$ is a $p$-group of maximal class, minimally generated by $\alpha$ and $\alpha_1$. The commutator subgroup $\gamma_2(G)$ is abelian and generated by $\alpha_{i+1} := [\alpha_i, \alpha]$ for $i = 1, 2 ,3$ and $\alpha_5 := [\alpha_4, \alpha] = [\alpha_1, \alpha_2]$. The center  
 $Z(G)$ is of order $p$, generated by $\alpha_5.$ We will show that for every element $g \in G-\gamma_2(G), Z(G) \leq [g, G]$. Let $g = \eta_1\alpha^{k_1}\alpha_1^{l_1}$.
Then \[[g, \alpha_4^{k_2}] = [\eta_1\alpha^{k_1}\alpha_1^{l_1}, \alpha_4^{k_2}] = \alpha_4^{-k_1k_2}.\]
Therefore, if $k_1$ is non-zero modulo $p$, we have $Z(G) \leq [g, G]$. Let $k_1 \equiv 0\pmod{p}$. Then
\[[g, \alpha_2^{l_2}] = [\eta_1\alpha_1^{l_1}, \alpha_2^{l_2}] = \alpha_5^{l_1l_2}.\]
Therefore, if $l_1$ is non-zero modulo $p$, $Z(G) \leq [g, G]$. Let $l_1 \equiv 0\pmod{p}$, then we have $g \in \gamma_2(G)$ because $\alpha_1^p \in \gamma_2(G)$.
It follows that, for every $g \in G-\gamma_2(G), Z(G) \leq [g, G]$. Hence from lemmas \ref{lem2} and \ref{lem3} we get  
\[|Aut_c(G)| \geq |Autcent(G)||G|/|Z_2(G)| = p^2p^6/p^2 = p^6.\] But it is easy to see that 
 $|\alpha^{G}| \leq p^4$ and $|\alpha_1^{G}| \leq p^2$. Therefore $|Aut_c(G)| \leq p^6$. Hence 
 $|Aut_c(G)| = p^6$. Since $|G/Z(G)| = p^5$, we have  $|Out_c(G)| = p$.    \hfill $\Box$

 \end{proof}

\begin{lemma}
 Let $G$ be the group $\Phi_{38}(1^6)$.  Then $|Out_c(G)| = p$.
\end{lemma}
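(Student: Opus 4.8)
The plan is to follow the two-sided estimate used for $\Phi_{24}(1^6)$ and $\Phi_{36}(1^6)$: bound $|Aut_c(G)|$ from below by the subgroup of central class-preserving automorphisms and from above by the product of the conjugacy class sizes of a minimal generating set, and then check that the two bounds agree at $p^6$. First I would read off the structural data of $G = \Phi_{38}(1^6)$ from James's presentation \cite[Section 4.6]{RJ}: a minimal generating set, the generators and relations of the (abelian) commutator subgroup $\gamma_2(G)$ written as iterated commutators, and the orders of $Z(G)$ and $Z_2(G)$. As with the neighbouring families, I expect $Z(G)$ to have order $p$, so that $|Inn(G)| = |G/Z(G)| = p^5$ and the claim $|Out_c(G)| = p$ is equivalent to $|Aut_c(G)| = p^6$.

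For the lower bound I would show that $Z(G) \leq [g, G]$ for every $g \in G - \gamma_2(G)$. Writing such a $g$ as an element of $\gamma_2(G)$ times a word in the minimal generators, I would split into cases according to which generator-exponent is nonzero modulo $p$, and in each case produce an explicit power of one of the $\alpha_i$ whose commutator with $g$ equals a generator of $Z(G)$, exactly as in the proofs for $\Phi_{24}(1^6)$ and $\Phi_{36}(1^6)$ (using that the $p$-th powers of the generators lie in $\gamma_2(G)$, so that all exponents vanishing modulo $p$ forces $g \in \gamma_2(G)$). Lemmas \ref{lem2} and \ref{lem3} then give $|Aut_c(G)| \geq |Autcent(G)|\,|G/Z_2(G)| = |Hom(G/\gamma_2(G), Z(G))|\,|G/Z_2(G)|$, which I expect to evaluate to $p^6$.

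For the upper bound I would compute the centralizers of the generators to obtain their conjugacy class sizes, and Lemma \ref{lem6} then bounds $|Aut_c(G)|$ by the product of these sizes; the structural data should make this product equal $p^6$ as well. Matching the two estimates forces $|Aut_c(G)| = p^6$, whence $|Out_c(G)| = |Aut_c(G)|/|Inn(G)| = p$.

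The main obstacle will be the explicit commutator bookkeeping in $\gamma_2(G)$: verifying that the particular $p$-th-power and commutator relations defining $\Phi_{38}(1^6)$ still allow one to hit a generator of $Z(G)$ as $[g, \cdot]$ for every $g$ outside $\gamma_2(G)$, and that the centralizer computations yield exactly the class sizes needed for the upper bound. Should these relations obstruct the clean lower bound, I would instead follow the route taken for $\Phi_{30}(1^6)$: construct, via Lemma \ref{lemay}, an explicit non-inner central automorphism sending a generator to itself times a central element in order to secure $|Aut_c(G)| \geq p^6$, and then rule out the value $p^7$ by testing a second candidate map against the class-preserving criterion.
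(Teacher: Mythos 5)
Your lower-bound branch is exactly what the paper does, and it goes through verbatim: writing $g=\eta\alpha^{k_1}\alpha_1^{l_1}\in G-\gamma_2(G)$, one has $[g,\alpha_4^{k_2}]=\alpha_5^{-k_1k_2}$ and $[g,\alpha_3^{l_2}]=\alpha_5^{l_1l_2}$, so $Z(G)=\gen{\alpha_5}\leq [g,G]$ whenever $k_1$ or $l_1$ is nonzero modulo $p$ (and $k_1\equiv l_1\equiv 0 \pmod p$ forces $g\in\gamma_2(G)$ since $\alpha_1^p\in\gamma_2(G)$); Lemmas \ref{lem2} and \ref{lem3} then give $|Aut_c(G)|\geq p^2\cdot p^6/p^2=p^6$. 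The genuine gap is in your upper-bound branch: the expectation that the product of generator class sizes equals $p^6$ is numerically false for $\Phi_{38}(1^6)$. Here $\gamma_2(G)$ is generated by $\alpha_{i+1}=[\alpha_i,\alpha]$ ($i=1,2,3$) and $\alpha_5=[\alpha_4,\alpha]=[\alpha_1,\alpha_3]$, with the extra relation $[\alpha_1,\alpha_2]=\alpha_4\alpha_5^{-1}$; consequently $[\alpha_1,G]$ already contains $\alpha_2$, $\alpha_4\alpha_5^{-1}$ and $\alpha_5$, so $|\alpha_1^G|\leq p^3$ and $|\alpha^G|\leq p^4$, and Lemma \ref{lem6} yields only $|Aut_c(G)|\leq p^7$. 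The sandwich closes for $\Phi_{24}$ and $\Phi_{36}$ but stalls one power of $p$ short here; $\Phi_{38}$ is precisely a family where the clean two-sided estimate does not suffice.

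The missing step is the one you placed in your fallback, but you attached it to the wrong contingency: it is the upper bound, not the lower bound, that needs rescue, and Lemma \ref{lemay} plays no role. The paper argues as follows. Suppose $|Aut_c(G)|=p^7$; since $Aut_c(G)$ embeds into $\alpha^G\times\alpha_1^G$ via $\sigma\mapsto(\sigma(\alpha),\sigma(\alpha_1))$, equality forces every assignment of conjugates to the two generators to extend to a class-preserving automorphism, in particular $\delta(\alpha)=\alpha$, $\delta(\alpha_1)=\alpha_2^{-1}\alpha_1\alpha_2=\alpha_1\alpha_4\alpha_5^{-1}$. Then $\delta(\alpha_2)=[\alpha_1\alpha_4\alpha_5^{-1},\alpha]=\alpha_2\alpha_5$, so class-preservation demands $[\alpha_2,\eta_1\alpha^{k_1}\alpha_1^{l_1}]=\alpha_5$ for some $\eta_1\in\gamma_2(G)$ and $k_1,l_1\in\mathbb{Z}$. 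But a direct computation gives $[\alpha_2,\eta_1\alpha^{k_1}\alpha_1^{l_1}]=\alpha_3^{k_1}\alpha_4^{k_1(k_1-1)/2-l_1}\alpha_5^{e}$ for an integer $e$ depending on $k_1,l_1$, and annihilating the $\alpha_3$- and $\alpha_4$-components forces $k_1\equiv 0$ and then $l_1\equiv 0 \pmod p$, which makes the whole commutator trivial rather than $\alpha_5$ --- a contradiction. Hence $|Aut_c(G)|=p^6$ and, since $|Inn(G)|=p^5$, $|Out_c(G)|=p$. So your proposal contains all the needed ingredients, but as written its main line asserts a false coincidence (class-size product $p^6$), and the candidate-map exclusion of $p^7$ must be promoted from an optional fallback to an essential step.
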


\begin{proof}
The group $G$ is a $p$-group of maximal class, minimally generated by $\alpha$, $\alpha_1$. The commutator subgroup $\gamma_2(G)$ is abelian and generated by $\alpha_{i+1} := [\alpha_i, \alpha]$ for $i = 1, 2 ,3$ and $\alpha_5 := [\alpha_4, \alpha] = [\alpha_1, \alpha_3]$. Also $[\alpha_1, \alpha_2] = \alpha_4\alpha_5^{-1}.$ The center $Z(G)$ is generated by $\alpha_5.$
 We show that for every element $g \in G-\gamma_2(G), Z(G) \leq [g, G]$. Let $g = \eta_1\alpha^{k_1}\alpha_1^{l_1}$.
Then \[[g, \alpha_4^{k_2}] = [\eta_1\alpha^{k_1}\alpha_1^{l_1}, \alpha_4^{k_2}] = \alpha_5^{-k_1k_2}.\]
Therefore, if $k_1$ is non-zero modulo $p$, $Z(G) \leq [g, G]$. Hence, let $k_1 \equiv 0\pmod{p}$. Then
\[[g, \alpha_3^{l_2}] = [\eta_1\alpha_1^{l_1}, \alpha_3^{l_2}] = \alpha_5^{l_1l_2}.\]
Therefore, if $l_1$ is non-zero modulo $p$, we have $Z(G) \leq [g, G]$. If $l_1 \equiv 0\pmod{p}$, then we have $g \in \gamma_2(G)$ because $\alpha_1^p \in \gamma_2(G)$. 
It follows that, for every $g \in G-\gamma_2(G), \ Z(G) \leq [g, G]$. Therefore applying lemmas \ref{lem2} and \ref{lem3} we get 
\[|Aut_c(G)| \geq |Autcent(G)||G|/|Z_2(G)| = p^2p^6/p^2 = p^6.\] It is easy to check that  
 $|\alpha^{G}| \leq p^4$ and $|\alpha_1^{G}| \leq p^3$.  Hence the upper bound for  $|Aut_c(G)|$ is $p^7$. Suppose that $|Aut_c(G)| = p^7$. 
 Then the map $\delta$ defined on the generating set  $\{\alpha, \alpha_1\}$ as $\delta(\alpha) = \alpha$ and $\delta(\alpha_1) = \alpha_2^{-1}\alpha_1\alpha_2 = \alpha_1\alpha_4\alpha_5^{-1}$
 must extend to a class-preserving automorphism automorphism of $G$. Note that \[\delta(\alpha_2) = \delta([\alpha_1, \alpha]) = [\alpha_1\alpha_4\alpha_5^{-1}, \alpha] = \alpha_2\alpha_5.\]
 Since $\delta$ is a class-preserving automorphism, there exists an $\eta_1 \in \gamma_2(G)$ and $k_1, l_1 \in \mathbb{Z}$ such that
 \[[\alpha_2, \eta_1\alpha^{k_1}\alpha_1^{l_1}] = \alpha_5.\]
 We have that
 \[[\alpha_2, \eta_1\alpha^{k_1}\alpha_1^{l_1}] = \alpha_3^{k_1}\alpha_4^{k_1(k_1-1)/2 - l_1}\alpha_5^{l_1 - k_1l_1 + \sum\limits_{n=1}^{k_1-2} n(n-1)/2},\]
 which for no values of $k$ and $k_1$ can be equal to $\alpha_5$. This gives a  contradiction. Hence $|Aut_c(G)| = p^6$.
 Since $|G/Z(G)| = p^5$, we have  $|Out_c(G)| = p$.    \hfill $\Box$

 \end{proof}

\begin{lemma} \label{lem9}
 Let $G$ be the group $\Phi_{39}(1^6)$. 
 Then $|Out_c(G)| = p$.
\end{lemma}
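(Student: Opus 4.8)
The plan is to follow the same template used for the maximal class groups $\Phi_{36}(1^6)$ and $\Phi_{38}(1^6)$, since $\Phi_{39}(1^6)$ is another group of maximal class on two generators $\alpha,\alpha_1$. First I would record the structure of $G$: the commutator subgroup $\gamma_2(G)$ is abelian with generators $\alpha_{i+1}:=[\alpha_i,\alpha]$ for $i=1,2,3$ together with $\alpha_5:=[\alpha_4,\alpha]$, and I would write down the extra relation expressing $[\alpha_1,\alpha_2]$ (and any other bracket among the $\alpha_i$) in terms of the $\alpha_j$'s, exactly as was done for $\Phi_{38}$ where $[\alpha_1,\alpha_2]=\alpha_4\alpha_5^{-1}$. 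The center $Z(G)=\langle\alpha_5\rangle$ has order $p$, and $|Z_2(G)|=p^2$.

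The core of the argument is the lower bound coming from Lemma \ref{lem3}. I would show that for every $g\in G-\gamma_2(G)$ one has $Z(G)\leq[g,G]$. Writing $g=\eta_1\alpha^{k_1}\alpha_1^{l_1}$ with $\eta_1\in\gamma_2(G)$, I would produce an explicit conjugating element hitting $\alpha_5$: commuting $g$ with a suitable power of $\alpha_4$ handles the case $k_1\not\equiv0$, commuting with a power of $\alpha_3$ (or $\alpha_2$) handles the remaining case $k_1\equiv0$, $l_1\not\equiv0$, and when both are $0\bmod p$ one has $g\in\gamma_2(G)$ since $\alpha_1^p\in\gamma_2(G)$. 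With this established, Lemma \ref{lem2} gives $|\Autcent(G)|=|\Hom(G/\gamma_2(G),Z(G))|=p^2$ (as $G/\gamma_2(G)$ is elementary abelian of rank $2$ and $Z(G)\cong C_p$), and Lemma \ref{lem3} yields
\[|\Aut_c(G)|\geq|\Autcent(G)||G|/|Z_2(G)|=p^2\cdot p^6/p^2=p^6.\]

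Next I would establish the matching upper bound using Lemma \ref{lem6}. The class-size estimates $|\alpha^G|\leq p^4$ and $|\alpha_1^G|\leq p^3$ (these follow by bounding the centralizers, just as in $\Phi_{38}$) give $|\Aut_c(G)|\leq p^7$, so I must rule out equality. Assuming $|\Aut_c(G)|=p^7$, the map $\delta$ sending $\alpha\mapsto\alpha$ and $\alpha_1\mapsto\alpha_2^{-1}\alpha_1\alpha_2$ would have to extend to a class-preserving automorphism; I would compute $\delta(\alpha_2)=[\delta(\alpha_1),\alpha]$ explicitly and then show that the required equation $[\alpha_2,\eta_1\alpha^{k_1}\alpha_1^{l_1}]=(\text{the }\alpha_5\text{-part of }\delta(\alpha_2)\alpha_2^{-1})$ has no solution, because expanding $[\alpha_2,\eta_1\alpha^{k_1}\alpha_1^{l_1}]$ produces a fixed nonzero contribution in the $\alpha_3$ or $\alpha_4$ coordinate that cannot be killed while forcing the $\alpha_5$-coordinate to the target value. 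This contradiction forces $|\Aut_c(G)|=p^6$, and since $|G/Z(G)|=|\Inn(G)|=p^5$ we conclude $|\Out_c(G)|=p$.

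The main obstacle I anticipate is the upper-bound step: getting the commutator expansion $[\alpha_2,\eta_1\alpha^{k_1}\alpha_1^{l_1}]$ correct in this maximal-class group, where the nested brackets generate binomial-coefficient exponents (the $\sum n(n-1)/2$ type terms seen in the $\Phi_{38}$ proof) and where the specific relation among the $\alpha_i$ in $\Phi_{39}$ differs from $\Phi_{38}$. One must track these exponents carefully to verify that no choice of $k_1,l_1$ simultaneously forces the lower-order commutator coordinates to vanish and the top coordinate to equal $\alpha_5$. Once that single computation is pinned down, the rest is a routine application of the three lemmas already invoked for the neighbouring families.
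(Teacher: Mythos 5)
Your proposal transfers the $\Phi_{36}/\Phi_{38}$ template, but $\Phi_{39}(1^6)$ has a genuinely different structure and the template breaks at both ends. First, your structural description is wrong: in $\Phi_{39}(1^6)$ the commutator subgroup is \emph{not} abelian and $\alpha_5$ is not $[\alpha_4,\alpha]$; rather $\alpha_5 = [\alpha_2,\alpha_3] = [\alpha_3,\alpha_1] = [\alpha_4,\alpha_1]$, while $[\alpha_4,\alpha]=1$. This kills your core lower-bound step: the hypothesis of Lemma \ref{lem3}, namely $Z(G)\leq [g,G]$ for all $g\in G-\gamma_2(G)$, is \emph{false} here. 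Take $g=\alpha$: since $[\alpha,\alpha_4]=1$, one checks inductively that for $h$ in normal form, $[\alpha,h]$ has $\alpha_2$-leading exponent $-l_2$, then (forcing $l_2\equiv 0$) $\alpha_3$-leading exponent $-r_2$, then $\alpha_4$-leading exponent $-s_2$, and once $l_2\equiv r_2\equiv s_2\equiv 0 \pmod p$ the commutator is trivial; hence $[\alpha,G]\cap Z(G)=1$. So no appeal to Lemmas \ref{lem2} and \ref{lem3} can give $|Aut_c(G)|\geq p^6$, and your argument has no replacement for it. Second, your upper-bound step fails in the opposite direction: the map $\delta\colon \alpha\mapsto\alpha$, $\alpha_1\mapsto\alpha_2^{-1}\alpha_1\alpha_2=\alpha_1\alpha_4$, which you propose to refute, \emph{is} class-preserving in $\Phi_{39}(1^6)$ --- indeed $\delta(\alpha_2)=[\alpha_1\alpha_4,\alpha]=\alpha_2$ since $[\alpha_4,\alpha]=1$ and $[\alpha_2,\alpha_4]\in\gamma_6(G)=1$, so your intended contradiction equation is vacuous. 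This $\delta$ is precisely the non-inner class-preserving automorphism that makes $|Out_c(G)|=p$.

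The paper's actual route is different and you would need its content: the class sizes are $|\alpha^G|\leq p^3$ and $|\alpha_1^G|\leq p^3$ (not $p^4$ and $p^3$; e.g.\ $\gen{\alpha,\alpha_4,\alpha_5}\leq C_G(\alpha)$ and $\alpha_3\alpha_4^{-1}\in C_G(\alpha_1)$), so Lemma \ref{lem6} gives $|Aut_c(G)|\leq p^6$ directly, with no case $p^7$ to exclude. The lower bound then comes from verifying by explicit congruences that the above $\delta$ maps every $\eta_1\alpha^{k_1}\alpha_1^{l_1}$ to a conjugate of itself (solving for exponents $s_2,t_2$ or $r_2,t_2$ in commutators against $\alpha_3^{s_2}\alpha_4^{t_2}$, resp.\ $\alpha_2^{r_2}\alpha_4^{t_2}$), and that $\delta$ is non-inner: it would have to be induced by an element of $Z_3(G)=\gen{\alpha_3,\alpha_4,Z(G)}$, and $\delta(\alpha)=\alpha$ forces the $\alpha_3$-component to vanish, whence $\delta(\alpha_1)\in\alpha_1 Z(G)\neq \alpha_1\alpha_4 Z_1$ modulo the relevant terms --- a contradiction. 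Your proposal contains neither the construction-and-verification of a class-preserving automorphism nor any substitute lower bound, so as it stands it proves nothing about $|Out_c(G)|$ for this family.
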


\begin{proof}
  The group $G$ is a $p$-group of maximal class, minimally generated by $\alpha, \alpha_1$. The commutator subgroup $\gamma_2(G)$ is generated by $\alpha_{i+1} := [\alpha_i, \alpha]$ for $i = 1,2$, $\alpha_4 := [\alpha_3, \alpha] = [\alpha_1, \alpha_2]$, and $\alpha_5 := [\alpha_2, \alpha_3] = [\alpha_3, \alpha_1] = [\alpha_4, \alpha_1]$. Note that any element of $\gamma_2(G)$
 can be written as $\alpha_2^r\alpha_3^s\alpha_4^t\alpha_5^u$ for some $r, s, t, u \in \mathbb{Z}$. The center $Z(G)$ is generated by $\alpha_5$. It is easy to see that
 $|\alpha^{G}| \leq p^3$ and $|\alpha_1^{G}| \leq p^3$. 
  Therefore $|Aut_c(G)| \leq p^6$. Define a map $\delta : \{\alpha, \alpha_1 \} \rightarrow G$ such that
 $\alpha \mapsto \alpha$, and $\alpha_1 \mapsto \alpha_2^{-1}\alpha_1\alpha_2 = \alpha_1\alpha_4$. 
 We will show that $\delta$ extends to a class preserving automorphism of $G$. It is easy to check that $\delta$ preserves all the defining relations of the group. Hence $\delta$ extends to an endomorphism. Note that $\delta
 $ fixes $\gamma_2(G)$ element-wise, therefore for $k_1, l_1 \in \mathbb{Z}$ and $\eta_1 =  \alpha_2^{r_1}\alpha_3^{s_1}\alpha_4^{t_1}\alpha_5^{u_1} 
 \in \gamma_2(G)$, 
 \[\delta(\eta_1\alpha^{k_1}\alpha_1^{l_1}) = \eta_1\alpha^{k_1}\alpha_1^{l_1}\alpha_4^{l_1}\alpha_5^{l_1(l_1-1)/2}.\]
 It is a routine calculation that
 \[[\eta_1\alpha^{k_1}\alpha_1^{l_1}, \alpha_3^{s_2}\alpha_4^{t_2}] = \alpha_5^{r_1s_2 - l_1s_2 - l_1t_2 - l_1k_1s_2}\alpha_4^{-k_1s_2}.\]
 Since $\delta(\eta_1\alpha^{k_1}) = \eta_1\alpha^{k_1}$, which is obviously a conjugate of $\eta_1\alpha^{k_1}$, let $l_1$ be non-zero modulo $p$. 
 Now if $k_1$ is non-zero modulo $p$, then clearly there exist $s_2$ and $t_2$ such that
 \[-k_1s_2 \equiv l_1  \pmod{p}\]
 and
 \[r_1s_2 - l_1s_2 - l_1t_2 - l_1k_1s_2 \equiv l_1(l_1-1)/2 \pmod{p}.\]
 Suppose $k_1 \equiv 0\pmod{p}$, then it can be calculated that
 \[[\eta_1\alpha_1^{l_1}, \alpha_2^{r_2}\alpha_4^{t_2}] = \alpha_5^{-s_1r_2 + r_2l_1(l_1-1)/2 - l_1t_2}\alpha_4^{l_1r_2}.\]
 Since $l_1$ is non-zero modulo $p$, clearly there exist $r_2$ and $t_2$ such that
 \[l_1r_2 \equiv l_1 \pmod{p}\]
 and
 \[-s_1r_2 + r_2l_1(l_1-1)/2 - l_1t_2 \equiv l_1(l_1-1)/2  \pmod{p}.\]
 It follows that $\delta$ maps every element of $G$ to a conjugate of itself. Therefore $\delta$ is a bijection, and hence a class-preserving automorphism of $G$. Now we show that
 $\delta$ is a non-inner automorphism. On the contrary, suppose that $\delta$ is an inner automorphism. Note that $g^{-1}\delta(g) \in Z_2(G)$. Thus it follows that $\delta$ is induced by some element in 
 $Z_3(G) = \gen{\alpha_3, \alpha_4, Z(G)}$, where $Z_3(G)$ denotes the third term in the upper central series of $G$. Let it be induced by $\alpha_3^{s_1}\alpha_4^{t_1}$. Since $\delta(\alpha) = \alpha$, we have
 $\alpha_3^{-s_1}\alpha\alpha_3^{s_1} = \alpha$, which implies that $s_1 \equiv 0\pmod{p}$.  But then $\delta(\alpha_1) = \alpha_4^{-t_1}\alpha_1\alpha_4^{t_1} = \alpha_1\alpha_5^{-t_1}$, 
 a contradiction. Therefore $\delta$ is a non-inner class preserving automorphism. Since $|Inn(G)| = p^5$, we have $|Aut_cG)| = p^6$, and therefore $|Out_c(G)| = p$.    \hfill $\Box$
 
\end{proof}
 \begin{lemma}\label{lem10}
 Let $G$ be the group $\Phi_{13}(1^6)$.  Then $|Out_c(G)| = p^2$.
\end{lemma}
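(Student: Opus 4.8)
The plan is to determine $|Aut_c(G)|$ exactly by trapping it between matching upper and lower bounds, and then to divide out $|Inn(G)| = |G/Z(G)|$, exactly as in the treatments of $\Phi_{24}(1^6)$ and $\Phi_{30}(1^6)$ above. First I would fix the structure of $G = \Phi_{13}(1^6)$ from James's presentation in \cite[Section 4.6]{RJ}: its minimal generating set (presumably three generators, say $\alpha, \alpha_1, \beta$), the generators and relations of the abelian subgroup $\gamma_2(G)$, and the orders of $Z(G)$ and $Z_2(G)$. These commutator relations drive every computation that follows. For the upper bound I would compute $[\alpha_i, G]$ for each generator, read off the class sizes $|\alpha_i^G|$, and apply Lemma \ref{lem6} to get $|Aut_c(G)| \le \prod_i |\alpha_i^G|$.

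For the lower bound I would verify the hypothesis of Lemma \ref{lem3}, namely that $Z(G) \le [g, G]$ for every $g \in G - \gamma_2(G)$. Writing a general element as $g = \eta \alpha^{k} \alpha_1^{l} \beta^{m}$ with $\eta \in \gamma_2(G)$, I would split into cases according to which of $k, l, m$ is nonzero modulo $p$ and, in each case, exhibit an explicit $h \in G$ with $[g, h]$ generating $Z(G)$; since $\alpha_1^p, \beta^p \in \gamma_2(G)$, the remaining case forces $g \in \gamma_2(G)$. Lemma \ref{lem3} then yields $|Aut_c(G)| \ge |Autcent(G)| \cdot |G/Z_2(G)|$, and Lemma \ref{lem2} evaluates $|Autcent(G)| = |Hom(G/\gamma_2(G), Z(G))|$ directly from the elementary abelian structures of $G/\gamma_2(G)$ and $Z(G)$.

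If the two bounds coincide at $p^2\,|Inn(G)|$, the result is immediate; the delicate part, which I expect to be the main obstacle, is closing any residual gap so as to pin the value down as exactly $p^2$ rather than $p$ or $p^3$. To realize the factor $p^2$ I would use Lemma \ref{lemay} to build a family of central automorphisms $\delta$ with $\delta(\alpha_i) = \alpha_i\beta_i$, $\beta_i \in \Omega_1(Z(G))$, then verify which members are class-preserving by solving the defining commutator equations exactly as was done for $\Phi_{11}(1^6)$, and finally identify precisely those $\delta$ induced by $Z_2(G)$ (the inner ones); the surviving cosets should number $p^2$. Symmetrically, should Lemma \ref{lem6} overcount, I would rule out specific candidate maps such as $\alpha \mapsto \alpha c$ for a commutator $c$ by showing the commutator value they demand is unattainable, thereby tightening the upper bound. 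Reconciling these two directions so that they meet precisely at $p^2$ above the inner automorphisms is the crux: unlike the $|Out_c(G)| = p$ cases, where Lemmas \ref{lem2}, \ref{lem3} and \ref{lem6} pinch together at once, here the naive bounds are likely to differ, and the exact value must be extracted by the explicit construction-and-verification of class-preserving central automorphisms together with a careful count of the inner ones among them.
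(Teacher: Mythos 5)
Your contingency plan is, in fact, the paper's actual proof; your first-line strategy, however, provably fails for this particular group, and it is worth seeing exactly why. The group $\Phi_{13}(1^6)$ is a \emph{special} $p$-group minimally generated by \emph{four} elements $\alpha_1,\alpha_2,\alpha_3,\alpha_4$ (not three, and not of higher class), with $\gamma_2(G)=Z(G)=\gen{\beta_1,\beta_2}\cong C_p\times C_p$, where $\beta_1=[\alpha_1,\alpha_2]$ and $\beta_2=[\alpha_1,\alpha_3]=[\alpha_2,\alpha_4]$. The hypothesis of Lemma \ref{lem3} is false here: $[\alpha_3,G]=\gen{\beta_2}$ has order $p$, so $Z(G)\not\leq[\alpha_3,G]$ although $\alpha_3\notin\gamma_2(G)$ (likewise for $\alpha_4$). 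Indeed the hypothesis could not possibly hold: since $G$ has class $2$, $Z_2(G)=G$, so Lemma \ref{lem3} would yield $|Aut_c(G)|\geq |Autcent(G)|=|Hom(C_p^4,C_p^2)|=p^8$ by Lemma \ref{lem2}, contradicting the Lemma \ref{lem6} bound $|Aut_c(G)|\leq |\alpha_1^G||\alpha_2^G||\alpha_3^G||\alpha_4^G|=p^2\cdot p^2\cdot p\cdot p=p^6$. So, unlike the $\Phi_{24}$, $\Phi_{30}$, $\Phi_{36}$, $\Phi_{38}$ cases you model your plan on, there is no pinching of Lemmas \ref{lem2}/\ref{lem3} against Lemma \ref{lem6}; and in fact not every central automorphism is class-preserving (any class-preserving one must send $\alpha_3$ into $\alpha_3\gen{\beta_2}$, killing its $\beta_1$-coordinate).

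What the paper does is precisely your fallback branch. By Lemma \ref{lemay}, each map $\alpha_1\mapsto\alpha_1\beta_1^{r_1}\beta_2^{s_1}$, $\alpha_2\mapsto\alpha_2\beta_1^{r_2}\beta_2^{s_2}$, $\alpha_3\mapsto\alpha_3\beta_2^{s_3}$, $\alpha_4\mapsto\alpha_4\beta_2^{s_4}$ --- the images deliberately constrained to lie in the respective conjugacy classes --- extends to a central automorphism, and the class-preserving condition reduces, exactly as for $\Phi_{11}(1^6)$, to the solvability of two linear congruences modulo $p$ in the exponents $k_2,l_2,m_2,n_2$ of a candidate conjugating element. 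The decisive point, which distinguishes this group from $\Phi_{11}(1^6)$ (where the analogous three congruences forced relations among the parameters and cut $Aut_c$ down to $Inn$), is that here the two congruences are solvable for \emph{every} choice of the six parameters $r_1,s_1,r_2,s_2,s_3,s_4$, so all $p^6$ such maps are class-preserving and $|Aut_c(G)|=p^6$ meets the Lemma \ref{lem6} upper bound on the nose. Your proposed final step of sorting out which of these are induced by $Z_2(G)$ is unnecessary (and note $Z_2(G)=G$ here): one simply divides by $|Inn(G)|=|G/Z(G)|=p^4$ to get $|Out_c(G)|=p^2$, though your coset count would of course give the same answer since $Inn(G)$ sits inside the constructed family. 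In short: your proposal succeeds only through its second branch; as written, leading with Lemma \ref{lem3} sends you down a dead end, and the structural guesses (three generators, nontrivial case split on $p$-th powers, proper $Z_2(G)$) would all need correcting before any computation could begin.
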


\begin{proof}
 The group $G$ is a special $p$-group, minimally generated by $\alpha_1, \alpha_2, \alpha_3$ and $\alpha_4$.
The commutator subgroup $\gamma_2(G)$ is generated by $\beta_1 := [\alpha_1, \alpha_2]$ and $\beta_2 := [\alpha_1, \alpha_3] = [\alpha_2, \alpha_4]$. The conjugates of $\alpha_1, \alpha_2, \alpha_3$ and $\alpha_4$ are , 
 $\alpha_1\beta_1^r\beta_2^s$, $\alpha_2\beta_1^r\beta_2^s, \alpha_3\beta_2^s$ and $\alpha_4\beta_2^s$ respectively where $r$ and $s$ varies over $\mathbb{Z}$. 
 Since the exponent of $\gamma_2(G)$ is $p$, it follows that $|\alpha_1^G| = |\alpha_2^G| = p^2$ and $|\alpha_3^G| = |\alpha_4^G| = p$. 
 Therefore by Lemma \ref{lem6}, $|Aut_c(G)| \leq p^6$.
 
 Define a map $\delta : \{\alpha_1, \alpha_2, \alpha_3, \alpha_4\} \rightarrow G$ such that $\alpha_1 \mapsto \alpha_1\beta_1^{r_1}\beta_2^{s_1}, \alpha_2 \mapsto 
 \alpha_2\beta_1^{r_2}\beta_2^{s_2}$, $\alpha_3 \mapsto  \alpha_3\beta_2^{s_3}$ and  $\alpha_4 \mapsto \alpha_4\beta_2^{s_4}$, for some $r_1, s_1, r_2, s_2, s_3, s_4 \in \mathbb{Z}$. By Lemma \ref{lemay}
this map extends to a central automorphism of $G$. Since $\delta$ fixes $\gamma_2(G)$ element-wise,
 for $k_1, l_1, m_1, n_1 \in \mathbb{Z}$ and $\eta_1 \in  \gamma_2(G)$, 
 \[\delta(\eta_1\alpha_1^{k_1}\alpha_2^{l_1} \alpha_3^{m_1}\alpha_4^{n_1}) = 
 \eta\alpha_1^{k_1}\alpha_2^{l_1} \alpha_3^{m_1}\alpha_4^{n_1}\beta_1^{k_1r_1 + l_1r_2}\beta_2^{k_1s_1 + l_1s_2 + m_1s_3 + n_1s_4}.\]
Therefore $\delta$ extends to a class-preserving automorphism if and only if for every $k_1, l_1, m_1, n_1 \in \mathbb{Z}$, and $\eta_1 \in \gamma_2(G)$, 
 there exist $k_2, l_2, m_2, n_2$ (depending on $k_1, l_1, m_1, n_1$) and $\eta_2 \in \gamma_2(G)$ such that
 \[[\eta_1\alpha_1^{k_1}\alpha_2^{l_1}\alpha_3^{m_1}\alpha_4^{n_1}, \eta_2\alpha_1^{k_2}\alpha_2^{l_2}\alpha_3^{m_2}\alpha_4^{n_2}] = 
   \beta_1^{k_1r_1 + l_1r_2}\beta_2^{k_1s_1 + l_1s_2 + m_1s_3 + n_1s_4}.\]
 Expanding the left hand side, we get
\[\beta_1^{k_1l_2 - k_2l_1}\beta_2^{k_1m_2 - k_2m_1 + l_1n_2 - l_2n_1} = \beta_1^{k_1r_1 + l_1r_2}\beta_2^{k_1s_1 + l_1s_2 + m_1s_3 + n_1s_4}.\]
Comparing the powers of $\beta_i$'s, we see that $\delta$ extends to a class-preserving automorphism if the following equations hold:
\[k_1l_2 - k_2l_1 \equiv k_1r_1 + l_1r_2 \pmod{p},\]
\[k_1m_2 - k_2m_1 + l_1n_2 - l_2n_1 \equiv k_1s_1 + l_1s_2 + m_1s_3 + n_1s_4 \pmod{p}.\]
It is easy to see that for any given $k_1, l_1, m_1, n_1$ there exist $k_2, l_2, m_2, n_2$ such that the above two equations are satisfied. Thus it follows that 
$|Aut_c(G)| = p^6$. Since $|G/Z(G)| = p^4, |Out_c(G)| = p^2$.    \hfill $\Box$

\end{proof}

\begin{lemma}
 Let $G$ be the group $\Phi_{18}(1^6)$.   Then $|Out_c(G)| = p^2$.
\end{lemma}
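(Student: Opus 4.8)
The plan is to follow the same template that governs all the other computations in this section: identify the generating set and commutator structure of $\Phi_{18}(1^6)$, obtain an upper bound on $|\Aut_c(G)|$ via Lemma~\ref{lem6} (the product of conjugacy-class sizes of the minimal generators), and then produce enough explicit class-preserving automorphisms to match the bound. Since the claimed answer is $|\Out_c(G)| = p^2$ and every group of order $p^6$ with $Z(G)$ of order $p^2$ has $|\Inn(G)| = |G/Z(G)| = p^4$, I expect the target to be $|\Aut_c(G)| = p^6$. So the two things to pin down are: an upper bound $|\Aut_c(G)| \le p^6$, and a lower bound $|\Aut_c(G)| \ge p^6$.

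For the upper bound I would write each generator's conjugates as (generator)$\cdot$(element of $\gamma_2(G)$), read off from the commutator relations which central combinations occur, and conclude $|\alpha_i^G| \le p^{d_i}$ with $\sum d_i = 6$; Lemma~\ref{lem6} then gives $|\Aut_c(G)| \le p^6$. For the lower bound I would use exactly the mechanism of Lemma~\ref{lem10}: define a central automorphism $\delta$ sending each generator $\alpha_i \mapsto \alpha_i \beta_i$ with $\beta_i \in \Omega_1(Z(G))$ (legitimate by Lemma~\ref{lemay}, since $G/\gamma_2(G)$ is elementary abelian for these $\Phi_{18}$ groups), compute $\delta(\eta\prod \alpha_i^{k_i})$ using that $\delta$ fixes $\gamma_2(G)$ pointwise, and translate the class-preserving condition into a system of congruences modulo $p$ in the exponents $k_i$ on the left (commutator side) versus the chosen central shift parameters on the right. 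The key step is to show this linear system is solvable for \emph{every} choice of $(k_1,\dots,k_t)$, i.e.\ that the central shifts realizable by $\delta$ all lie in the image of the commutator map; counting the free shift parameters against the constraints should give a $p^6$-dimensional family of class-preserving central automorphisms, and subtracting $|\Inn(G)| = p^4$ yields $|\Out_c(G)| = p^2$.

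The main obstacle I anticipate is the precise bookkeeping in the solvability of the congruence system: unlike the special-group case $\Phi_{13}(1^6)$, the relevant $\Phi_{18}$ groups may have nilpotency class $3$ with a less symmetric commutator structure, so I would need to verify that the map sending the shift vector $(\beta_1,\dots,\beta_t)$ to the set of commutators $[\alpha_i, G]$ is surjective onto the required central targets for all exponent choices, not merely generically. In practice this means checking a handful of linear relations among the $\beta_j$-powers and confirming that no generator's shift forces an inconsistent equation (the kind of obstruction that, in the rigid cases of Section~3, killed the extra automorphisms). Once that solvability is established the count is immediate, so I would front-load the effort into correctly recording the defining relations of $\Phi_{18}(1^6)$ and the resulting commutator expansions, and only then verify that the two governing congruences admit solutions for arbitrary $(k_1,\dots,k_t)$.
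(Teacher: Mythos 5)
Your template is the right one, but two of its load-bearing steps fail for $\Phi_{18}(1^6)$, and both failures are exactly where the paper has to do extra work. First, the upper bound: the conjugacy class sizes here are $|\alpha^{G}| \leq p^3$, $|\alpha_1^{G}| \leq p^2$, $|\beta^{G}| \leq p^2$, so Lemma \ref{lem6} gives only $|Aut_c(G)| \leq p^7$, not $p^6$; your assumption that the exponents sum to $6$ is false. To descend from $p^7$ to $p^6$ one must exhibit a specific candidate map that respects all conjugacy constraints yet is not class-preserving; the paper takes $\sigma: \alpha \mapsto \alpha$, $\alpha_1 \mapsto \alpha_1$, $\beta \mapsto \beta\gamma$ and shows by direct expansion that $[\alpha_1\beta, \ \eta_2\alpha^{k_2}\alpha_1^{l_2}\beta^{m_2}]$ can never equal $\gamma$ (the $\gamma$-exponent $-k_2$ is tied to the $\alpha_2$-exponent $k_2$). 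Without such a certificate your upper bound is simply not established.

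Second, the lower bound: your plan to produce ``a $p^6$-dimensional family of class-preserving central automorphisms'' is impossible. Here $|Autcent(G)| = |Hom(G/\gamma_2(G), Z(G))| = p^6$, and the map $\sigma$ above is itself central but \emph{not} class-preserving, so $Aut_c(G) \cap Autcent(G)$ is strictly smaller than $Autcent(G)$; in fact the solvable shifts form only a $p^4$-family ($\alpha \mapsto \alpha\alpha_3^{r_1}\gamma^{s_1}$, $\alpha_1 \mapsto \alpha_1\alpha_3^{r_2}$, $\beta \mapsto \beta\alpha_3^{r_3}$ --- the $\gamma$-shifts on $\alpha_1$ and $\beta$ are obstructed, as your own ``anticipated obstacle'' paragraph half-suspects). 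A $p^4$-family alone does not beat $|Inn(G)| = p^4$, since it overlaps $Inn(G)$ (e.g.\ conjugation by $\beta$ or $\alpha_2$ lies in it). The paper bridges this gap with a genuinely different device: by Lemma \ref{lempkr} these central class-preserving automorphisms lie in $Z(Aut_c(G))$, so $|Z(Aut_c(G))| \geq p^4$; since $G$ has class $3$, $Inn(G)$ and hence $Aut_c(G)$ is non-abelian, so $Aut_c(G)/Z(Aut_c(G))$ is non-cyclic of order at least $p^2$, forcing $|Aut_c(G)| \geq p^6$. (Also note the paper verifies class-preservation of the $p^4$-family not by your global congruence solvability, which is awkward at class $3$, but by a case analysis showing $g^{-1}\delta(g) \in [g,G]$ for each $g$, using that $[g, Z_2(G)]$ is a subgroup.) As written, your proposal establishes neither $|Aut_c(G)| \leq p^6$ nor $|Aut_c(G)| \geq p^6$.
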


\begin{proof}
 The group $G$ is a  $p$-group of class $3$, minimally generated by $\alpha, \alpha_1, \beta$. 
 The commutator subgroup $\gamma_2(G)$ is abelian and generated by $\alpha_2:= [\alpha_1, \alpha], \alpha_3 := [\alpha_2, \alpha] = [\alpha_1, \beta]$ and $\gamma := [\alpha, \beta]$. The center $Z(G)$ is of order $p^2$, generated by $\alpha_3$ and $\gamma$.
 Note that $|\alpha^{G}| \leq p^3, |\alpha_1^{G}| \leq p^2$ and $|\beta^{G}| \leq p^2$. It follows from Lemma \ref{lem6} that 
 $|Aut_c(G)| \leq p^7$.
 Now define a map $\delta : \{\alpha, \alpha_1, \beta \} \rightarrow G$ such that $\alpha \mapsto \alpha\alpha_3^{r_1}\gamma^{s_1}, \alpha_1 \mapsto 
 \alpha_1\alpha_3^{r_2}$ and $\beta \mapsto  \beta\alpha_3^{r_3}$ for some $r_1, s_1, r_2, r_3 \in \mathbb{Z}$. 
 By Lemma \ref{lemay} this map extends to a central automorphism of $G$.
 Let $g = \eta_1\alpha^{k_1}\alpha_1^{l_1}\beta^{m_1}$, where $\eta_1 \in \gamma_2(G)$ and
 $k_1, l_1, m_1 \in \mathbb{Z}$. Let $k_1 \equiv 0\pmod{p}$.  Then note that $\delta(g) = g\alpha_3^r$ for some $r \in \mathbb{Z}$ and 
 \[[g, \beta^{l_2}] = [\eta_1\alpha_1^{l_1}\beta^{m_1}, \beta^{l_2}] = \alpha_3^{l_1l_2}.\]
 Therefore if $l_1$ is non-zero modulo $p$, we have $\gen{\alpha_3} \leq [g, G]$. Let $l_1 \equiv 0\pmod{p}$. Note that $\alpha_1^p \in Z(G)$, hence 
 \[[g, \alpha_1^{m_2}] = [\eta_1\beta^{m_1}, \alpha_1^{m_2}] = \alpha_3^{-m_1m_2}.\]
 Therefore if $m_1$ is non-zero modulo $p$, we have $\gen{\alpha_3} \leq [g, G]$. Thus we have shown that, if $k_1 \equiv 0\pmod{p}$, then  
 $g^{-1}\delta(g) \in [g, G].$ It follows that, if $k_1 \equiv 0\pmod{p}$,  $\delta$ maps $g$ to a conjugate of $g$. Now suppose $k_1$ is non-zero modulo $p$.  Then
 \[[g, \alpha_2^{k_2}] = [\eta_1\alpha^{k_1}\alpha_1^{l_1}\beta^{m_1}, \alpha_2^{k_2}] = \alpha_3^{-k_1k_2},\] 
 so that $\gen{\alpha_3} \leq [g, Z_2(G)]$. Also, we have
 \[[g, \beta^{n_2}] = [\eta_1\alpha^{k_1}\alpha_1^{l_1}\beta^{m_1}, \beta^{n_2}] = \alpha_3^{l_1n_2}\gamma^{k_1n_2}.\]
 Since $\beta \in Z_2(G)$ and $[g, Z_2(G)]$ is a subgroup, it follows that $Z(G) \leq [g, G]$. Because 
 $\delta$ is a central automorphism, we have $g^{-1}\delta(g) \in [g, G]$. We have shown that, for every $g \in G$,  
 $g^{-1}\delta(g) \in [g, G].$ 
 It follows that $\delta$ is a class-preserving automorphism. 
 Since $r_1, s_1, r_2, r_3$ were arbitrary, we have that $|Aut_c(G) \cap Autcent(G)| \geq p^4$.
 Applying Lemma \ref{lempkr} we get $|Z(Aut_c(G))| \geq p^4$. Note that $Aut_c(G)$ is non-abelian because $G$ is a group of  class 3. 
 Therefore $|Aut_c(G)| \geq p^6$. Now suppose that $|Aut_c(G)| = p^7.$ Then the map $\sigma$ defined on the generating set $\{\alpha, \alpha_1, \beta\}$ 
 as $\alpha \mapsto \alpha, \alpha_1 \mapsto \alpha_1$ and $\beta \mapsto  \alpha\beta\alpha^{-1} = \beta\gamma$ extends to a class-preserving automorphism. 
 Hence there exist  $\eta_2 \in \gamma_2(G)$ and $k_2, l_2, m_2 \in \mathbb{Z}$ such that 
 $[\alpha_1\beta, \ \eta_2\alpha^{k_2}\alpha_1^{l_2}\beta^{m_2}] = \gamma$, but by a routine calculation it can be checked that
 \[[\alpha_1\beta, \ \eta_2\alpha^{k_2}\alpha_1^{l_2}\beta^{m_2}] = \alpha_2^{k_2}\beta\alpha_3^{m_2 - l_2 + k_2(k_2 - 1)/2}\gamma^{-k_2},\]
which can not be equal to $\gamma$ for any values of $k_2, l_2$ and $m_2$. Therefore we get a contradiction and it follows that  $|Aut_c(G)| = p^6$. Hence $|Out_c(G)| = p^2$ as $|Inn(G)| = p^4$.  \hfill $\Box$

 \end{proof}
 
\begin{lemma} \label{lem11}
 Let $G$ be the group $\Phi_{20}(1^6)$.   Then $|Out_c(G)| = p^2$.
\end{lemma}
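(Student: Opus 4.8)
The plan is to mirror the argument already carried out for $\Phi_{18}(1^6)$, since $G = \Phi_{20}(1^6)$ is again a group of class $3$ with $|Z(G)| = p^2$ and the target value $|Out_c(G)| = p^2$ is identical. First I would record the structural data of $G$: a minimal generating set, the generators of the (abelian) commutator subgroup $\gamma_2(G)$ together with the class-$3$ relations, and the description of $Z(G)$ as a group of order $p^2$; in particular $|Inn(G)| = |G/Z(G)| = p^4$. Reading the conjugacy-class bounds $|\alpha_i^G|$ off the presentation and feeding them into Lemma \ref{lem6} then gives an upper bound $|Aut_c(G)| \leq p^7$.

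For the lower bound I would construct, via Lemma \ref{lemay}, a family of central automorphisms $\delta$ sending each generator $x$ to $x\zeta_x$ with $\zeta_x \in \Omega_1(Z(G))$, depending on four free parameters in $\mathbb{Z}/p$. The crux is to verify that every such $\delta$ is class-preserving, i.e. $g^{-1}\delta(g) \in [g,G]$ for all $g \in G$. Writing $g = \eta_1\prod_i \alpha_i^{k_i}$ with $\eta_1 \in \gamma_2(G)$, this is settled by a case analysis on which exponents $k_i$ are nonzero modulo $p$: in each case one exhibits an explicit commutator $[g,z]$, with $z$ a suitable power of a generator of $\gamma_2(G)$ or of an element of $Z_2(G)$, realizing the prescribed central correction. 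This produces $|Aut_c(G) \cap Autcent(G)| \geq p^4$, so Lemma \ref{lempkr} gives $|Z(Aut_c(G))| \geq p^4$; and since $G$ has class $3$, $Aut_c(G)$ is non-abelian, forcing $|Aut_c(G)| \geq p^6$.

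It remains to exclude $|Aut_c(G)| = p^7$. Assuming equality, a specific non-central map $\sigma$ (sending one generator to a chosen conjugate of itself and fixing the rest) would have to extend to a class-preserving automorphism, so that some commutator $[w, \eta_2\prod_i \alpha_i^{l_i}]$ equals the central element prescribed by $\sigma$. Expanding this commutator using the class-$3$ relations and Lemma \ref{lem1} and comparing coefficients in $\gamma_2(G)$ would show that the prescribed element can never be hit, a contradiction exactly as for $\Phi_{18}(1^6)$. Hence $|Aut_c(G)| = p^6$, and with $|Inn(G)| = p^4$ we conclude $|Out_c(G)| = p^2$.

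The main obstacle is the case-by-case verification that the constructed central automorphisms are class-preserving: the commutator computations must be arranged so that, for each admissible pattern of the exponents $k_i \bmod p$, a single commutator already lands the required correction in $Z(G)$. The concluding contradiction step is one delicate commutator expansion, where the binomial-type coefficients coming from the class-$3$ relations must be shown to be incompatible with the target value.
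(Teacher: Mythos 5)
Your lower-bound half coincides with the paper's argument: the paper constructs exactly the four-parameter central family $\alpha \mapsto \alpha\beta_2^{t_1}$, $\alpha_1 \mapsto \alpha_1\beta_1^{s_2}\beta_2^{t_2}$, $\alpha_2 \mapsto \alpha_2\beta_2^{t_3}$ via Lemma \ref{lemay}, verifies it is class-preserving by the same case analysis on the exponents you describe, and then uses Lemma \ref{lempkr} together with the non-abelianness of $Aut_c(G)$ (as $G$ has class $3$) to get $|Aut_c(G)| \geq p^6$. That part of your plan is sound.

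The gap is in your upper-bound half, and it is not merely a matter of sharpness. In $\Phi_{20}(1^6)$ the generator $\alpha$ satisfies $[\alpha, \alpha_1] = \beta_2 \in Z(G)$ and commutes with $\alpha_2$ and with $\gamma_2(G)$, so $|\alpha^G| = p$ (not $p^2$ or $p^3$ as your bound of $p^7$ implicitly assumes); together with $|\alpha_1^G| \leq p^3$ and $|\alpha_2^G| \leq p^2$, Lemma \ref{lem6} already yields $|Aut_c(G)| \leq p^6$, and the paper's proof ends there with no exclusion step. More seriously, the exclusion step you propose to import from $\Phi_{18}(1^6)$ \emph{cannot} be carried out here: since $|Aut_c(G)| = p^6 = |\alpha^G|\,|\alpha_1^G|\,|\alpha_2^G|$, equality holds in Lemma \ref{lem6}, so the injection of $Aut_c(G)$ into $\alpha^G \times \alpha_1^G \times \alpha_2^G$ is a bijection, and hence \emph{every} map sending each generator to an arbitrarily chosen conjugate --- in particular any $\sigma$ fixing two generators and moving the third to a conjugate --- does extend to a class-preserving automorphism. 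For instance, $\alpha_1 \mapsto \alpha_1\beta$ with $\alpha$ and $\alpha_2$ fixed is simply conjugation by $\alpha_2$. So the commutator expansion you plan would never produce an element that ``can never be hit,'' and searching for the $\Phi_{18}$-style contradiction would fail; this is a structural difference between the two groups ($\alpha \in Z_2(G)$ in $\Phi_{20}(1^6)$, whereas in $\Phi_{18}(1^6)$ the product of class sizes is genuinely $p^7$ and the contradiction step is unavoidable). The repair is simply to compute $|\alpha^G|$ exactly, after which your third step is unnecessary.
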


\begin{proof}
The group $G$ is a  $p$-group of class $3$, minimally generated by $\alpha, \alpha_1, \alpha_2$. 
 The commutator subgroup $\gamma_2(G)$ is abelian and generated by $\beta:= [\alpha_1, \alpha_2], \beta_1 := [\beta, \alpha_1]$ and $\beta_2 := [\beta, \alpha_2] = [\alpha, \alpha_1]$. The center $Z(G)$ is of order $p^2$, generated by $\beta_1$ and $\beta_2$. 
 Note that $|\alpha^{G}| = p, |\alpha_1^{G}| \leq p^3$ and $|\alpha_2^{G}| \leq p^2$. It follows from Lemma \ref{lem6} that 
 $|Aut_c(G)| \leq p^6$.
 Now define a map $\delta : \{\alpha, \alpha_1, \alpha_2\} \rightarrow G$ such that $\alpha \mapsto \alpha\beta_2^{t_1}, \alpha_1 \mapsto 
 \alpha_1\beta_1^{s_2}\beta_2^{t_2}$ and $\alpha_2 \mapsto  \alpha_2\beta_2^{t_3}$ for some $t_1, s_2, t_2, t_3 \in \mathbb{Z}$. 
 By Lemma \ref{lemay} this map extends to a central automorphism of $G$. 
 Let $g = \eta_1\alpha^{k_1}\alpha_1^{l_1}\alpha_2^{m_1}$, where $\eta_1 = \beta^{u_1}\beta_1^{v_1}\beta_2^{w_1}$ and
 $k_1, l_1, m_1, u_1, v_1, w_1 \in \mathbb{Z}$. Note that, if $l_1 \equiv 0\pmod{p}$, then  $\delta(g) = g\beta_2^r$ for some $r \in \mathbb{Z}$. 
 Consider \[[g, \beta^{m_2}] = [\eta_1\alpha^{k_1}\alpha_2^{m_1}, \beta^{m_2}] = \beta_2^{-m_1m_2}.\]
 Therefore if $m_1$ is non-zero modulo $p$, we have $\gen{\beta_2} \leq [g, G]$. Let $m_1 \equiv 0\pmod{p}$. Note that $\alpha_2^p \in Z(G)$, hence 
 \[[g, \alpha_2^{u_2}] = [\eta_1\alpha^{k_1}, \alpha_2^{u_2}] = [\beta^{u_1}\beta_1^{v_1}\beta_2^{w_1}\alpha^{k_1}, \ \alpha_2^{u_2}] =   \beta_2^{u_1u_2}.\]
 Therefore if $u_1$ is non-zero modulo $p$, we have $\gen{\beta_2} \leq [g, G]$. Let $u_1 \equiv 0\pmod{p}$. Then
 \[[g, \alpha_1^{k_2}] = [\alpha^{k_1}, \alpha_1^{k_2}] = \beta_2^{k_1k_2},\]
 so that if $k_1$ is non-zero modulo $p$, then $\gen{\beta} \leq [g, G]$. Thus we have shown that if $l_1 \equiv 0\pmod{p}$,
 then $g^{-1}\delta(g) \in [g, G].$ It follows that if $l_1 \equiv 0\pmod{p}$, then $\delta$ maps $g$ to a conjugate of $g$. Now suppose that $l_1$ is non-zero modulo $p$. Then
 \[[g, \alpha^{l_2}] = [\eta_1\alpha^{k_1}\alpha_1^{l_1}\alpha_2^{m_1}, \alpha^{l_2}] = \beta_2^{-l_1l_2},\] 
 so that $\gen{\beta_2} \leq [g, Z_2(G)]$. Also we have
 \[[g, \beta^{n_2}] = [\eta_1\alpha^{k_1}\alpha_1^{l_1}\alpha_2^{m_1}, \beta^{n_2}] = \beta_1^{-l_1n_2}\beta_2^{-m_1n_2}.\]
 Since $\beta \in Z_2(G)$ and $[g, Z_2(G)]$ is a subgroup, it follows that $Z(G) \leq [g, G]$. Because 
 $\delta$ is a central automorphism,  we have $g^{-1}\delta(g) \in [g, G]$. It follows that $\delta$ is a class-preserving automorphism of $G$. 
 Since $t_1, s_2, t_2, t_3$ were arbitrary, we have that $|Aut_c(G) \cap Autcent(G)| \geq p^4$.
 Applying Lemma \ref{lempkr} we get $|Z(Aut_c(G))| \geq p^4$. But $Aut_c(G)$ is non-abelian since $G$ is of class 3.
 Therefore $|Aut_c(G)| \geq p^6$. Hence $|Aut_c(G)| = p^6.$ Since $|G/Z(G)| = p^4$, we have $|Out_c(G)| = p^2$.    \hfill $\Box$

\end{proof}

Now we are ready to prove the following result, which together with Theorem \ref{prop1} proves  Theorem A. 

\begin{thm} \label{prop2}
 Let $G$ be a group of order $p^6$. Then the following hold true.
 \begin{enumerate}
 \item  If $G$ belongs to any of the isoclinism families $\Phi_k$ for $k = 7, 10, 24, 30, 36, 38, 39$, then $|Out_c(G)| = p$.  
 \item  If $G$ belongs to any of the isoclinism families $\Phi_k$ for $k = 13, 18, 20$, then $|Out_c(G)| = p^2$.
 \item  If $G$ belongs to any of the isoclinism families $\Phi_k$ for $k = 15, 21$, then $|Out_c(G)| = p^4$.
 \end{enumerate}
\end{thm}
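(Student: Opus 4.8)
The plan rests on the remark in the introduction that $|Aut_c(G)|$ is an invariant of the isoclinism family of a non-abelian $G$ \cite[Theorem 4.1]{MKY}; since $|Inn(G)| = |G/Z(G)|$ is an isoclinism invariant as well, so is $|Out_c(G)| = |Aut_c(G)|/|Inn(G)|$. Thus it suffices to fix one representative from each family occurring in the three parts and compute $|Out_c(G)|$ there. The families $\Phi_{24}, \Phi_{30}, \Phi_{36}, \Phi_{38}, \Phi_{39}$ of part (1) and the families $\Phi_{13}, \Phi_{18}, \Phi_{20}$ of part (2) have already been settled in Lemmas \ref{lem8}--\ref{lem11}, which give $|Out_c(G)| = p$ and $|Out_c(G)| = p^2$ respectively. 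So the only new computations needed are for the representatives $\Phi_7(1^6), \Phi_{10}(1^6)$ (completing part (1)) and $\Phi_{15}(1^6), \Phi_{21}(1^6)$ (all of part (3)); the theorem will then follow by collecting these with the cited lemmas.

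For $\Phi_7(1^6)$ and $\Phi_{10}(1^6)$ I would run the template of Lemma \ref{lem8}. After reading off a minimal generating set together with generators of $\gamma_2(G)$ and $Z(G)$ from James's presentation, I compute each conjugacy-class size $|\alpha_i^G|$ and feed them into Lemma \ref{lem6} to bound $|Aut_c(G)|$ from above. For the matching lower bound I verify the hypothesis $Z(G) \le [g, G]$ for every $g \in G - \gamma_2(G)$, splitting into cases according to which generator-exponent of $g$ is nonzero modulo $p$ and, in each case, producing an element whose commutator with $g$ generates $Z(G)$; Lemmas \ref{lem2} and \ref{lem3} then give $|Aut_c(G)| \ge |Autcent(G)|\,|G/Z_2(G)|$, which I expect to meet the upper bound and force $|Out_c(G)| = p$. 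Should the condition $Z(G) \le [g,G]$ fail on some coset for one of these families, I would instead exhibit a single non-inner central class-preserving automorphism through Lemma \ref{lemay} and rule out anything larger by a direct commutator computation, exactly as was done for $\Phi_{30}$ and $\Phi_{38}$.

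The genuinely delicate case is part (3), the families $\Phi_{15}$ and $\Phi_{21}$, where $|Out_c(G)| = p^4$ is largest; these contain Burnside's group $W$, which is of nilpotency class $2$. For a class-$2$ group every class-preserving automorphism fixes $G/Z(G)$ pointwise, so $Aut_c(G) \le Autcent(G)$, and since the representatives are purely non-abelian, $|Autcent(G)| = |Hom(G/\gamma_2(G), Z(G))|$ by Lemma \ref{lem2}. By Lemma \ref{lemay} each assignment $\alpha_i \mapsto \alpha_i\beta_i$ with $\beta_i \in \Omega_1(Z(G))$ defines a central automorphism $\delta$, and $\delta$ is class-preserving exactly when $g^{-1}\delta(g) \in [g, G]$ for all $g$. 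When the representative is special with $Z(G) = \gamma_2(G)$ of order $p^2$ and $Z(G) \le [g, G]$ for every $g \in G - \gamma_2(G)$, this holds for all central automorphisms at once, so $Aut_c(G) = Autcent(G)$ of order $|Hom((C_p)^4, (C_p)^2)| = p^8$; since $|Inn(G)| = |G/Z(G)| = p^4$, this yields $|Out_c(G)| = p^4$. The main obstacle is to establish this cleanly for both $\Phi_{15}$ and $\Phi_{21}$: where the uniform condition $Z(G) \le [g, G]$ does not cover every coset, I would expand $g^{-1}\delta(g) = [g, \eta\prod_j \alpha_j^{l_j}]$ in a basis of $\gamma_2(G)$, reducing class-preservation to a system of linear congruences modulo $p$ in the conjugating exponents whose coefficients depend on the chosen $\beta_i$, and count precisely which tuples $(\beta_i)$ make the system solvable for every $g$. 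This counting, matched against the upper bound from Lemma \ref{lem6}, is the computational heart; the rest is assembling these four computations with Lemmas \ref{lem8}--\ref{lem11} to complete the proof.
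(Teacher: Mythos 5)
Your skeleton matches the paper's: reduce to one representative per family via isoclinism invariance of $Aut_c$, and quote Lemmas \ref{lem8}--\ref{lem11} for the families $24, 30, 36, 38, 39$ and $13, 18, 20$. But two of your remaining computations diverge in ways worth flagging. For $\Phi_7(1^6)$ and $\Phi_{10}(1^6)$ your first-line plan is guaranteed to fail: these representatives decompose as $\Phi_7(1^5) \times C_p$ and $\Phi_{10}(1^5) \times C_p$, so $Z(G)$ contains a central $C_p$ direct factor lying outside $\gamma_2(G)$, while $[g, G] \leq \gamma_2(G)$ always; hence the hypothesis $Z(G) \leq [g, G]$ of Lemma \ref{lem3} fails for \emph{every} $g$, and the Lemma \ref{lem8} template cannot even start. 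Your stated fallback (one explicit non-inner central class-preserving automorphism via Lemma \ref{lemay}, plus an upper bound) can be made to work, but the paper's route is far shorter: $Aut_c(H \times K) \cong Aut_c(H) \times Aut_c(K)$, so everything reduces to the $p^5$ factors, for which \cite[Lemmas 5.1 and 5.2]{MKY} give $|Out_c| = p$ directly.

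The genuine gap is at $\Phi_{21}$. This family has nilpotency class $3$, not $2$: the paper itself records that the class-$2$ families of order $p^6$ are exactly $\Phi_{11}, \ldots, \Phi_{15}$, so only $\Phi_{15}$ can contain Burnside's group $W$, and your blanket inclusion $Aut_c(G) \leq Autcent(G)$ simply does not hold for $\Phi_{21}(1^6)$ (compare how the paper exploits the \emph{non}-abelianness of $Aut_c(G)$ for the class-$3$ groups $\Phi_{18}(1^6)$ and $\Phi_{20}(1^6)$). Consequently your proposed count of central class-preserving automorphisms only bounds $|Aut_c(G) \cap Autcent(G)|$ from below and cannot by itself yield $|Out_c(G)| = p^4$; you would need to account for non-central class-preserving automorphisms or produce an independent matching upper bound, and Lemma \ref{lem6} with three generators and class sizes at most $p^3$ does not obviously close that gap. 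The paper sidesteps the whole computation by citing \cite[Proposition 5.8]{MKYLMS}. For $\Phi_{15}$ your direct argument is sound in outline ($Aut_c \leq Autcent$ in class $2$, $|Autcent| = p^8$ by Lemma \ref{lem2}, equality once $Z(G) \leq [g, G]$ is checked on $G - \gamma_2(G)$), but note the paper avoids touching a presentation of $\Phi_{15}(1^6)$ altogether: among the five class-$2$ families, $\Phi_{11}$--$\Phi_{14}$ were already shown to have $|Aut_c| \neq p^8$, so Burnside's $W$ with $|Aut_c(W)| = p^8$ must be isoclinic to $\Phi_{15}$, and isoclinism invariance finishes the case --- an elimination argument that trades your ``computational heart'' for results already in hand.
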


\begin{proof}
 Let $G$ be either the group $\Phi_{7}(1^6)$ or the group $\Phi_{10}(1^6)$. Then, since 
$\Phi_{7}(1^6) = \Phi_{7}(1^5) \times C_p$ and $\Phi_{10}(1^6) = \Phi_{10}(1^5) \times C_p$, it follows from \cite[Lemma 5.1 and Lemma 5.2]{MKY} that  $|Out_c(G)| = p$.
With this observation in hands, (i) follows from lemmas \ref{lem8}-\ref{lem9}. It is readily seen that (ii) follows from lemmas \ref{lem10}-\ref{lem11}. Now
let $G$ be the group $\Phi_{15}(1^6)$. We observe that
James' list of groups of order $p^6$ and class 2 consists of exactly 5 isoclinism families $\Phi_{k}$ for $k =11, \ldots, 15$. As we have seen above  that
$|Aut_c(H)| \neq p^8$ for $H \in \{\Phi_k \mid 11\le k \le 14\}$. But, as shown by Burnside \cite{WB}, there exists a group $W$ of order $p^6$ and of nilpotency class 2 such that $|Aut_c(W)| = p^8$. 
Therefore $G$ must be isoclinic to the group $W$ and $|Aut_c(G)| = p^8$. Since $|G/Z(G)| = p^4$, we have $|Out_c(G)| = p^4$. 
Next, it follows from \cite[Proposition 5.8]{MKYLMS}, that $|Out_c(\Phi_{21}(1^6))| = p^4$. This completes the proof of the theorem.  \hfill $\Box$

\end{proof}

\noindent{\bf Acknowledgements.}  We thank the referee for his/her useful comments and suggestions which make the paper more readable.

\end{document}